\definecolor{hellgrau}{gray}{.9}
\makeatletter \theoremstyle{plain}
\newtheorem{theorem}{Theorem}[section]
\newtheorem{lemma}[theorem]{Lemma}
\newtheorem{corollary}[theorem]{Corollary}
\newtheorem{proposition}[theorem]{Proposition}
\theoremstyle{remark}
\newtheorem{remark}[theorem]{Remark}
\newtheorem{question}[theorem]{Question}
\numberwithin{equation}{section}
\theoremstyle{definition}
\newtheorem{definition}[theorem]{Definition}
\newtheorem{example}[theorem]{Example}
\newcommand{\ep}{\epsilon}
\newcommand{\del}{\delta}
\newcommand{\reals}{\mathbb{R}}
\newcommand{\cH}{{\mathcal{H}}}
\newcommand{\cK}{{\mathcal{K}}}
\newcommand{\cP}{{\mathcal{P}}}
\newcommand{\boldu}{{\mathbf{u}}}
\newcommand{\boldv}{{\mathbf{v}}}
\newcommand{\ovl}{\overline}
\newcommand{\he}{\textnormal{H}}
\def\divh{\operatorname{div_{\textnormal{H}}}}
\def\trace{\operatorname{tr}}
\newcommand{\CoSi}{\nabla^{\Sigma}}
\newcommand{\cu}{\dot{\gamma}}
\newcommand{\cuo}{\dot{\gamma}_{1}}
\newcommand{\cut}{\dot{\gamma}_{2}}
\newcommand{\cf}{\omega(\dot{\gamma})}
\newcommand{\grad}{\nabla}
\newcommand{\hgrad}{\nabla_{\textnormal{H}}}
\newcommand{\hlapl}{\Delta_{\textnormal{H}}}
\newcommand{\eps}{\varepsilon}
\newcommand{\leb}{\mathcal{L}}
\newcommand{\bound}{\partial \Omega}
\newcommand{\heis}{\mathbb{H}}
\newcommand{\vef}{\mathcal{X}(M)}
\newcommand{\scal}[2]{\langle {#1} , {#2}\rangle}
\newcommand{\Mn}{\mathcal{N}}
\newcommand{\XtL}{X_{3}^{L}}
\begin{document}

\title{Sub-Riemannian curvature and a Gauss--Bonnet theorem in the Heisenberg group}
\author{Zolt\'an Balogh}
\address{Mathematisches Institut, Universit\"at Bern, Sidlerstrasse 5, 3012 Bern, Switzerland}
\email{zoltan.balogh@math.unibe.ch}

\author{Jeremy T. Tyson}
\address{Department of Mathematics \\ University of Illinois \\ 1409 West Green St. \\ Urbana, IL, 61801}
\email{tyson@illinois.edu}

\author{Eugenio Vecchi}
\address{Dipartimento di Matematica, Universit\`{a}  di Bologna, Piazza di Porta San Donato 5, 40126 Bologna, Italy}
\email{eugenio.vecchi2@unibo.it}

\date{\today}
\thanks{ZMB and EV were supported by the Swiss National Science Foundation Grant No.\ 200020-146477, and have also received funding from the People Programme (Marie Curie Actions) of the European Union's Seventh
Framework Programme FP7/2007-2013/ under REA grant agreement No.\ 607643 (ERC Grant MaNET `Metric Analysis for Emergent Technologies').
JTT acknowledges support from U.S. National Science Foundation Grant DMS-0120870 and Simons Foundation Collaboration Grant 353627.}
\keywords{Heisenberg group, sub-Riemannian geometry, Riemannian approximation, Gauss--Bonnet theorem, Steiner formula}
\subjclass[2010]{Primary 53C17; Secondary 53A35, 52A39}

\maketitle
 \begin{abstract}
We use a Riemannnian approximation scheme to define a notion of \textit{sub-Riemannian Gaussian curvature}
for a Euclidean $C^{2}$-smooth surface in the Heisenberg group $\heis$
away from characteristic points, and a notion of \textit{sub-Riemannian signed geodesic curvature}
for Euclidean $C^{2}$-smooth curves on surfaces. These results
are then used to prove a Heisenberg version of the Gauss--Bonnet theorem. An application to Steiner's formula for the Carnot-Carath\'eodory distance in $\heis$ is provided.
\end{abstract}

\tableofcontents

\section{Introduction}
A full understanding of the notion of curvature has been at the
core of studies in differential geometry
since the foundational works
of Gauss and Riemann.
The aim of this paper is to propose a suitable candidate for the
notion of \textit{sub-Riemannian Gaussian curvature} for Euclidean
$C^{2}$-smooth surfaces in the first Heisenberg group $\heis$,
adopting the so called \textit{Riemannian approximation scheme},
which has proved to be a very powerful tool to address sub-Riemannian issues.

Referencing the seminal work of Gauss, 
we recall that to a compact and oriented
Euclidean $C^{2}$-smooth regular surface
$\Sigma \subset \reals^{3}$ we can attach the notions of
\textit{mean curvature} and \textit{Gaussian curvature} as
symmetric polynomials of the second fundamental form.
To be more precise, for every $p \in \Sigma$ we have
a well-defined \textit{outward unit normal} vector field,
$N(p):\Sigma \to \mathbb{S}^{2}$,
usually called the \textit{Gauss normal map}.
For every $p \in \Sigma$, the differential
of the Gauss normal map $dN(p) : T_{p}\Sigma \to T_{N(p)}\mathbb{S}^{2},$
defines a positive definite and symmetric
quadratic form on $T_{p}\Sigma$ whose two real eigenvalues are usually
called \textit{principal curvatures} of $\Sigma$ at $p$.
The arithmetic mean of these principal curvatures is the \textit{mean curvature}
and their product is the \textit{Gaussian curvature}.
The importance of the latter became particularly clear after
Gauss' famous \textit{Theorema Egregium}, which asserts that
Gaussian curvature is intrinsic and is also an
isometric invariant of the surface $\Sigma$.

The notions of curvature, as briefly recalled above,
can be extended to far more general situations, for instance to submanifolds of higher codimension in $\reals^{n}$,
and also to the broader geometrical context provided by Riemannian geometry,
as was done by Riemann.
In particular, we will consider 2-dimensional Riemannian manifolds isometrically embedded
into 3-dimensional Riemannian manifolds. We refer to Section \ref{C3} for details.

Our interest in the study of curvatures of
surfaces in $\heis$ is motivated by the still ongoing studies in the context of
sub-Riemannian manifolds or more specific structures like Carnot groups,
whose easiest example is provided by the first Heisenberg group $\heis$.
Restricting our attention to $\heis$,
there is a currently accepted notion of \textit{horizontal mean curvature} $\cH_0$
at non-characteristic points of Euclidean regular surfaces.
This notion has been considered by Pauls (\cite{P04}) via the method of
Riemannian approximants, but has also been proved to be equivalent to
other notions of mean curvature appearing in the literature (e.g.\ \cite{CHMY05} or \cite{DGN07}).

The method of Riemannian approximants relies on a famous result due to Gromov, which states that the metric
space $(\mathbb{H},d_{cc})$ can be obtained as
the pointed Gromov-Hausdorff limit of
a family of metric spaces $(\mathbb{R}^{3}, g_L)$,
where $g_L$ is a suitable family
of Riemannian metrics. The Riemannian approximation scheme has also proved
to be a very efficient tool in more analytical settings, for instance, in the study
of estimates for fundamental solutions of the sub-Laplacian $\Delta_{\he}$ (e.g.\ \cite{CM06, CCM07}) as well as regularity theory for sub-Riemannian curvature flows (e.g.\ \cite{ccm:mcf}). The preceding represents only a small sample of the many applications of the Riemannian approximation method in sub-Riemannian geometric analysis, and we refer the reader to the previously cited papers for more information and references to other work in the literature. The monograph \cite{CDPT} provides a detailed description of the Riemannian approximation scheme in the setting of the Heisenberg group.

Let us denote by $X_{1}, X_{2}$ and $X_{3}$ the left-invariant vector
fields which span the Lie algebra $\mathfrak{h}$ of $\heis$. In particular,
$[X_1, X_2]=X_3$. In order to exploit the contact nature of $\heis$ it
is customary to define an inner product $\scal{\cdot}{\cdot}_{\he}$ which makes
$\{X_1, X_2\}$ an orthonormal basis. A possible way to define a Riemannian scalar
product is to set $\XtL := X_{3}/\sqrt{L}$ for every $L>0$,
and then to extend $\scal{\cdot}{\cdot}_{\he}$ to a scalar product $\scal{\cdot}{\cdot}_{L}$ which makes
$\{X_1,X_2, \XtL\}$ an orthonormal basis.
The family of metric spaces $(\reals^{3}, g_{L})$ converges to $(\heis, d_{cc})$ in the pointed Gromov-Hausdorff sense.

Within this family of Riemannian manifolds, we can now perform computations
adopting the unique Levi-Civita connection associated to the family of Riemannian metrics
$g_{L}$. Obviously, all the results are expected depend on the positive constant $L$.
The plan is to extract horizontal notions out of the computed objects and to study
their asymptotics in $L$ as $L \to +\infty$. This is the technique adopted
in \cite{P04} to define a notion of horizontal mean curvature.

It is natural to ask whether such a method can be employed
to study the curvature of curves, and especially to articulate an appropriate
notion of \textit{sub-Riemannian Gaussian curvature}.
One attempt in this direction has been carried out in \cite{CPT10}, where the authors proposed
a notion of horizontal second fundamental form in relation with $H$-convexity.
A different notion of sub-Riemannian Gaussian curvature for graphs has been suggested in \cite{DGN03}.

Our approach follows closely the classical theory of Riemannian geometry and leads us to
the following notion of \textit{sub-Riemannian curvature} for
a Euclidean $C^{2}$-smooth and regular curve $\gamma=(\gamma_1, \gamma_2, \gamma_3):[a,b]\to \heis$:
\begin{equation}\label{eq:k0INTRO}
k_{\gamma}^{0} =
\begin{cases}
\dfrac{|\cuo \ddot{\gamma}_2 - \cut \ddot{\gamma}_1|}{(\cuo^2 + \cut^2)^{3/2}}, &\mbox{if $\gamma(t)$ is a horizontal point of $\gamma$,}  \\

\\
\dfrac{\sqrt{\cuo^{2}+ \cut^{2}}}{|\omega(\dot{\gamma})|}, & \mbox{if $\gamma(t)$ is not a horizontal point of $\gamma$.}
\end{cases}
\end{equation}
Here $\omega = dx_3 -\tfrac{1}{2}\left( x_1 dx_2 - x_2 dx_1 \right)$ is the
standard contact form on $\reals^3$. We stress that, when dealing with \textit{purely} horizontal curves, the above notion
of curvature is already known and appears frequently in the literature.

An analogous procedure allow us to define also a notion
of \textit{sub-Riemannian signed geodesic curvature} for
Euclidean $C^{2}$-smooth and regular curves $\gamma=(\gamma_1, \gamma_2, \gamma_3):[a,b]\to \Sigma \subset \heis$
living on a surface $\Sigma = \{ x\in \heis: u(x)=0\}$, with
$u \in C^{2}(\reals^{3})$. This notion takes the form
\begin{equation}\label{eq:ksIntro}
k_{\gamma, \Sigma}^{0,s} =
\left \{
\begin{array}{rl}
\dfrac{\bar{p} \cuo + \bar{q} \cut}{|\omega(\cu)|}, & \mbox{if $\gamma(t)$ is a non-horizontal point,} \\
0 , & \mbox{if $\gamma(t)$ is a horizontal point,}
\end{array}\right.
\end{equation}
where $\hgrad u = (X_1u,X_2u)$, $\bar{p}=\tfrac{X_1 u}{\|\hgrad u \|_{\he}}$ and $\bar{q}=\tfrac{X_2 u}{\|\hgrad u \|_{\he}}$. We refer to Section \ref{C1} and Section \ref{C2} for precise statements and definitions.

In the same spirit we introduce a notion
of \textit{sub-Riemannian Gaussian curvature} $\cK_0$
away from characteristic points.
We will work with
Euclidean $C^{2}$-smooth surfaces $\Sigma = \{x\in \heis:
u(x)=0\}$, whose characteristic set
$C(\Sigma)$ is defined as the set of points $x\in \Sigma$
where $\hgrad u(x)=(0,0)$.
The explicit expression of $\cK_0$ reads as follows:
\begin{equation}\label{eq:K0intro}
\cK_0 = -\left( \dfrac{X_3 u}{\|\hgrad u\|_{\he}}\right)^{2}
-  \left( \dfrac{X_2 u}{\|\hgrad u\|_{\he}}\right) X_1 \left(\dfrac{X_3 u}{\|\hgrad u\|_{\he}}\right)
+ \left( \dfrac{X_1 u}{\|\hgrad u\|_{\he}} \right) X_2 \left(\dfrac{X_3 u}{\|\hgrad u\|_{\he}} \right).
\end{equation}
The quantity in \eqref{eq:K0intro} cannot easily be viewed as a symmetric polynomial of any kind of
horizontal Hessian. Moreover, the expression of $\cK_0$ written
above resembles one of the integrands, the one which would be
expected to replace the classical Gaussian curvature,
appearing in the Heisenberg Steiner's formula proved in \cite{BFFVW}.
The discrepancy between these two quantities will be the object
of further investigation.

The definition of an appropriate notion of sub-Riemannian
Gaussian curvature leads to
the question of proving a suitable Heisenberg
version of the celebrated Gauss--Bonnet Theorem,
which is the first main result of this paper.
For a surface $\Sigma = \{ x \in \heis: u(x)=0\},$
with $u \in C^{2}(\reals^{3})$, our main theorem is as follows.

\begin{theorem}\label{HGB}
Let $\Sigma \subset \heis$ be a regular surface with finitely many
boundary components $(\partial \Sigma)_{i}$, $i \in \{1, \ldots,n\}$, given by
Euclidean $C^{2}$-smooth regular and closed curves $\gamma_{i}:[0,2\pi] \to (\partial \Sigma)_{i}.$
Let $\cK_0$ be the sub-Riemannian Gaussian curvature of $\Sigma$, and $k_{\gamma_{i}, \Sigma}^{0,s}$
the sub-Riemannian signed geodesic curvature of $\gamma_{i}$ relative to $\Sigma$.
Suppose that the characteristic set $C(\Sigma)$ satisfies $\mathcal{H}_{E}^{1}(C(\Sigma))=0$,
and that $\|\hgrad u \|_{\he}^{-1}$ is locally summable with respect to
the Euclidean 2-dimensional Hausdorff measure near the characteristic set $C(\Sigma)$.
Then
$$
\int_{\Sigma}\cK_0 \, d\mathcal{H}_{cc}^{3}
+ \sum_{i=1}^{n}\int_{\gamma_{i}}k_{\gamma_{i}, \Sigma}^{0,s} \, d\dot{\gamma}_{i}
=0.
$$
\end{theorem}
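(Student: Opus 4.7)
The natural strategy is to apply the classical Riemannian Gauss--Bonnet theorem in each approximant $(\reals^3, g_L)$ and then pass to the sub-Riemannian limit $L \to \infty$, exploiting the Riemannian-to-sub-Riemannian asymptotic expansions for the curvature quantities $\cK_0$ and $k^{0,s}$ established in Sections \ref{C1}--\ref{C3}. Two obstacles must be addressed simultaneously: the degeneracy at the characteristic set $C(\Sigma)$, and the reconciliation of the Riemannian topological term $2\pi \chi(\Sigma)$ with the zero right-hand side of the sub-Riemannian identity.

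The first step is an exhaustion of the non-characteristic part of $\Sigma$. For small $\eps > 0$, set $\Sigma_\eps := \Sigma \setminus U_\eps(C(\Sigma))$, where $U_\eps(C(\Sigma))$ denotes an Euclidean $\eps$-tubular neighborhood of $C(\Sigma)$. Because $\mathcal{H}_E^1(C(\Sigma))=0$, one can choose $\eps$ so that $\Sigma_\eps$ is a regular surface with piecewise $C^2$ boundary consisting of (essentially) the original components $\gamma_i$ together with new components $\beta_\eps$ encircling $C(\Sigma)$; moreover $\|\hgrad u\|_\he$ is bounded away from zero on a neighborhood of $\overline{\Sigma_\eps}$, so the sub-Riemannian structure is non-degenerate there.

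The second step applies the classical Riemannian Gauss--Bonnet theorem to $(\Sigma_\eps, g_L|_{\Sigma_\eps})$:
\[
\int_{\Sigma_\eps} K^L_\Sigma\, dA_L + \sum_{i=1}^n \int_{\gamma_i} k^{L,s}_{\gamma_i,\Sigma}\, ds_L + \int_{\beta_\eps} k^{L,s}_{\beta_\eps,\Sigma}\, ds_L = 2\pi\, \chi(\Sigma_\eps).
\]
On the compact set $\Sigma_\eps$ the horizontal gradient is uniformly bounded below, so the Riemannian-to-sub-Riemannian expansions from Sections \ref{C1}--\ref{C3} apply uniformly. After the appropriate normalization in $L$, each Riemannian integrand converges to its sub-Riemannian counterpart, while the topological term $2\pi\chi(\Sigma_\eps)$, being of strictly lower order in $L$, vanishes in the limit. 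Passing $L \to \infty$ yields
\[
\int_{\Sigma_\eps} \cK_0\, d\mathcal{H}_{cc}^3 + \sum_{i=1}^n \int_{\gamma_i} k^{0,s}_{\gamma_i,\Sigma}\, d\dot\gamma_i + \int_{\beta_\eps} k^{0,s}_{\beta_\eps,\Sigma}\, d\dot\beta_\eps = 0.
\]

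The final step is to let $\eps \to 0$. The explicit expression \eqref{eq:K0intro} for $\cK_0$ and the representation of $d\mathcal{H}_{cc}^3$ in terms of $\|\hgrad u\|_\he\, d\mathcal{H}_E^2 / |\nabla u|$ show that near $C(\Sigma)$ the measure $|\cK_0|\, d\mathcal{H}_{cc}^3$ is dominated by a constant multiple of $\|\hgrad u\|_\he^{-1}\, d\mathcal{H}_E^2$; the local summability hypothesis then gives, by dominated convergence,
\[
\int_{\Sigma_\eps} \cK_0\, d\mathcal{H}_{cc}^3 \longrightarrow \int_\Sigma \cK_0\, d\mathcal{H}_{cc}^3.
\]
The main analytic obstacle is to prove that the encircling integral $\int_{\beta_\eps} k^{0,s}_{\beta_\eps,\Sigma}\, d\dot\beta_\eps$ tends to zero. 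Using $\mathcal{H}_E^1(C(\Sigma))=0$ to cover $C(\Sigma)$ by Euclidean sets of arbitrarily small total one-dimensional measure controls the length of $\beta_\eps$, while the possible blow-up of $k^{0,s}$ near $C(\Sigma)$ is again dominated by $\|\hgrad u\|_\he^{-1}$ (via the explicit formula \eqref{eq:ksIntro}). Combining these, the boundary integral is bounded by the tail of an integrable majorant and must vanish as $\eps \to 0$. This is the only step where both hypotheses of the theorem enter essentially; combining the two limits then completes the proof.
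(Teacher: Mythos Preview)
Your overall architecture matches the paper's proof: excise a small neighborhood of $C(\Sigma)$, apply the classical Gauss--Bonnet on the non-characteristic piece $\Sigma_\eps$ in each $(\reals^3,g_L)$, divide by $\sqrt{L}$, pass $L\to\infty$ by dominated convergence, and then send $\eps\to 0$. The paper carries out exactly these two steps, and in particular spends most of Step~1 writing down explicit $L$-independent bounds for $|\cK_L|$ and for the boundary integrand $\scal{D_t^\Sigma\dot\gamma}{J_L(\dot\gamma)}_{L}/(\sqrt{L}\,\|\dot\gamma\|_L^2)$ so that dominated convergence is justified; you gloss over this, but you correctly identify it as the point.

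There is, however, a genuine misidentification in your final paragraph. You write that ``the possible blow-up of $k^{0,s}$ near $C(\Sigma)$ is again dominated by $\|\hgrad u\|_\he^{-1}$'' and that the encircling integral is ``bounded by the tail of an integrable majorant.'' This is not how the paper (or the estimate) works, and your version would not close. From \eqref{eq:ksIntro} and the definition of $d\dot\beta$ one has
\[
k^{0,s}_{\beta_\eps,\Sigma}\, d\dot\beta_\eps \;=\; \frac{\bar p\,\dot\beta_{\eps,1}+\bar q\,\dot\beta_{\eps,2}}{|\omega(\dot\beta_\eps)|}\,|\omega(\dot\beta_\eps)|\,dt \;=\; (\bar p\,\dot\beta_{\eps,1}+\bar q\,\dot\beta_{\eps,2})\,dt,
\]
and since $\bar p^2+\bar q^2=1$ this is bounded pointwise by the Euclidean speed of the projected curve. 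Hence $\bigl|\int_{\beta_\eps} k^{0,s}_{\beta_\eps,\Sigma}\, d\dot\beta_\eps\bigr|$ is controlled simply by the total Euclidean length of the encircling curves, and the hypothesis $\mathcal{H}_E^1(C(\Sigma))=0$ (equivalently, the (R)-property of Lemma~\ref{Rprop}) is exactly what forces this length to be $\le\eps$. No blow-up occurs, and the summability of $\|\hgrad u\|_\he^{-1}$ plays no role here; that hypothesis is used only to make $\cK_0\in L^1(\Sigma,d\mathcal{H}^3_{cc})$ via the pointwise bound $|\cK_0|\,d\mathcal{H}^3_{cc}\lesssim \|\hgrad u\|_\he^{-1}\,d\mathcal{H}^2_E$, so that the surface integral over $\Sigma_\eps$ converges to the integral over $\Sigma$. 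Once you replace your last argument by this observation, your proof coincides with the paper's.
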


The sharpness of the assumption made on the 1-dimensional
Euclidean Hausdorff measure $\mathcal{H}_{E}^{1}(C(\Sigma))$
of the characteristic set $C(\Sigma)$ is discussed in Section \ref{gb},
while comments on the local summability asked for $\|\hgrad u\|_{\he}^{-1}$
are postponed to Section \ref{questions}. The measure $d\dot{\gamma}_i$ on the $i$th boundary curve $(\partial\Sigma)_i$
in the statement of Theorem \ref{HGB} is the limit of scaled length measures in the Riemannian approximants. We remark that this measure vanishes along purely horizontal boundary curves.

Gauss--Bonnet type theorems have previously been obtained by Diniz and Veloso \cite{DV12} for non-characteristic surfaces in $\heis$,
and by Agrachev, Boscain and Sigalotti \cite{ABS08} for almost-Riemannian structures. We would also like to mention the results obtained by Bao and Chern \cite{BC96} in Finsler spaces.

The notion of \textit{horizontal mean curvature} has featured
in a long and ongoing research program concerning
the study of \textit{constant mean curvature} surfaces in
$\heis$, especially in relation to {\it Pansu's isoperimetric problem} (e.g. \cite{N04}, \cite{RR06}, \cite{HP08}, \cite{dgn:partial} or
\cite{CDPT}).
A simplified version of the aforementioned Gauss--Bonnet Theorem \ref{HGB}, i.e.,
when we consider a compact, oriented, Euclidean $C^{2}$-surface with no boundary,
or with boundary consisting of fully horizontal curves, ensures that the only compact
surfaces with constant \textit{sub-Riemannian Gaussian curvature} have $\cK_0 = 0$.

Our main application concerns a Steiner's formula for non-characteristic surfaces. This result
(see Theorem \ref{Torus}) is a simplification of the Steiner's formula recently proved in \cite{BFFVW}.

The structure of the paper is as follows. In Section \ref{not} we provide
a short introduction to the first Heisenberg group $\heis$ and the
notation which we will use throughout the paper, with a special focus to the Riemannian approximation scheme.
In Section \ref{C1} and \ref{C2} we adopt the Riemannian
approximation scheme to derive the expression (\ref{eq:k0INTRO}) for the
sub-Riemannian curvature of Euclidean $C^{2}$-smooth curves in $\heis$,
and the expression (\ref{eq:ksIntro}) for the sub-Riemannian geodesic curvature of curves
on surfaces. In Section \ref{C3}, we will derive the expression
(\ref{eq:K0intro}) for the \textit{sub-Riemannian Gaussian curvature}.
In Section \ref{gb} we prove Theorem \ref{HGB}
and its corollaries. Section \ref{examples} contains the proof
of Steiner's formula for non-characteristic surfaces.
In Section \ref{questions} we present a Fenchel-type
theorem for horizontal closed curves (see Theorem \ref{Fenchel})
and we pose some questions.
One of the more interesting and challenging questions concerns the summability
of the sub-Riemannian Gaussian curvature $\cK_0$
with respect to the Heisenberg perimeter measure
near isolated characteristic points. This summability issue is closely related to the open problem
posed in \cite{DGN12} concerning the summability
of the horizontal mean curvature $\cH_0$
with respect to the Riemannian surface measure near the characteristic set.
To end the paper, we add an appendix where we collect several examples
of surfaces in which we compute explicitly the sub-Riemannian Gaussian curvature $\cK_0$.

\

\paragraph{\bf Acknowlegements.} Research for this paper was conducted during visits of the second and third authors to the University of Bern in 2015 and 2016. The hospitality of the Institute of Mathematics of the University of Bern is gratefully acknowledged. The authors would also like to thank Luca Capogna for many valuable conversations on these topics and for helpful remarks concerning the proof of Theorem \ref{HGB}.

\section{Notation and background}\label{not}
Let $\heis$ be the first Heisenberg group where
the non-commutative group law is given by
$$(y_1,y_2,y_3) \ast (x_1,x_2,x_3) =
\left(x_1 + y_1, x_2 + y_2, x_3 + y_3 - \dfrac{1}{2}(x_1 y_2 - x_2 y_1) \right).$$
The corresponding Lie algebra of left-invariant vector fields
admits a $2$-step stratification,
$\mathfrak{h} = \mathfrak{v}_1 \oplus \mathfrak{v}_2$,
where $\mathfrak{v}_{1} = \mathrm{span}\{X_1,X_2\}$ and
$\mathfrak{v}_2 = \mathrm{span}\{X_3\}$ for
$X_1 = \partial_{x_1} - \tfrac12{x_2}\partial_{x_3}$,
$X_2 = \partial_{x_2} + \tfrac12{x_1}\partial_{x_3}$
and $X_3 = [X,Y] = \partial_{x_3}$.
On $\heis$ we consider also the standard contact form of $\mathbb{R}^{3}$
$$\omega = dx_3 - \dfrac{1}{2} \left( x_1 dx_2 -  x_2 dx_1 \right).$$

The left-invariant vector fields $X_{1}$ and $X_{2}$ play a major role  in the theory of
the Heisenberg group because they span a two-dimensional plane distribution $H \heis$, known as the \textit{horizontal distribution}, which is also the kernel of the contact form $\omega$:
$$
H_{x}\heis := \mathrm{span}\{ X_{1}(x), X_{2}(x)\} = (\mathrm{Ker} \omega)(x), \qquad x \in \heis.
$$
This smooth distribution of planes is a subbundle of the tangent bundle of $\heis$, and
it is a non integrable distribution because $[X_{1}, X_{2}] = X_{3} \notin H\heis$.
We can define an inner product $\scal{\cdot}{\cdot}_{x,\he}$ on $H \heis$,
so that for every $x \in \heis$, $\{X_{1}(x), X_{2}(x)\}$
forms a orthonormal basis of $H_{x} \heis$.
We will then denote by $\| \cdot \|_{x,\he}$ the horizontal norm induced
by the scalar product $\scal{\cdot}{\cdot}_{x, \he}$.
In both cases, we will omit the dependence on the base point $x \in \heis$
when it is clear.

\begin{definition}
An absolutely continuous curve $\gamma:[a,b]\subset \mathbb{R} \to \heis$ is said to
be horizontal if $\dot{\gamma}(t) \in H_{\gamma(t)}\heis$ for a.e.\ $t \in [a,b]$.
\end{definition}

\begin{definition}\label{length}
Let $\gamma:[a,b]\to \heis$ a horizontal curve.
The horizontal length $l_{\textnormal{H}}(\gamma)$ of $\gamma$ is defined as
$$l_{\textnormal{H}}(\gamma) := \int_{a}^{b} \|\cu\|_{\textnormal{H}} \, dt.$$
\end{definition}

It is standard to equip the Heisenberg group $\heis$ with a path-metric
known as Carnot-Carath\'{e}odory, or $cc$, distance:

\begin{definition}
Let $x,y \in \heis$, with $x \neq y$. The $cc$ distance between $x,y$ is defined as
$$d_{cc}(x,y):= \inf \{ l_{\textnormal{H}}(\gamma) | \gamma:[a,b] \to \heis, \gamma(a)=x, \gamma(b)=y\}$$
\end{definition}

Dilations of the Heisenberg group are defined as follows:
\begin{equation}\label{dilations}
\delta_r(x_1,x_2,x_3) = (rx_1,rx_2,r^2x_3), \qquad r>0
\end{equation}
It is easy to verify that dilations are compatible with the group operation: $\delta_r(y*x)=\delta_r(y)*\delta_r(x)$, $x,y\in\heis$, $r>0$, and that the $cc$ distance is homogeneous of order one with respect to dilations: $d_{cc}(\delta_r(x),\delta_r(y)) = r\,d_cc(x,y)$. The scaling behavior of the left-invariant vector fields $X_1,X_2,X_3$ with respect to dilations is as follows:
$$
X_1(f\circ\delta_r) = r \, X_1f \circ \delta_r, \quad
X_2(f\circ\delta_r) = r \, X_2f \circ \delta_r, \quad
X_3(f\circ\delta_r) = r^2 \, X_3f \circ \delta_r.
$$

We are now ready to implement the Riemannian approximation scheme.
First, let us define $\XtL := \tfrac{X_3}{\sqrt{L}}$ for $L>0$.
We define a family of Riemannian metrics $(g_L)_{L>0}$ on $\mathbb{R}^{3}$
such that $\{X_1,X_2,\XtL \}$ becomes an orthonormal basis.
The choice of this specific family of Riemannian metrics
on $\reals^{3}$ is indicated by the following theorem.

\begin{theorem}[Gromov]
 The family of metric spaces $(\reals^{3}, g_L)$ converges
to $(\heis, d_{cc})$ in the
 pointed Gromov-Hausdorff sense as $L \to +\infty$.
\end{theorem}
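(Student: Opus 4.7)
\emph{Proof proposal.} The strategy is to verify the stronger claim that the Riemannian distance functions $d_L$ induced by $g_L$ converge to $d_{cc}$ uniformly on compact subsets of $\reals^3 \times \reals^3$. Since the underlying set is common to all the metric spaces in question, this uniform convergence immediately implies pointed Gromov--Hausdorff convergence: for any prescribed basepoint and radius, the identity map on the relevant ball serves as an $\eps$-isometry once $L$ is sufficiently large.

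For the \textbf{upper bound} $d_L \le d_{cc}$, observe that any horizontal curve $\gamma$ has $\dot\gamma \in \mathrm{span}\{X_1,X_2\}$, and since $\{X_1,X_2\}$ is orthonormal in both $\scal{\cdot}{\cdot}_{\he}$ and $\scal{\cdot}{\cdot}_L$, the $g_L$-length of $\gamma$ equals its horizontal length. Taking the infimum over horizontal competitors joining $x$ to $y$ yields $d_L(x,y) \le d_{cc}(x,y)$ for every $L>0$.

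For the \textbf{lower bound}, fix $x,y \in \reals^3$ and, for each $L$, choose a unit-speed $g_L$-minimizing geodesic $\gamma_L : [0,T_L] \to \reals^3$ from $x$ to $y$; by the upper bound, $T_L = d_L(x,y) \le d_{cc}(x,y)$. Writing $\dot\gamma_L = a_L X_1 + b_L X_2 + c_L X_3$, the unit-speed condition reads $a_L^2 + b_L^2 + L c_L^2 = 1$, so $c_L^2 \le 1/L$ pointwise. Let $\til\gamma_L$ be the horizontal lift in $\heis$ of the planar projection $(\gamma_{L,1}, \gamma_{L,2})$ starting at $x$. A direct computation from the formulas for $X_1, X_2, X_3$ shows that $\dot{\til\gamma}_{L,3} = \dot\gamma_{L,3} - c_L$, so the endpoint $y_L := \til\gamma_L(T_L)$ agrees with $y$ in the first two coordinates and satisfies, by Cauchy--Schwarz,
\[
|y_{L,3} - y_3| = \Bigl| \int_0^{T_L} c_L \, dt \Bigr| \le \sqrt{T_L}\, \Bigl( \int_0^{T_L} c_L^2 \, dt \Bigr)^{1/2} \le T_L / \sqrt{L}.
\]
Since $\til\gamma_L$ is horizontal, $d_{cc}(x, y_L) \le l_{\he}(\til\gamma_L) = \int_0^{T_L}\!\sqrt{a_L^2+b_L^2}\, dt \le T_L = d_L(x,y)$. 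Combining this with the ball-box estimate $d_{cc}(y_L, y) \le C\sqrt{|y_{L,3}-y_3|}$, which follows from the homogeneity of $d_{cc}$ under the parabolic dilations \eqref{dilations} applied to the purely vertical Heisenberg displacement from $y_L$ to $y$, gives
\[
d_{cc}(x,y) \le d_{cc}(x,y_L) + d_{cc}(y_L, y) \le d_L(x,y) + C\, T_L^{1/2} L^{-1/4},
\]
and letting $L \to \infty$ produces $d_{cc}(x,y) \le \liminf_{L\to\infty} d_L(x,y)$.

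The \textbf{main obstacle} is the endpoint correction in the vertical direction. The pointwise bound $Lc_L^2 \le 1$ forces the $g_L$-minimizers to become increasingly horizontal as $L \to \infty$, but the residual discrepancy in the third coordinate of the endpoint must then be closed up at $cc$-cost; the crux is that the characteristic square-root behavior of $d_{cc}$ on purely vertical displacements, a direct consequence of the parabolic scaling \eqref{dilations}, keeps this correction asymptotically negligible. Uniformity on compact sets is then a soft consequence of the above quantitative estimates, using the uniform bound $T_L \le d_{cc}(x,y)$ and the joint continuity of $d_{cc}$ on compacta.
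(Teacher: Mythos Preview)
Your argument is correct. The paper itself does not prove this theorem; it simply records the statement and refers the reader to \cite[Chapter~2]{CDPT} for a proof (together with the subsequent proposition on convergence of geodesic arcs). So there is no ``paper's own proof'' to compare against.

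That said, your route is essentially the standard one and is what one finds in the cited reference: the inequality $d_L \le d_{cc}$ is immediate from the fact that horizontal curves have the same length in every $g_L$, and the reverse asymptotic inequality comes from showing that $g_L$-minimizers are forced to be nearly horizontal, with the residual vertical defect controlled via the square-root cost of vertical displacement under $d_{cc}$. One small point you leave implicit is why uniform convergence of $d_L$ to $d_{cc}$ on compacta suffices for \emph{pointed} Gromov--Hausdorff convergence: you should note that for $L \ge 1$ the metrics satisfy $d_1 \le d_L \le d_{cc}$, so the $d_L$-balls of any fixed radius about the basepoint are trapped in a common $d_1$-ball, hence in a fixed compact subset of $\reals^3$; this is what guarantees the identity map is an $\eps$-isometry between the relevant metric balls.
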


This deep result continue to hold even for more general Carnot groups,
but there is one additional feature which is valid for $\heis$:

\begin{proposition}
 Any length minimizing horizontal curve $\gamma$ joining $x \in \heis$ to
 the origin $0 \in \heis$ is the uniform limit as $L \to +\infty$ of geodesic
 arcs joining $x$ to $0$ in the Riemannian manifold $(\reals^{3},g_L)$.
\end{proposition}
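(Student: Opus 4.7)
The plan is to produce the approximating Riemannian geodesics via Arzel\`a--Ascoli and Hamiltonian continuity. I would start with any sequence of $g_L$-minimizing geodesics from $x$ to $0$, verify that uniform subsequential limits are horizontal CC length-minimizers, and then use continuity of the Hamiltonian flow to select the approximants so as to reach a prescribed CC minimizer.

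Fix $L>0$ and let $\gamma_L:[0,d_L(x,0)]\to\reals^{3}$ be a $g_L$-geodesic from $x$ to $0$ parametrized by $g_L$-arclength. Decomposing $\dot\gamma_L=a_L X_1+b_L X_2+c_L X_3$, the unit-speed condition reads $a_L^2+b_L^2+L c_L^2\equiv 1$, so $|a_L|,|b_L|\le 1$ and $|c_L|\le 1/\sqrt{L}$ pointwise. Since horizontal curves have identical $g_L$-length and horizontal length, $d_L(x,0)\le d_{cc}(x,0)$ uniformly in $L$, whence the parameter intervals are bounded; the coordinate ODE for $\gamma_L$ then confines the images to a fixed Euclidean compact set $K$ depending only on $x$ and $d_{cc}(x,0)$. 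On $K$ the Euclidean speed is bounded by $C_K(|a_L|+|b_L|+|c_L|)\le 3C_K$, so after rescaling to $[0,1]$ the family $\{\gamma_L\}$ is equi-Lipschitz in the Euclidean metric. Arzel\`a--Ascoli provides a subsequence $\gamma_{L_k}$ converging uniformly to some Lipschitz curve $\tilde\gamma:[0,1]\to\reals^{3}$ with $\tilde\gamma(0)=x$ and $\tilde\gamma(1)=0$.

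Horizontality and minimality of $\tilde\gamma$ come from the estimate $\|c_{L_k}\|_{L^2}^2\le d_{L_k}(x,0)/L_k\to 0$, which forces the $X_3$ component of $\dot{\tilde\gamma}$ to vanish almost everywhere. Passing to a further subsequence along which $(a_{L_k},b_{L_k})\rightharpoonup(\tilde a,\tilde b)$ weakly in $L^2$, convexity of the Euclidean norm and lower semicontinuity yield
$$
l_{\textnormal{H}}(\tilde\gamma)=\int_0^1\sqrt{\tilde a^2+\tilde b^2}\,dt\le\liminf_{k\to\infty}\int_0^1\sqrt{a_{L_k}^2+b_{L_k}^2+L_k c_{L_k}^2}\,dt=\liminf_{k\to\infty}d_{L_k}(x,0)\le d_{cc}(x,0).
$$
Since $\tilde\gamma$ is horizontal and joins $x$ to $0$, equality holds throughout, so $\tilde\gamma$ is a CC length-minimizer; as a byproduct $d_L(x,0)\to d_{cc}(x,0)$.

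The main obstacle is the non-uniqueness of CC geodesics, which occurs precisely when $x$ lies on the $x_3$-axis. When CC geodesics from $x$ to $0$ are unique, $\tilde\gamma=\gamma$ is forced, and the full sequence converges uniformly to $\gamma$. In general one must reverse the construction and realize the prescribed $\gamma$ as a limit, which I would do via the Hamiltonian formalism: the Riemannian cometrics carry Hamiltonians
$$
H_L(p,q)=\tfrac12\bigl(\langle p,X_1\rangle^2+\langle p,X_2\rangle^2+\tfrac1L\langle p,X_3\rangle^2\bigr),
$$
which converge smoothly as $L\to+\infty$ to the sub-Riemannian Hamiltonian $H_\infty=\tfrac12(\langle p,X_1\rangle^2+\langle p,X_2\rangle^2)$. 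Continuous dependence of Hamiltonian flows on parameters and initial data lets one perturb the initial covector of $\gamma$ to covectors $p_L$ whose $g_L$-geodesic emanating from $x$ ends at $0$ and has initial covector $p_L\to p_\infty$; the compactness argument above then upgrades these selected arcs to a uniformly convergent sequence with limit $\gamma$.
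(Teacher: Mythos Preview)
The paper does not prove this proposition at all: immediately after stating it (and the preceding Gromov theorem) the text says ``For both results, we refer to \cite[Chapter 2]{CDPT}.'' So there is no in-paper argument to compare against; your proposal is supplying a proof where the authors simply cite the monograph of Capogna--Danielli--Pauls--Tyson.

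Your compactness half is essentially the standard argument and is fine: the uniform bound $d_L(x,0)\le d_{cc}(x,0)$, the pointwise control $|c_L|\le L^{-1/2}$, Arzel\`a--Ascoli, and lower semicontinuity of length under weak convergence combine exactly as you indicate to show that subsequential limits of $g_L$-minimizers are horizontal CC-minimizers and that $d_L\to d_{cc}$. This is the route taken in \cite{CDPT}.

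The part that is genuinely incomplete is your final paragraph. Saying that $H_L\to H_\infty$ smoothly and invoking ``continuous dependence of Hamiltonian flows on parameters and initial data'' only tells you that if you fix the initial covector and vary $L$, the resulting arcs converge. It does \emph{not}, by itself, produce covectors $p_L\to p_\infty$ whose $g_L$-geodesics from $x$ land \emph{exactly} at $0$. For that you need either (i) a nondegeneracy statement for the endpoint map $p\mapsto \exp_x^{g_L}(T,p)$ near $p_\infty$ so that the implicit function theorem applies, or (ii) the explicit closed-form expressions for $g_L$- and CC-geodesics in $\heis$, from which one reads off the approximating arcs directly. In \cite[Chapter~2]{CDPT} this is handled by the second route: the $g_L$-geodesics are written down explicitly (they are helices whose parameters depend smoothly on $L$), and the convergence to the given CC geodesic---including the degenerate vertical-axis case---is checked by inspection. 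Your sketch points in the right direction but stops short of the step that actually closes the argument.
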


For both results, we refer to \cite[Chapter 2]{CDPT}.

Continuing with notation, the scalar product that makes $\{X_1,X_2,\XtL \}$
an orthonormal basis will
be denoted by $\scal{\cdot}{\cdot}_{L}$. Explicitly,
this means that, given $V = v_1 X_1 + v_2 X_2 + v_3 X_3$
and $W = w_1 X_1 + w_2 X_2 + w_3 X_3$,
$$\scal{V}{W}_{L} = v_1 w_1 + v_2 w_2 + L v_3 w_3.$$
Obviously, if we write $V$ and $W$ in the $\{X_1,X_2,\XtL \}$ basis,
i.e., $V = v_1 X_1 + v_2 X_2 + v^{L}_{3} \XtL$
where $v^{L}_{3} = v_3 \sqrt{L}$ (and similarly for $W$), we have
$$
\scal{V}{W}_{L} =  v_1 w_1 + v_2 w_2 + v^{L}_{3} w^{L}_{3} = v_1 w_1 + v_2 w_2 + L v_3 w_3.
$$
The following relations allow us to switch from the standard basis $\{e_1 ,e_2, e_3\}$
to $\{X_1,X_2,\XtL \}$ and vice versa:
\begin{equation*}
\begin{cases}
e_1 = X_1 + \tfrac12{x_2}\sqrt{L} \XtL, &   \\
e_2 = X_2 - \tfrac12{x_1}\sqrt{L} \XtL, &   \\
e_3 = \sqrt{L} \XtL, & \\
\end{cases}
\quad \textrm{and} \quad
\begin{cases}
X_1 = e_1 - \tfrac12{x_2} e_3, &   \\
X_2 = e_2 + \tfrac12{x_1} e_3, &   \\
\XtL = \tfrac{e_3}{\sqrt{L}}.  & \\
\end{cases}
\end{equation*}
In exponential coordinates, the metric $g_L$ is represented by
the $3\times 3$ symmetric matrix $(g_{L})_{ij} := \scal{e_i}{e_j}_L$, for $i,j=1,2,3.$
In particular,
$$g_{L}(x_1,x_2,x_3)=
\begin{pmatrix}
1+ \tfrac14 {x_{2}^{2}} L & - \tfrac14 {x_{1} x_{2}} L & \tfrac12 {x_2} L \\
- \tfrac14 {x_1 x_2} L & 1+ \tfrac14 {x_{1}^{2}} L & -\tfrac12 {x_1} L \\
\tfrac12 {x_2} L & -\tfrac12 {x_1} L & L \\
\end{pmatrix}.
$$
Then $\det(g_L(x)) = L$ and
\begin{displaymath}
g_{L}^{-1}(x_1,x_2,x_3)=
\begin{pmatrix}
1 & 0 & -\tfrac{x_2}{2} \\
0 & 1 & \tfrac{x_1}{2} \\
-\tfrac12 {x_2} & \tfrac12 {x_1} & \tfrac{4+L(x_{1}^{2} + x_{2}^{2})}{4L} \\
\end{pmatrix}
\end{displaymath}
Following the classical notation of Riemannian geometry, we
will denote by $g_{ij}$ the elements of the matrix $g_L$, and by
$g^{ij}$ the elements of its inverse $g_{L}^{-1}$.\\

A standard computational tool in Riemannian geometry
is the notion of \textit{affine connection}.

\begin{definition}
Let $\vef$ be the set of $C^{\infty}$-smooth vector fields on
a Riemannian manifold $M$. Let $\mathcal{D}(M)$ be the ring
of real-valued $C^{\infty}$-smooth functions on $M$.
An affine connection $\nabla$ on $M$ is a mapping
$$\nabla : \vef \times \vef \to \vef,$$
usually denoted by $(X,Y) \mapsto \nabla_{X}Y$, such that:
\begin{itemize}
	\item [i)] $\nabla_{fX+gY}Z = f \nabla_{X}Z+ g \nabla_{Y}Z.$
	\item [ii)] $\nabla_{X}(Y+Z) = \nabla_{X}(Y) + \nabla_{X}(Z).$
	\item [iii)] $\nabla_{X}(fY) = f \nabla_{X}Y + X(f)Y,$
\end{itemize}
 for every $X,Y,Z \in \vef$ and for every $f,g \in \mathcal{D}(M)$.
\end{definition}

It is well known that every Riemannian manifold is equipped with a privileged affine connection: the Levi-Civita connection $\nabla$.
This is the unique affine connection which is compatible
with the given Riemannian metric and symmetric, i.e.,
$$
X \scal{Y}{Z}_{L} = \scal{\nabla_{X}Y}{Z}_{L} + \scal{Y}{\nabla_{X}Z}_{L}
$$
and
$$
\nabla_{X}Y - \nabla_{Y}X = [X,Y]
$$
for every $X,Y,Z \in \vef$. A direct
proof of this fact yields the famous
\textit{Koszul identity}:
\begin{equation}\label{eq:Koszul}
\begin{aligned}
\scal{Z}{\nabla_{X}Y}_{L} &= \dfrac{1}{2} \big( X \scal{Y}{Z}_{L} + Y \scal{Z}{X}_{L} -Z \scal{X}{Y}_{L} \\
&-\scal{[X,Z]}{Y}_{L} - \scal{[Y,Z]}{X}_{L} - \scal{[X,Y]}{Z}_{L}\big)
\end{aligned}\end{equation}
for $X,Y,Z \in \vef$.

It is possible to write the Levi-Civita connection $\nabla$
in a local frame by making use of the Christoffel symbols $\Gamma_{ij}^{m}$.
In our case, due to the specific nature of the Riemannian manifold $(\reals^{3},g_L)$,
we can use a global chart given by the identity map of $\reals^{3}$.
The Christoffel symbols are uniquely determined by
$$\nabla_{e_i}e_j = \Gamma_{ij}^{m}e_m, \quad i,j,m=1,2,3.$$

\begin{lemma}
The Christoffel symbols $\Gamma_{ij}^{m}$ of the Levi-Civita
connection $\nabla$ of $(\reals^{3},g_L)$ are given by
\begin{equation}\label{eq:G1}
\Gamma_{ij}^{1} =
\begin{cases}
0, & (i,j)\in \{ (1,1), (1,3), (3,1), (3,3)\}, \\
\tfrac14 {x_2} L, & (i,j)\in \{(1,2),(2,1)\}, \\
-\tfrac12 {x_1} L, & (i,j)= (2,2), \\
\tfrac12 {L}, & (i,j)\in \{(2,3),(3,2)\},
\end{cases}
\end{equation}

\begin{equation}\label{eq:G2}
\Gamma_{ij}^{2} =
\begin{cases}
-\tfrac12 {x_2} L, & (i,j)= (1,1), \\
\tfrac14 {x_1} L, & (i,j)\in \{(1,2),(2,1)\}, \\
-\tfrac12 {L}, & (i,j)\in \{(1,3),(3,1)\}, \\
0, & (i,j)\in \{ (2,2), (2,3), (3,2), (3,3)\}, \\
\end{cases}
\end{equation}
and
\begin{equation}\label{eq:G3}
\Gamma_{ij}^{3} =
\begin{cases}
-\tfrac14 {x_1 x_2} L, & (i,j)=(1,1), \\
\tfrac18 ({x_{1}^{2}-x_{2}^{2}}) L, & (i,j)\in \{(1,2),(2,1)\}, \\
-\tfrac14 {x_1} L, & (i,j)\in \{(1,3),(3,1)\}, \\
\tfrac14 {x_1 x_2} L, & (i,j)=(2,2),\\
-\tfrac14 {x_2} L, & (i,j) \in \{ (2,3), (3,2)\},\\
0, & (i,j)=(3,3). \\
\end{cases}
\end{equation}
\end{lemma}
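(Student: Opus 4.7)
The proof is a direct computation via the Koszul identity (\ref{eq:Koszul}). Since $\{e_1, e_2, e_3\}$ is the standard coordinate frame on $\mathbb{R}^3$, all Lie brackets $[e_i, e_j]$ vanish, and (\ref{eq:Koszul}) collapses to the classical coordinate formula
\begin{equation*}
\Gamma_{ij}^{m} \;=\; \tfrac{1}{2}\, g^{mk}\bigl(\partial_i g_{jk} + \partial_j g_{ki} - \partial_k g_{ij}\bigr),
\end{equation*}
where the entries $g_{ij}$ of $g_L$ and $g^{ij}$ of $g_L^{-1}$ are read off from the two matrices displayed just above the lemma.

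The plan is to carry out this computation systematically. First I would exploit the symmetry $\Gamma_{ij}^m = \Gamma_{ji}^m$ to reduce to six ordered pairs $(i, j)$ with $i \le j$, and observe that every entry of $g_L$ is polynomial in $x_1, x_2$ and independent of $x_3$, so $\partial_3 g_{ij} = 0$ for all $i, j$. For each such pair it is convenient to first compute the three lowered quantities
\begin{equation*}
\Gamma_{ij,k} \;:=\; \tfrac{1}{2}\bigl(\partial_i g_{jk} + \partial_j g_{ki} - \partial_k g_{ij}\bigr), \qquad k = 1, 2, 3,
\end{equation*}
and then to raise the last index by contracting with the (very simple) rows of $g_L^{-1}$: $\Gamma_{ij}^m = g^{mk}\Gamma_{ij,k}$.

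As a sample, for $(i, j) = (1, 2)$ a short calculation gives $\Gamma_{12,1} = \tfrac{1}{4} x_2 L$, $\Gamma_{12,2} = \tfrac{1}{4} x_1 L$, and $\Gamma_{12,3} = \tfrac{1}{2}\bigl(-\tfrac{L}{2} + \tfrac{L}{2} - 0\bigr) = 0$. Contracting with the rows of $g_L^{-1}$ then produces $\Gamma_{12}^1 = \tfrac{1}{4} x_2 L$, $\Gamma_{12}^2 = \tfrac{1}{4} x_1 L$, and $\Gamma_{12}^3 = \tfrac{1}{8}(x_1^2 - x_2^2)L$, in agreement with the statement. The remaining five pairs $(1,1), (1,3), (2,2), (2,3), (3,3)$ are handled identically.

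The only real obstacle is bookkeeping: eighteen lowered symbols and eighteen contractions to organize without arithmetic slips. To guard against errors I would perform the calculation twice, once in coordinates as above and once in the left-invariant orthonormal frame $\{X_1, X_2, \XtL\}$, where the Koszul identity collapses (because $[X_1, \XtL] = [X_2, \XtL] = 0$ and $[X_1, X_2] = \sqrt{L}\, \XtL$) to
\begin{equation*}
2\,\langle Z, \nabla_X Y\rangle_L \;=\; -\langle [X, Z], Y\rangle_L - \langle [Y, Z], X\rangle_L - \langle [X, Y], Z\rangle_L,
\end{equation*}
so that each $\nabla_{X_i}X_j$ is produced from the single nonzero structure constant $\sqrt{L}$. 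Translating the resulting formulas back into the $\{e_1, e_2, e_3\}$ basis via the change-of-frame relations already recorded in the paper, and comparing with the coordinate computation, confirms the tables (\ref{eq:G1}), (\ref{eq:G2}), (\ref{eq:G3}).
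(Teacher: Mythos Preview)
Your proposal is correct and takes essentially the same approach as the paper: the paper's proof consists of a single sentence invoking the classical coordinate formula $\Gamma_{ij}^{m} = \tfrac{1}{2}\,g^{km}(\partial_i g_{jk} + \partial_j g_{ki} - \partial_k g_{ij})$, which is exactly your main line of argument. Your additional cross-check via the orthonormal frame $\{X_1,X_2,\XtL\}$ is a reasonable safeguard but is not needed for the proof itself (and in fact the paper carries out that frame computation separately in the very next lemma).
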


\begin{proof}
It is a direct computation using
$$\Gamma_{ij}^{m} = \dfrac{1}{2}\sum_{k=1}^{3} \left\{ \dfrac{\partial}{\partial_{x_i}}g_{jk}
+ \dfrac{\partial}{\partial_{x_j}}g_{ki} -\dfrac{\partial}{\partial_{x_k}}g_{ij} \right\}g^{km},$$
for $i,j,m=1,2,3$.
\end{proof}

We now compute the Levi-Civita connection $\nabla$ associated to the Riemannian metric $g_L$.

\begin{lemma}
The action of the Levi-Civita connection $\nabla$ of $(\reals^{3}, g_L)$ on the vectors $X_1$, $X_2$
and $\XtL$ is given by
\begin{equation*}
\begin{aligned}
\nabla_{X_1}X_2 &= -\nabla_{X_2}X_1 = \frac12 {X_3},  \\
\nabla_{X_1}\XtL &= \nabla_{\XtL}X_1 = -\frac12 {\sqrt{L}} \, X_2,  \\
\nabla_{X_2}\XtL &= \nabla_{\XtL}X_2 = \frac12 {\sqrt{L}} \, X_1.
\end{aligned}
\end{equation*}
\end{lemma}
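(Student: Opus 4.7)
The natural strategy is to use the Koszul identity \eqref{eq:Koszul} directly on the basis $\{X_1,X_2,X_3^L\}$, rather than going through the previously computed Christoffel symbols in the standard basis $\{e_1,e_2,e_3\}$ (which is possible but would force us to reconvert at the end). The reason this is efficient is that $\{X_1,X_2,X_3^L\}$ is by definition $\langle\cdot,\cdot\rangle_L$-orthonormal, so every inner product $\langle X_i,X_j\rangle_L$, $\langle X_i,X_3^L\rangle_L$, $\langle X_3^L,X_3^L\rangle_L$ is a constant, and the first three (derivative) terms on the right-hand side of \eqref{eq:Koszul} vanish identically.

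The plan, then, is the following. First I would record the Lie brackets of the chosen frame: from $[X_1,X_2]=X_3$ and $X_3^L=X_3/\sqrt L$, we get
\begin{equation*}
[X_1,X_2]=\sqrt L\,X_3^L,\qquad [X_1,X_3^L]=[X_2,X_3^L]=0,
\end{equation*}
the latter two because $X_3=\partial_{x_3}$ is central in $\mathfrak h$. With these, the Koszul identity collapses to
\begin{equation*}
\langle Z,\nabla_X Y\rangle_L
=-\tfrac12\bigl(\langle [X,Z],Y\rangle_L+\langle [Y,Z],X\rangle_L+\langle [X,Y],Z\rangle_L\bigr)
\end{equation*}
(up to the sign convention for the bracket term). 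Next I would evaluate this on each of the three relevant pairs $(X,Y)\in\{(X_1,X_2),(X_1,X_3^L),(X_2,X_3^L)\}$, testing against each $Z\in\{X_1,X_2,X_3^L\}$, and read off the coefficients of $\nabla_X Y$ in the orthonormal frame.

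Concretely, for $(X,Y)=(X_1,X_2)$ only the test vector $Z=X_3^L$ produces a nonzero contribution, namely from $\langle [X_1,X_2],X_3^L\rangle_L=\sqrt L$; this gives $\nabla_{X_1}X_2=\tfrac12\sqrt L\,X_3^L=\tfrac12 X_3$. For $(X,Y)=(X_1,X_3^L)$ the only surviving term comes from $\langle [X_1,X_2],X_3^L\rangle_L$ again, appearing with the opposite sign when we take $Z=X_2$, yielding the coefficient $-\tfrac12\sqrt L$ on $X_2$; the $X_1$ and $X_3^L$ components vanish. The case $(X_2,X_3^L)$ is symmetric and gives the coefficient $+\tfrac12\sqrt L$ on $X_1$. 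This produces all six formulas in the statement except for the two symmetry assertions $\nabla_{X_i}X_3^L=\nabla_{X_3^L}X_i$, $i=1,2$, which then follow at once from the torsion-free identity $\nabla_XY-\nabla_YX=[X,Y]$ together with $[X_i,X_3^L]=0$.

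There is no real obstacle — the computation is essentially bookkeeping — but the one place to be careful is the rescaling factor. The bracket $[X_1,X_2]$ is most naturally written as $X_3$, and one must remember to express it as $\sqrt L\,X_3^L$ before pairing it with members of the $g_L$-orthonormal frame; conversely, the final answers for $\nabla_{X_1}X_2$ should be re-expressed in terms of $X_3$ (not $X_3^L$) to match the form of the statement. This single $\sqrt L$ is what will carry the $L$-dependence into all subsequent curvature computations.
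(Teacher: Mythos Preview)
Your approach is essentially identical to the paper's: the paper's proof is a one-liner that records exactly the same simplified Koszul identity you wrote down (with the derivative terms dropped by orthonormality) and states that the result follows. Your proposal simply spells out the case-by-case evaluation that the paper leaves implicit, so there is nothing to add.
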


\begin{proof}
It follows from a direct application of the Koszul identity \eqref{eq:Koszul}, which here simplifies to
$$
\scal{Z}{\nabla_{X}Y}_{L} = -\dfrac{1}{2} \biggl( \scal{[X,Z]}{Y}_{L} + \scal{[Y,Z]}{X}_{L} + \scal{[X,Y]}{Z}_{L}\biggr).
$$
\end{proof}

To make the paper self-contained, we recall here the definitions of Riemann curvature tensor $R$ and of sectional curvature.

\begin{definition}
The Riemann curvature tensor $R$ of a
Riemannian manifold $M$ is a mapping
$R(X,Y): \vef \to \vef$ defined as follows
$$R(X,Y)Z := \nabla_{Y}\nabla_{X}Z - \nabla_{X}\nabla_{Y}Z + \nabla_{[X,Y]}Z, \quad Z \in \vef.$$
\end{definition}

\begin{remark}\label{Rfunct}
 Note that the Riemann curvature tensor $R$ satisfies the functional property
 \begin{equation}\label{eq:functional}
R(fX,Y)Z = R(X,fY)Z = R(X,Y)(fZ) = f R(X,Y)Z,
 \end{equation}
 for every $X,Y,Z \in \vef$ and every $f \in \mathcal{D}(M)$.
\end{remark}

\begin{definition}\label{sectional}
Let $M$ be a Riemannian manifold and let
$\Pi \subset T_{p}M$ be a two-dimensional subspace of the tangent
space $T_{p}M$. Let $\{E_{1}, E_{2}\}$ be two linearly independent vectors in $\Pi$.
The sectional curvature $K(E_{1},E_{2})$ of $M$ is defined as
$$K(E_{1}, E_{2}) := \dfrac{\scal{R(E_{1},E_{2})E_{1}}{E_{2}}}{|E_{1}\wedge E_{2}|^{2}},$$
 where $\wedge$ denotes the usual wedge product.
\end{definition}

One of the main reasons to introduce the notion of affine connection,
is to be able to differentiate smooth vector fields along curves:
this operation is known as \textit{covariant differentiation} (see \cite{dC}, Chapter 2).
Formally, let $Z = z_1 e_1 + z_2 e_2 + z_3 e_3$  be a smooth vector field
(written in the standard basis of $\mathbb{R}^{3}$)
along a curve $\gamma = \gamma(t)$.
The covariant derivative of $Z$ along the curve $\gamma$
is given by
\begin{equation*}
D_{t}Z = \sum_{m=1}^{3} \left\{ \dfrac{d z_m}{dt}
+ \sum_{i,j=1}^{3}\Gamma_{ij}^{m}z_{j} \dfrac{d x_i}{dt}\right\} e_m,
\end{equation*}
 where the $x_i$'s are the coordinates of $\gamma$ in a local chart
and $\Gamma_{ij}^{m}$ are the Christoffel symbols introduced before.

In particular, if $Z = \dot{\gamma}$, we have
\begin{equation}\label{eq:DCov}
D_{t}\dot{\gamma} = \sum_{m=1}^{3} \left\{ \ddot{\gamma}_{m}
+ \sum_{i,j=1}^{3} \Gamma_{ij}^{m} \dot{\gamma}_{i}\dot{\gamma}_{j}\right\}e_m .
\end{equation}

\section{Riemannian approximation of curvature of curves}\label{C1}
Let us define the objects we are going to study in this section.

\begin{definition}
Let $\gamma :[a,b] \rightarrow (\reals^{3},g_{L})$ be
a Euclidean $C^{1}$-smooth curve.
We say that $\gamma$ is regular if
$$\dot{\gamma}(t) \neq 0, \quad \textrm{for every $t \in [a,b]$}.$$
Moreover, we say that $\gamma(t)$ is a horizontal point of $\gamma$ if
$$\omega(\dot{\gamma}(t))= \dot{\gamma}_{3}(t) - \dfrac{1}{2}
\left( \gamma_{1}(t)\dot{\gamma}_{2}(t)-
\gamma_{2}(t)\dot{\gamma}_{1}(t)\right) = 0.$$
\end{definition}

\begin{definition}
Let $\gamma : [a,b]\subset \reals \rightarrow (\mathbb{R}^{3}, g_L)$
 be a Euclidean $C^{2}$-smooth regular curve in
the Riemannian manifold $(\mathbb{R}^{3}, g_L)$.
The curvature $k_{\gamma}^{L}$
of $\gamma$ at $\gamma(t)$ is defined as
\begin{equation}\label{eq:LCurv}
k_{\gamma}^{L} := \sqrt{\dfrac{\|D_t \cu\|_{L}^{2}}{\|\cu\|_{L}^{4}}
- \dfrac{\scal{D_t \cu}{{\cu}}_{L}^{2}}{\|\cu\|_{L}^{6}}} .
\end{equation}
\end{definition}

We stress that the above definition is well posed, indeed by Cauchy-Schwarz,
\begin{equation*}
 \dfrac{\|D_t \cu\|_{L}^{2}}{\|\cu\|_{L}^{4}} - \dfrac{\scal{D_t \cu}{{\cu}}_{L}^{2}}{\|\cu\|_{L}^{6}}
 \geq \dfrac{\|\cu\|_{L}^{2}\|D_t \cu\|_{L}^{2} - \|\cu\|_{L}^{2}\|D_t \cu\|_{L}^{2}}{\|\cu\|_{L}^{6}} = 0.
\end{equation*}

\begin{remark}
We recall that in Riemannian geometry
the standard definition of curvature for a curve
$\gamma$ parametrized by arc length is
$k_{\gamma}^{L} := \|D_{t}\cu\|_L$. The one we gave before is
just more practical to perform computations for curves with an
arbitrary parametrization.
\end{remark}

Let us briefly recall the definition of $g_{L}$-geodesics, cf.\ \cite[Chapter 2]{CDPT}. For
a Euclidean $C^{2}$-smooth regular curve $\gamma:[a,b] \to (\reals^{3},g_{L})$, define
its \textit{penalized energy functional} $E_{L}$ to be
$$
E_{L}(\gamma):= \int_{a}^{b}\left( |\dot{\gamma}_{1}(t)|^{2}
+ |\dot{\gamma}_{2}(t)|^{2} + L \left| \omega(\dot{\gamma}(t))|\right|^{2}\right) \, dt.
$$
Using a standard variational argument, we can derive
the system of Euler-Lagrange equations for the functional $E_{L}$:
we will call $g_L$-geodesics the critical points, which are actually curves,
of the functional $E_{L}$.
In other words, we will say that $\gamma$ is a $g_L$-geodesic, if for
every $t \in [a,b]$ it holds that
\begin{equation}\label{eq:gLEuler}
\left \{
\begin{array}{l}
 \ddot{\gamma}_{1}(t)=- L \dot{\gamma}_{2}(t) \omega(\dot{\gamma}(t)) , \\
\ddot{\gamma}_{2}(t)= L \dot{\gamma}_{1}(t) \omega(\dot{\gamma}(t)),\\
(\omega(\dot{\gamma}(t))|_{\gamma(t)})'=0.
\end{array}\right.\end{equation}

We are now ready to present the first result concerning the curvature $k_{\gamma}^{L}$.

\begin{lemma}\label{kL}
Let $\gamma : [a,b] \rightarrow (\mathbb{R}^{3}, g_L)$
be a Euclidean $C^{2}$-smooth regular curve in
the Riemannian manifold $(\mathbb{R}^{3}, g_L)$. Then
\begin{equation}\label{eq:kL}
 k_{\gamma}^{L} = \sqrt{\dfrac{(\ddot{\gamma}_{1}+ L \dot{\gamma}_2 \omega(\dot{\gamma}))^{2}
+ (\ddot{\gamma}_{1}- L \dot{\gamma}_1 \omega(\dot{\gamma}))^{2}
+ L \omega({\ddot{\gamma}})^{2}}{(\dot{\gamma}_{1}^{2}+\dot{\gamma}_{2}^{2}
+L\omega(\dot{\gamma})^{2})^{2}}
- \dfrac{(\ddot{\gamma}_1 \dot{\gamma}_1 + \ddot{\gamma}_2 \dot{\gamma}_2
+ L \omega(\dot{\gamma}) \omega(\ddot{\gamma}))^{2}}{(\dot{\gamma}_{1}^{2}
+\dot{\gamma}_{2}^{2}+L\omega(\dot{\gamma})^{2})^{3}}}.
\end{equation}
In particular, if $\gamma(t)$ is a horizontal point of $\gamma$,
\begin{equation}\label{eq:kL2}
k_{\gamma}^{L}
= \sqrt{\dfrac{\ddot{\gamma}_{1}^{2}
+ \ddot{\gamma}_{1}^{2}}{(\dot{\gamma}_{1}^{2}+\dot{\gamma}_{2}^{2})^{2}}
- \dfrac{(\ddot{\gamma}_1 \dot{\gamma}_1 + \ddot{\gamma}_2 \dot{\gamma}_2 )^{2}}{(\dot{\gamma}_{1}^{2}+\dot{\gamma}_{2}^{2})^{3}}}
= \frac{|\ddot{\gamma}_2 \dot{\gamma}_1 - \ddot{\gamma}_1 \dot{\gamma}_2|}{(\dot{\gamma}_{1}^{2}+\dot{\gamma}_{2}^{2})^{3/2}}.
\end{equation}
\end{lemma}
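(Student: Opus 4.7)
The plan is to compute $k_\gamma^L$ by expressing both $\dot\gamma$ and its covariant derivative $D_t\dot\gamma$ in the $g_L$-orthonormal frame $\{X_1, X_2, \XtL\}$, which is more efficient than working directly with the Christoffel symbols in the standard basis $\{e_1,e_2,e_3\}$. Using the change-of-basis relations stated earlier, a short computation yields
$$
\dot\gamma = \dot\gamma_1\,X_1 + \dot\gamma_2\,X_2 + \sqrt{L}\,\omega(\dot\gamma)\,\XtL,
$$
so that $\|\dot\gamma\|_L^2 = \dot\gamma_1^2 + \dot\gamma_2^2 + L\,\omega(\dot\gamma)^2$ by orthonormality.

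Next I compute $D_t\dot\gamma = \nabla_{\dot\gamma}\dot\gamma$ in the same frame. Setting $a = \dot\gamma_1$, $b = \dot\gamma_2$, $c = \sqrt{L}\,\omega(\dot\gamma)$, the Leibniz rule gives
$$
D_t\dot\gamma = \dot a\,X_1 + \dot b\,X_2 + \dot c\,\XtL + a\nabla_{\dot\gamma}X_1 + b\nabla_{\dot\gamma}X_2 + c\nabla_{\dot\gamma}\XtL.
$$
The six off-diagonal covariant derivatives $\nabla_{X_i}X_j$ are supplied by the preceding lemma; the three diagonal ones $\nabla_{X_i}X_i$ vanish by the Koszul identity \eqref{eq:Koszul} together with the centrality of $X_3$ in $\mathfrak{h}$ (note $[X_1,X_3]=[X_2,X_3]=0$). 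In the resulting expansion the $\XtL$-contributions $\pm\tfrac{ab\sqrt{L}}{2}$ cancel while the $X_1$- and $X_2$-contributions double, producing
$$
D_t\dot\gamma = \bigl(\ddot\gamma_1 + L\dot\gamma_2\omega(\dot\gamma)\bigr)X_1 + \bigl(\ddot\gamma_2 - L\dot\gamma_1\omega(\dot\gamma)\bigr)X_2 + \sqrt{L}\,\omega(\ddot\gamma)\,\XtL,
$$
where I used $\dot c = \sqrt{L}\,\tfrac{d}{dt}\omega(\dot\gamma) = \sqrt{L}\,\omega(\ddot\gamma)$, an identity obtained by direct differentiation of $\omega(\dot\gamma) = \dot\gamma_3 - \tfrac{1}{2}(\gamma_1\dot\gamma_2-\gamma_2\dot\gamma_1)$.

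From here the two quantities entering \eqref{eq:LCurv} can be read off directly: $\|D_t\dot\gamma\|_L^2$ is the sum of the squares of the three coefficients above, while $\langle D_t\dot\gamma, \dot\gamma\rangle_L = \dot\gamma_1\ddot\gamma_1 + \dot\gamma_2\ddot\gamma_2 + L\omega(\dot\gamma)\omega(\ddot\gamma)$ once the cross terms $\pm L\dot\gamma_1\dot\gamma_2\omega(\dot\gamma)$ cancel. Substituting into the definition yields \eqref{eq:kL}. For \eqref{eq:kL2}, setting $\omega(\dot\gamma) = 0$ (and observing that for horizontal curves the residual $L\omega(\ddot\gamma)^2$ also vanishes, since $\omega(\ddot\gamma) = \tfrac{d}{dt}\omega(\dot\gamma)$), followed by the Lagrange identity
$$
\bigl(\ddot\gamma_1^2+\ddot\gamma_2^2\bigr)\bigl(\dot\gamma_1^2+\dot\gamma_2^2\bigr) - \bigl(\ddot\gamma_1\dot\gamma_1+\ddot\gamma_2\dot\gamma_2\bigr)^2 = \bigl(\ddot\gamma_1\dot\gamma_2-\ddot\gamma_2\dot\gamma_1\bigr)^2,
$$
collapses the square root to the stated form. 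The computation is mechanical; the only mildly delicate point is verifying that the $\XtL$-component of $D_t\dot\gamma$ organizes itself into exactly $\sqrt{L}\,\omega(\ddot\gamma)$, and this cancellation is precisely what allows the final expressions to take such a compact form in terms of the contact one-form $\omega$.
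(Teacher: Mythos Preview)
Your proof is correct and reaches the same key identity \eqref{eq:DC} for $D_t\dot\gamma$ in the orthonormal frame as the paper does. The only difference is computational routing: the paper first computes $D_t\dot\gamma$ component-by-component in the standard basis $\{e_1,e_2,e_3\}$ via the Christoffel symbols \eqref{eq:G1}--\eqref{eq:G3} and then converts to $\{X_1,X_2,\XtL\}$, whereas you work directly in the orthonormal frame using the connection values $\nabla_{X_i}X_j$ from the preceding lemma. Your route is slightly more economical since it bypasses the standard-basis step entirely, but the two arguments are otherwise identical in structure.

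One small point worth flagging: in the horizontal case you correctly observe that dropping the $L\omega(\ddot\gamma)^2$ term requires $\omega(\ddot\gamma)=\tfrac{d}{dt}\omega(\dot\gamma)=0$, which follows only if $\omega(\dot\gamma)$ vanishes on an interval, not merely at the single point $t$. The paper makes the same tacit assumption (and does so again later, e.g.\ in Lemma~\ref{NonHor}), so your handling matches theirs; just be aware that the phrase ``horizontal point'' in the lemma's statement is being read as ``point of a horizontal curve'' for \eqref{eq:kL2} to hold as written.
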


\begin{proof}
We first compute the covariant derivative of $\dot{\gamma}$ as in \eqref{eq:DCov},
using \eqref{eq:G1}, \eqref{eq:G2} and \eqref{eq:G3}. In components, with respect
to the standard basis of $\mathbb{R}^{3}$, we have
\begin{equation}
\begin{aligned}
(D_t \dot{\gamma})_{1} &= \ddot{\gamma}_{1} + \dfrac{\gamma_2 L}{2} \dot{\gamma}_{1} \dot{\gamma}_{2}- \dfrac{\gamma_1 L}{2} \dot{\gamma}_{2}^{2} +L \dot{\gamma}_{3} \dot{\gamma}_{2}
= \ddot{\gamma}_{1}+ L \dot{\gamma}_2 \omega(\dot{\gamma}),\\
(D_t \dot{\gamma})_{2} &= \ddot{\gamma}_{2} - \dfrac{\gamma_2 L}{2} \dot{\gamma}_{1}^{2} + \dfrac{\gamma_1 L}{2}  \dot{\gamma}_{1}\dot{\gamma}_{2}-L \dot{\gamma}_{3} \dot{\gamma}_{1}
= \ddot{\gamma}_{2}- L \dot{\gamma}_1 \omega(\dot{\gamma}),\\
(D_t \dot{\gamma})_{3} &= \ddot{\gamma}_{3} - \dfrac{\gamma_1 \gamma_2 L}{4} \dot{\gamma}_{1}^{2} + \dfrac{(\gamma_{1}^{2}-\gamma_{2}^{2})L}{4}\dot{\gamma}_{1}\dot{\gamma}_{2} - \dfrac{\gamma_1 L}{2}  \dot{\gamma}_{1}\dot{\gamma}_{3}
+\dfrac{\gamma_1 \gamma_2 L}{4} \dot{\gamma}_{2}^{2} - \dfrac{\gamma_2 L}{2}  \dot{\gamma}_{2}\dot{\gamma}_{3}.
\end{aligned}\end{equation}
Now we express $D_t \dot{\gamma}$ in the basis $\{X_1,X_2,\XtL \}$:
\begin{equation}\label{eq:DC}
D_{t}\dot{\gamma} =
\bigl( \ddot{\gamma}_{1}+ L \dot{\gamma}_2 \omega(\dot{\gamma}) \bigr) \, X_1
+
\bigl( \ddot{\gamma}_{2}- L \dot{\gamma}_1 \omega(\dot{\gamma}) \bigr) \, X_2
+
\left( \sqrt{L} \omega({\ddot{\gamma}})\right) \, \XtL,
\end{equation}
where
$$
\omega(\ddot{\gamma}(t)) = \ddot{\gamma}_{3}(t) - \dfrac{1}{2}\left( \gamma_{1}(t)\ddot{\gamma}_{2}(t) - \gamma_{2}(t)\ddot{\gamma}_{1}(t)\right),
$$
coincides with the expression $(\omega(\dot{\gamma}))'$ in \eqref{eq:gLEuler}.
Recalling that
\begin{equation}\label{dot-gamma-in-coordinates}
\dot\gamma = \dot\gamma_1 \, X_1 + \dot\gamma_2 \, X_2 + (\sqrt{L}\omega(\dot\gamma)) \, \XtL,
\end{equation}
we compute
$$
\scal{D_t \dot{\gamma}}{\dot{\gamma}}_{L} = \ddot{\gamma}_1 \dot{\gamma}_1 + \ddot{\gamma}_2 \dot{\gamma}_2 + L \omega(\dot{\gamma}) \omega(\ddot{\gamma}).
$$
Therefore
\begin{equation}\label{eq:Curvature}
  k_{\gamma}^{L} = \sqrt{\dfrac{(\ddot{\gamma}_{1}+ L \dot{\gamma}_2 \omega(\dot{\gamma}))^{2}
+ (\ddot{\gamma}_{2}- L \dot{\gamma}_1 \omega(\dot{\gamma}))^{2} + L \omega({\ddot{\gamma}})^{2}}{(\dot{\gamma}_{1}^{2}+\dot{\gamma}_{2}^{2}+L\omega(\dot{\gamma})^{2})^{2}}
- \dfrac{(\ddot{\gamma}_1 \dot{\gamma}_1 + \ddot{\gamma}_2 \dot{\gamma}_2 + L \omega(\dot{\gamma}) \omega(\ddot{\gamma}))^{2}}{(\dot{\gamma}_{1}^{2}+\dot{\gamma}_{2}^{2}+L\omega(\dot{\gamma})^{2})^{3}}}.
\end{equation}
On the other hand, if $\gamma(t)$ is a horizontal point for $\gamma$, then $\omega(\dot{\gamma}(t))=0$ and
$$
D_{t}\dot{\gamma} = \ddot\gamma_1 \, X_1 + \ddot\gamma_2 \, X_2.
$$
It follows that
$$
k_{\gamma}^{L} = \sqrt{\dfrac{\ddot{\gamma}_{1}^{2}
+ \ddot{\gamma}_{1}^{2}}{(\dot{\gamma}_{1}^{2}+\dot{\gamma}_{2}^{2})^{2}}
- \dfrac{(\ddot{\gamma}_1 \dot{\gamma}_1 + \ddot{\gamma}_2 \dot{\gamma}_2 )^{2}}{(\dot{\gamma}_{1}^{2}+\dot{\gamma}_{2}^{2})^{3}}},
$$
as desired.
\end{proof}

\begin{remark}\label{Remark}
The pointwise notion of curvature provided by (\ref{eq:kL}) is continuous along the
curve $\gamma$. Moreover, we want to stress that this notion is independent of
the parametrization chosen.
\end{remark}

We now use the previous results to study curvatures of curves in the Heisenberg group $\heis$.

\begin{definition}\label{def:sRc}
Let $\gamma : [a, b] \rightarrow \heis$ be a Euclidean $C^{2}$-smooth regular curve.
We define the sub-Riemannian curvature
$k_{\gamma}^{0}$ of $\gamma$ at $\gamma(t)$ to be
$$
k_{\gamma}^{0} := \lim_{L \rightarrow +\infty} k_{\gamma}^{L},
$$
if the limit exists.
\end{definition}

It is clear that, a priori, the above limit may not exist.
In order to deal with the asymptotics as $L \to +\infty$ of
the quantities involved, let us introduce the
following notation: for continuous functions $f,g: (0, +\infty) \to \reals$,
\begin{equation}\label{eq:SIM}
f(L) \sim g(L), \quad \textrm{as $L \to +\infty$} \quad \stackrel{def}{\Longleftrightarrow}  \quad
\lim_{L \to +\infty} \dfrac{f(L)}{g(L)}=1.
\end{equation}

\begin{lemma}\label{k0}
Let $\gamma : [a, b] \rightarrow \heis$ be a Euclidean $C^{2}$-smooth regular curve. Then
\begin{equation}\label{eq:k0}
k_{\gamma}^{0} =
\left \{
\begin{array}{rl}
\dfrac{|\cuo \ddot{\gamma}_2 - \cut \ddot{\gamma}_1|}{\sqrt{(\cuo^2 + \cut^2)^{3}}}, &  \mbox{if $\gamma(t)$ is a horizontal point of $\gamma$,}  \\
\dfrac{\sqrt{\cuo^{2}+ \cut^{2}}}{|\omega(\dot{\gamma})|} , & \mbox{if $\gamma(t)$ is not a horizontal point of $\gamma$.}
\end{array}\right.
\end{equation}
\end{lemma}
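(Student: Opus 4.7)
The approach is to pass to the limit $L \to +\infty$ in the closed-form expression for $k_\gamma^L$ established in Lemma~\ref{kL}, handling separately the two cases distinguished by whether $\omega(\cu(t)) = 0$. In the horizontal case, formula \eqref{eq:kL2} already yields an expression for $k_\gamma^L$ that is manifestly independent of $L$, so the limit equals the expression itself and immediately reproduces the first branch of \eqref{eq:k0}.

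In the non-horizontal case, $\alpha := \omega(\cu(t)) \neq 0$. Setting $A := \cuo^2 + \cut^2$ and $\beta := \omega(\ddot{\gamma})$ to compress notation, the first fraction inside the square root in \eqref{eq:kL} expands as
$$
\frac{L^2 \alpha^2 A \,+\, L\bigl( 2\alpha(\cut \ddot{\gamma}_1 - \cuo \ddot{\gamma}_2) + \beta^2 \bigr) \,+\, (\ddot{\gamma}_1^2 + \ddot{\gamma}_2^2)}{(A + L\alpha^2)^2},
$$
whose numerator grows like $L^2 \alpha^2 A$ and whose denominator grows like $L^2 \alpha^4$; hence this fraction tends to $A/\alpha^2$ as $L \to +\infty$. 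The second fraction has numerator $(\cuo\ddot{\gamma}_1 + \cut\ddot{\gamma}_2 + L\alpha\beta)^2 = O(L^2)$ and denominator $(A + L\alpha^2)^3 \sim L^3 \alpha^6$, so it is of order $O(1/L)$ and tends to $0$. Therefore the quantity under the square root in \eqref{eq:kL} converges to $A/\alpha^2 > 0$, and by continuity of $\sqrt{\cdot}$ on $[0,+\infty)$ (applied to the nonnegative quantity guaranteed by the Cauchy-Schwarz observation following \eqref{eq:LCurv}) we conclude $k_\gamma^0 = \sqrt{A}/|\alpha|$, matching the second branch of \eqref{eq:k0}.

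The main obstacle here is purely computational bookkeeping: expanding the squares in \eqref{eq:kL} and carefully tracking the leading power of $L$ in each monomial, in order to confirm that the $L^2$-coefficient of the numerator of the first fraction is exactly $\alpha^2 A$, that the subtracted fraction is of lower order, and that no cancellation occurs at leading order. Because $\alpha \neq 0$ at a non-horizontal point, the denominators $(A + L\alpha^2)^k$ remain bounded away from zero for large $L$, so no indeterminate forms arise, and the strict positivity of the limit value $A/\alpha^2$ makes the final square-root extraction automatic.
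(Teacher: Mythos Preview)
Your proof is correct and follows essentially the same approach as the paper: in the horizontal case you use that \eqref{eq:kL2} is $L$-independent, and in the non-horizontal case you extract the leading $L$-behavior of the two fractions in \eqref{eq:kL}, finding that the first tends to $(\cuo^2+\cut^2)/\omega(\cu)^2$ while the second is $O(1/L)$. The only cosmetic difference is that the paper phrases the asymptotics in terms of the individual pieces $\|D_t\cu\|_L$, $\|\cu\|_L$, and $\scal{D_t\cu}{\cu}_L$ using the $\sim$ notation, whereas you expand the combined expression directly; the content is identical.
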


\begin{proof}
The first result follows from the fact that the
expression (\ref{eq:kL2}) for the curvature at horizontal points
in $(\mathbb{R}^{3}, g_L)$, is independent of $L$. For the other case, we need to study the asymptotics
in $L$. Using the notation introduced in \eqref{eq:SIM}, we have
\begin{align*}
\|D_t \cu\|_{L} \sim L  \sqrt{(\cuo^{2}+ \cut^{2})} |\cf|, \quad \mbox{as $L \to +\infty$},\\
\|\cu \|_{L} \sim \sqrt{L} \,|\cf|, \quad \mbox{as $L \to +\infty$},\\
\scal{D_t \cu}{\cu}_{L} \sim L \,\cf \, \omega(\ddot{\gamma}), \quad \mbox{as $L \to +\infty$}.
\end{align*}
Therefore
\begin{align*}
 \dfrac{\|D_t \cu \|_{L}^{2}}{\|\cu\|_{L}^{4}} &\to \dfrac{\cuo^{2}+ \cut^{2}}{\cf^{2}}, \quad \mbox{as $L \to +\infty$},\\
\dfrac{\scal{D_t \cu}{\cu}_{L}^{2}}{\|\cu\|_{L}^{6}} &\sim \dfrac{L^{2} \cf^{2} \omega(\ddot{\gamma})^{2}}{L^{3} \cf^{6}} \to 0, \quad \mbox{as $L \to +\infty$}.
\end{align*}
Altogether,
$$k_{\gamma}^{0} = \lim_{L \to +\infty} k_{\gamma}^{L}
= \dfrac{\sqrt{\cuo^{2}+ \cut^{2}}}{|\omega(\dot{\gamma})|},$$
as desired.
\end{proof}

\begin{remark}
We stress that the $g_L$-length of a curve can, in general, blow
up, once we take the limit as $L \to +\infty$.
The remarkable fact provided by Lemma \ref{k0} is that this never occurs
for the curvature $k_{\gamma}^{L}$. We notice
that, as in Remark \ref{Remark}, the sub-Riemannian
curvature $k_{\gamma}^{0}$ is independent of the parametrization. In the horizontal case, the quantity $k_\gamma^0$ is the absolute value of the {\it signed horizontal curvature} of a horizontal curve which arises in the study of horizontal mean curvature. See Remark \ref{horiz-mean-curve-remark} for more details.
\end{remark}

There is another fact to notice.
If we consider a Euclidean $C^{2}$-smooth regular
curve $\gamma :[a,b] \to \heis$, which is
partially horizontal and partially not, the quantity $k_{\gamma}^{0}(t)$ in
\eqref{eq:k0}, need not be a continuous function (in contrast to Remark \ref{Remark}).
Moreover, in view of the independence of $k_{\gamma}^{0}$ of the parametrization,
when we approach a horizontal point of $\gamma$ from non-horizontal points of $\gamma$, we always find a singularity.
Let us clarify the last sentences with an example.

\begin{example}
 Consider the planar curve
 $$\gamma(\theta):= \left( \cos(\theta)+1, \sin(\theta),0\right), \quad \theta \in [0,2\pi).$$
 The curve $\gamma$ is horizontal only for $\theta = \pi$, indeed
 $\omega(\cu)= -\tfrac{1+\cos(\theta)}{2}$, which vanishes only for $\theta = \pi$. The
 horizontal curvature $k_{\gamma}^{0}$ is a pointwise notion, therefore we have
 \begin{equation*}
  k_{\gamma}^{0}(\theta)|_{\theta=\pi} =
  \sqrt{\dfrac{\ddot{\gamma}_{1}^{2}
+ \ddot{\gamma}_{1}^{2}}{(\dot{\gamma}_{1}^{2}+\dot{\gamma}_{2}^{2})^{2}}
- \dfrac{(\ddot{\gamma}_1 \dot{\gamma}_1 + \ddot{\gamma}_2 \dot{\gamma}_2 )^{2}}{(\dot{\gamma}_{1}^{2}+\dot{\gamma}_{2}^{2})^{3}}} \bigg|_{\theta = \pi} = 1,
 \end{equation*}
 and for $\theta \in [0,2\pi)\setminus \{\pi\}$,
\begin{equation*}
k_{\gamma}^{0}(\theta)= \dfrac{\sqrt{\cuo^{2}(\theta)+ \cut^{2}(\theta)}}{|\omega(\dot{\gamma}(\theta))|} = \dfrac{2}{|1+\cos(\theta)|} \to +\infty, \quad \textrm{as $\theta \to \pi$}.
\end{equation*}
\end{example}

In the classical theory of differential geometry of smooth space curves in $\reals^{3}$,
there is a famous \textit{rigidity theorem} (see for instance \cite{dC76}) stating that
every curve is characterized by its curvature and its torsion, up to rigid motions.
Similar questions are addressed in \cite{CL13} and \cite{CL15} but with a different approach,
viewing the first Heisenberg group $\heis$ as flat 3-dimensional manifold carrying a pseudo-hermitian
structure.

We now have a notion of \textit{horizontal curvature} for Euclidean $C^{2}$-smooth regular
curves in $(\heis ,d_{cc})$. A first result in the direction of the classical rigidity
theorem is the following.

\begin{proposition}\label{curvature-isometry-proposition}
Let $\gamma :[a,b]\to (\heis,d_{cc})$ be a Euclidean $C^{2}$-smooth regular
curve. The sub-Riemannian curvature $k_{\gamma}^{0}$ of $\gamma$ is invariant
under left translation and rotations around the $x_3$-axis of $\gamma$.
\end{proposition}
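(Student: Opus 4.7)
The plan is to prove invariance by tracking how the three ingredients appearing in the formula \eqref{eq:k0}, namely $\dot\gamma_1^2 + \dot\gamma_2^2$, the signed planar area $\dot\gamma_1 \ddot\gamma_2 - \dot\gamma_2 \ddot\gamma_1$, and the contact quantity $\omega(\dot\gamma)$, transform under each of the two types of maps. Once these pieces are seen to be preserved (or merely permuted in a norm-preserving way), the explicit expression gives invariance in each branch of \eqref{eq:k0}. As a parallel check, both operations will turn out to be isometries of every Riemannian approximant $(\reals^3, g_L)$, so the preservation of $k_\gamma^L$ and hence of $k_\gamma^0$ also follows directly from Definition \ref{def:sRc}.

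For a left translation $L_q(p) = q \ast p$, the vector fields $X_1, X_2, X_3$ are by construction left-invariant, so the frame $\{X_1, X_2, X_3/\sqrt{L}\}$ is pushed to itself and $L_q$ is automatically a $g_L$-isometry. At the level of coordinates, if $\beta = L_q \circ \gamma$, then differentiating the group law immediately gives $\dot\beta_i = \dot\gamma_i$ and $\ddot\beta_i = \ddot\gamma_i$ for $i = 1, 2$, while a short algebraic cancellation in $\omega(\dot\beta) = \dot\beta_3 - \tfrac12(\beta_1 \dot\beta_2 - \beta_2 \dot\beta_1)$ shows that the $q$-dependent contributions drop out, leaving $\omega(\dot\beta) = \omega(\dot\gamma)$. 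Inserting these equalities into either branch of \eqref{eq:k0} yields $k_\beta^0 = k_\gamma^0$.

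For a rotation $R_\theta(x_1, x_2, x_3) = (\cos\theta\, x_1 - \sin\theta\, x_2,\, \sin\theta\, x_1 + \cos\theta\, x_2,\, x_3)$, the pivotal identity is $R_\theta^* \omega = \omega$, which comes from a direct expansion in which the cross-terms cancel via $\cos^2\theta + \sin^2\theta = 1$. If $\beta = R_\theta \circ \gamma$, then $(\dot\beta_1, \dot\beta_2)$ is the planar rotation of $(\dot\gamma_1, \dot\gamma_2)$ by angle $\theta$, and similarly for $(\ddot\beta_1, \ddot\beta_2)$. Since $R_\theta$ preserves the Euclidean norm and has determinant one, both $\dot\gamma_1^2 + \dot\gamma_2^2$ and $\dot\gamma_1 \ddot\gamma_2 - \dot\gamma_2 \ddot\gamma_1$ are invariant, and combining with $\omega(\dot\beta) = \omega(\dot\gamma)$ the formula \eqref{eq:k0} once again gives $k_\beta^0 = k_\gamma^0$. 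The isometry statement for $g_L$ follows from the same identities together with $\|\dot\gamma\|_{g_L}^2 = \dot\gamma_1^2 + \dot\gamma_2^2 + L\,\omega(\dot\gamma)^2$ from \eqref{dot-gamma-in-coordinates}.

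There is no genuine obstacle to this proof; the only subtle point is making sure that the two branches of \eqref{eq:k0} are matched correctly under the transformations. But since horizontality of a point is exactly the vanishing of $\omega(\dot\gamma)$, and $\omega(\dot\gamma)$ is preserved in both cases, the correct branch is automatically selected at $\beta(t)$ whenever it is selected at $\gamma(t)$, and the argument is complete.
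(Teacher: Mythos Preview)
Your proof is correct and follows essentially the same approach as the paper: both verify invariance by checking directly that the ingredients of the explicit formula \eqref{eq:k0} are preserved under left translations and rotations. Your version is in fact a bit more thorough than the paper's, since you explicitly verify the invariance of $\omega(\dot\gamma)$ (needed for the non-horizontal branch) and address the matching of the two cases, and you add the conceptual observation that both maps are $g_L$-isometries, which gives an alternative one-line justification via Definition~\ref{def:sRc}.
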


\begin{proof}
Fix a point $g = (g_1, g_2,g_3) \in \heis$. Define the curve $\tilde{\gamma}$ as
the left translation by $g$ of $\gamma$,
$$
\tilde{\gamma}(t):= L_{g}(\gamma(t)) = \left(\gamma_1(t)+ g_1, \gamma_2(t) + g_2, \gamma_3(t)+g_3 - \dfrac{1}{2}(\gamma_1(t) g_2 - \gamma_2(t) g_1)\right), \quad t\in [a,b].
$$
It is clear that
$$
\dot{\tilde{\gamma}}_{i}=\dot{\gamma}_{i} \quad \mbox{and} \quad \ddot{\tilde{\gamma}}_{i}=\ddot{\gamma}_{i}, \quad i=1,2,
$$
and therefore $k_{\gamma}^{0} = k_{\tilde{\gamma}}^{0}$.

For the second assertion, fix an angle $\theta \in [0,2\pi)$ and define the curve $\bar{\gamma}$ as the
rotation by $\theta$ of $\gamma$ around the $x_3$-axis,
$$
\bar{\gamma}(t):= \left(\gamma_1(t)\cos(\theta)+ \gamma_2(t)\sin(\theta), -\gamma_{1}\sin(\theta)+\gamma_{2}\cos(\theta), \gamma_3 \right), \quad t\in [a,b].
$$
An easy computation shows that in this case we have
$$
\dot{\bar{\gamma}}_{1}^{2}+\dot{\bar{\gamma}}_{2}^{2}=\dot{\gamma}_{1}^{2} +\dot{\gamma}_{2}^{2}, \quad
\ddot{\bar{\gamma}}_{1}^{2}+\ddot{\bar{\gamma}}_{2}^{2}=\ddot{\gamma}_{1}^{2} +\ddot{\gamma}_{2}^{2}, \quad
\dot{\bar{\gamma}}_{1}\ddot{\bar{\gamma}}_{1}+\dot{\bar{\gamma}}_{2}\ddot{\bar{\gamma}}_{2} = \dot{\gamma}_{1}\ddot{\gamma}_{1}+\dot{\gamma}_{2}\ddot{\gamma}_{2},
$$
and therefore $k_{\gamma}^{0} = k_{\bar{\gamma}}^{0}$.
\end{proof}

\begin{remark}\label{curvature-dilation-remark}
The behavior of the curvature $k_\gamma^0$ under dilations is as follows: if $\gamma$ is a $C^2$ smooth  regular curve and $r>0$, then
$$
k_{\delta_r\gamma}^0 = \frac1r k_\gamma^0.
$$
Here $\delta_r\gamma$ denotes the curve $(r\gamma_1,r\gamma_2,r^2\gamma_3)$ when $\gamma=(\gamma_1,\gamma_2,\gamma_3)$.
\end{remark}

\section{Riemannian approximation of geodesic curvature of curves on surfaces}\label{C2}
In order to prove a Heisenberg version of the Gauss--Bonnet Theorem,
we need the concept of \textit{sub-Riemannian signed geodesic curvature}.
This section will be devoted to the study of curvature of curves living on surfaces.

Let us fix once for all the assumptions we will make on the surface $\Sigma$ in this and the coming section.
We will say that a surface $\Sigma \subset (\reals^{3},g_{L})$, or $\Sigma \subset \heis$, is \textbf{regular} if
\begin{equation}\label{eq:Sigma}
\Sigma \quad \textrm{is a Euclidean $C^{2}$-smooth compact and oriented surface}.
\end{equation}
In particular we will assume that there
exists a Euclidean $C^{2}$-smooth function $u:\reals^{3}\to \reals$ such that
$$
\Sigma = \{(x_1,x_2,x_3)\in \reals^{3}: u(x_1,x_2,x_3)=0\}
$$
and $\nabla_{\reals^{3}}u \neq 0$.
As in \cite[Section 4.2]{CDPT}, our study will be local and away from characteristic points of $\Sigma$. For completeness, we
recall that a point $x \in \Sigma$ is called \textit{characteristic} if
\begin{equation}\label{eq:char}
\hgrad u(x) = (0,0).
\end{equation}
The presence or absence of characteristic points will be stated explicitly.

To fix notation (following the one adopted in \cite[Chapter 4]{CDPT}), let us define first
$$
p:= X_1 u, \quad q:= X_2 u \quad  \textrm{and} \quad r:= X_{3}^{L} u.
$$
We then define
\begin{equation}\label{eq:pq}
\begin{aligned}
&l:=\|\hgrad u\|_{\he}, \quad \bar{p}:= \dfrac{p}{l} \quad \textrm{and} \quad \bar{q}:= \dfrac{q}{l}, \\
&l_{L}:= \sqrt{(X_{1}u)^{2}+(X_{2}u)^{2}+(\XtL u)^{2}}, \quad \bar{r}_L:= \dfrac{r}{l_{L}},\\
&\bar{p}_{L}:= \dfrac{p}{l_L} \quad \mbox{and} \quad \bar{q}_{L}:= \dfrac{q}{l_L}.
\end{aligned}
\end{equation}
In particular, $\bar{p}^{2}+\bar{q}^{2}=1$. It is clear from \eqref{eq:char} that
these functions are well defined at every non-characteristic point of $\Sigma$.

\begin{definition}
Let $\Sigma \subset (\mathbb{R}^{3}, g_{L})$ be a regular surface and let $u: \mathbb{R}^{3} \to \mathbb{R}$
be as before. The Riemannian unit normal $\nu_{L}$ to $\Sigma$ is
$$\nu_{L} := \dfrac{\grad_{L}u}{\|\grad_{L}u\|_{L}} = \bar{p}_{L}X_1 + \bar{q}_{L}X_2 + \bar{r}_{L}X_{3}^{L},$$
where $\grad_{L}u$ is the Riemannian gradient of $u$.
\end{definition}

\begin{definition}\label{E1E2}
Let $\Sigma \subset (\mathbb{R}^{3}, g_{L})$ be a regular surface
and let $u: \mathbb{R}^{3} \to \mathbb{R}$ be as before.
For every point $g \in \Sigma$, we introduce the orthonormal basis
$\{E_{1}(g), E_{2}(g)\}$ for $T_g\Sigma$, where
$$
E_{1}(g) :=   \bar{q}(g) \, X_{1}(g) - \bar{p}(g) \, X_{2}(g)
$$
and
$$
E_{2}(g) := \bar{r}_{L}(g) \, \bar{p}(g) \, X_{1}(g) +
\bar{r}_{L}(g) \, \bar{q}(g)\, X_{2}(g) - \dfrac{l}{l_L}(g) \, \XtL(g) .$$
On $T_g \Sigma$ we define a linear transformation $J_L : T_g \Sigma \to T_g \Sigma$ such that
\begin{equation}\label{eq:DefJ}
J_L (E_{1}(g)) := E_{2}(g), \quad \mbox{and} \quad J_L(E_{2}(g)):= -E_{1}(g).
\end{equation}
In the following we will omit the dependence on the point $g \in \Sigma$ when it is clear.
\end{definition}

Let us spend a few words on the choice of the basis of the tangent plane $T_{g}\Sigma$.
The vector $E_1$ is a horizontal vector given by $J(\hgrad u / \|\hgrad u\|_H)$, where $J$ is the linear operator acting on horizontal vector fields by $J(aX_1+bX_2) = bX_1-aX_2$.
This vector is called by some authors
\textit{characteristic direction} and plays an important role in the study of minimal surfaces
in $\heis$ and in the study
of the properties of the characteristic set $\mathrm{char}(\Sigma)$,
see for instance \cite{CHMY05}, \cite{CHMY12} and \cite{CH04}.

In order to perform computations on the surface $\Sigma$, we need
a Riemannian metric and a connection.
Classically, see for instance \cite{L} or \cite{dC},
there is a standard way to define a Riemannian metric
$g_{L, \Sigma}$ on $\Sigma$ so that $\Sigma$ is
isometrically immersed in $(\mathbb{R}^{3}, g_L)$.
Once we have a Riemannian metric on the surface $\Sigma$,
we can define the unique Levi-Civita connection $\CoSi$ on $\Sigma$
related to the Riemannian metric $g_{L, \Sigma}$.
This procedure is equivalent to the following definition.

\begin{definition}
Let $\Sigma \subset (\mathbb{R}^{3}, g_L)$ be a Euclidean $C^{2}$-smooth surface.
For every $U,V \in T_{g}\Sigma$
we define $\CoSi_{U}V$ to be the tangential component of $\nabla$, namely
$$\CoSi_{U}V = \Pi \left(\nabla_{U}V\right),$$
where $\Pi: \mathbb{R}^{3} \rightarrow T\Sigma$.
\end{definition}

Note that to compute $\nabla_{U}V$ we are considering an extension of both $U$ and $V$ to $\mathbb{R}^{3}$.

As a notational remark, given a
Euclidean $C^{2}$-smooth and regular curve $\gamma \subset \Sigma$,
we will denote the covariant derivative of $\cu$
with respect to the $g_{L,\Sigma}$-metric,
as $D_{t}^{\Sigma} \cu$.

We are now interested in detecting the curvature of a
Euclidean $C^{2}$-smooth regular curve $\gamma$
on a given regular surface $\Sigma \subset \heis$.

We start with a technical lemma that will simplify our treatment in the ensuing discussion.

\begin{lemma}\label{limits}
For $\bar{p}$, $\bar{q}$, $l_{L}$ and $\bar{r}_L$ as before, we have
\begin{align}
l_{L} \to \|\hgrad u\|, & \quad \mbox{as $L \to +\infty$},\\
\bar{r}_{L} \to 0,& \quad \mbox{as $L \to +\infty$}\label{eq:2lim},\\
\dfrac{\bar{r}_{L}}{l_{L}} \to 0,& \quad \mbox{as $L \to +\infty$}, \\
\dfrac{\sqrt{L} \, \bar{r}_{L}}{l_{L}} \to \dfrac{X_3 u}{\|\hgrad u\|^{2}},& \quad \mbox{as $L \to +\infty$}, \\
L \,\bar{r}_{L}^{2} \to  \dfrac{(X_3 u)^{2}}{\|\hgrad u\|^{2}},& \quad \mbox{as $L \to +\infty$},\\
\bar{r}_{L}L \sim  \dfrac{\sqrt{L}(X_{3}u)}{\|\hgrad u\|},& \quad \mbox{as $L \to +\infty$}\label{eq:6lim}
\end{align}
\end{lemma}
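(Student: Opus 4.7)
The plan is to reduce every identity to the explicit expression
$$
l_L^2 = p^2 + q^2 + r^2 = (X_1u)^2+(X_2u)^2+\frac{(X_3u)^2}{L}
$$
and then let $L\to+\infty$. Since the statement is implicitly made at a non-characteristic point of $\Sigma$, we have $\|\hgrad u\|_{\he}^2=p^2+q^2>0$, so all denominators that appear below are positive for $L$ large (in fact for every $L>0$). No limit in the list is problematic individually; the only role of the hypothesis is that $\|\hgrad u\|_{\he}\neq 0$.

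First I would prove (1): from the display above,
$$
l_L^2 - \|\hgrad u\|_{\he}^2 = \frac{(X_3u)^2}{L}\xrightarrow[L\to+\infty]{}0,
$$
and taking square roots gives $l_L\to\|\hgrad u\|_{\he}$. Next, to handle $\bar r_L$ and its rescalings, I would carry out the key algebraic simplification
$$
\bar r_L = \frac{r}{l_L}=\frac{X_3u}{\sqrt{L}\,l_L},
$$
which makes every remaining limit transparent. Indeed, (2) is immediate because the numerator tends to $0$ and $l_L$ tends to a positive constant; (3) follows a fortiori since dividing by $l_L$ once more does not change the behavior. For (4) the factor $\sqrt{L}$ cancels the one in the denominator of $\bar r_L$, yielding
$$
\frac{\sqrt{L}\,\bar r_L}{l_L}=\frac{X_3u}{l_L^{\,2}}\xrightarrow[L\to+\infty]{}\frac{X_3u}{\|\hgrad u\|_{\he}^2}.
$$
For (5) a parallel cancellation gives
$$
L\,\bar r_L^{\,2}=\frac{(X_3u)^2}{l_L^{\,2}}\xrightarrow[L\to+\infty]{}\frac{(X_3u)^2}{\|\hgrad u\|_{\he}^2},
$$
and for (6) one writes
$$
L\,\bar r_L=\frac{\sqrt{L}\,(X_3u)}{l_L},
$$
so that dividing by $\sqrt{L}(X_3u)/\|\hgrad u\|_{\he}$ produces the ratio $\|\hgrad u\|_{\he}/l_L\to 1$, which is exactly the statement $L\bar r_L\sim \sqrt{L}(X_3u)/\|\hgrad u\|_{\he}$ in the notation of \eqref{eq:SIM}.

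There is no genuine obstacle here: the lemma is a bookkeeping device that collects, in one place, the rate at which each combination of $\bar r_L$, $l_L$ and powers of $L$ stabilises. The only mild subtlety to flag is that (6) is phrased as an equivalence rather than a limit, precisely because $L\bar r_L$ itself diverges while its ratio with $\sqrt{L}(X_3u)/\|\hgrad u\|_{\he}$ converges to $1$; this explains why the statement is formulated via $\sim$ instead of a finite limit. With the identity $\bar r_L = X_3u/(\sqrt{L}\,l_L)$ in hand, each of the six claims is a one-line consequence of (1).
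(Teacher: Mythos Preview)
Your proposal is correct and follows exactly the same approach as the paper, which simply states that all the limits and asymptotics follow directly from the definitions in \eqref{eq:pq}; you have merely made explicit the one-line computations that the paper leaves to the reader.
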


\begin{proof}
All the limits and asymptotics follow directly from the definitions in (\ref{eq:pq}).
\end{proof}

\begin{lemma}\label{NonHor}
Let $\gamma: [a,b] \rightarrow \Sigma$ be a Euclidean $C^{2}$-smooth, regular and non-horizontal curve.
The covariant derivative $D^{\Sigma}_t \cu$ of $\cu$
with respect to the $g_{L, \Sigma}$-metric is given by
\begin{equation*}
 D^{\Sigma}_t \cu = (D^{\Sigma}_t \cu)_1 E_1 + (D^{\Sigma}_t \cu)_2 E_2,
\end{equation*}
 where
\begin{equation}\label{eq:DEi}
\begin{aligned}
(D^{\Sigma}_t \cu)_1 &= \bar{q} \left(\ddot{\gamma}_1 + L \omega(\cu) \cut \right)- \bar{p}\left( \ddot{\gamma}_2 - L\omega(\cu) \cuo \right),\\
(D^{\Sigma}_t \cu)_2 &= \bar{r}_L \bar{p} \left(\ddot{\gamma}_1 + L \omega(\cu) \cut \right)+ \bar{r}_L \bar{q}\left( \ddot{\gamma}_2 - L\omega(\cu) \cuo \right) - \dfrac{l}{l_L}\sqrt{L} \omega(\ddot{\gamma}).
\end{aligned}
\end{equation}
Moreover, if $\gamma: [a,b] \rightarrow \Sigma$ is Euclidean $C^{2}$-smooth, regular and horizontal curve,
we have
\begin{equation}\label{eq:DEii}
 D^{\Sigma}_t \cu = \left(\bar{q} \ddot{\gamma}_1- \bar{p} \ddot{\gamma}_2 \right) E_1 + \bar{r}_L \, \left(\bar{p} \ddot{\gamma}_1 + \bar{q}\ddot{\gamma}_2\right) E_2,
\end{equation}
\end{lemma}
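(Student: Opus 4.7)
The plan is straightforward: since $\nabla^{\Sigma}$ is the tangential projection of the ambient Levi--Civita connection $\nabla$ of $(\reals^{3},g_{L})$, the covariant derivative $D_{t}^{\Sigma}\dot{\gamma}$ is simply the orthogonal projection (with respect to $g_{L}$) of the ambient covariant derivative $D_{t}\dot{\gamma}$ onto $T_{\gamma(t)}\Sigma$. Because $\{E_{1},E_{2}\}$ is a $g_{L}$-orthonormal basis of $T\Sigma$ by construction in Definition \ref{E1E2}, this projection can be read off as
\[
D_{t}^{\Sigma}\dot{\gamma} \;=\; \scal{D_{t}\dot{\gamma}}{E_{1}}_{L}\,E_{1} \;+\; \scal{D_{t}\dot{\gamma}}{E_{2}}_{L}\,E_{2}.
\]
So the entire computation reduces to pairing the right-hand side of \eqref{eq:DC} with $E_{1}$ and $E_{2}$ in the orthonormal frame $\{X_{1},X_{2},\XtL\}$.

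First I would rewrite \eqref{eq:DC} as $D_{t}\dot{\gamma}=A\,X_{1}+B\,X_{2}+C\,\XtL$ with
\[
A=\ddot{\gamma}_{1}+L\,\dot{\gamma}_{2}\,\omega(\dot{\gamma}),\quad B=\ddot{\gamma}_{2}-L\,\dot{\gamma}_{1}\,\omega(\dot{\gamma}),\quad C=\sqrt{L}\,\omega(\ddot{\gamma}).
\]
Then, using the expressions $E_{1}=\bar{q}X_{1}-\bar{p}X_{2}$ and $E_{2}=\bar{r}_{L}\bar{p}X_{1}+\bar{r}_{L}\bar{q}X_{2}-\tfrac{l}{l_{L}}\XtL$ from Definition \ref{E1E2} together with orthonormality of $\{X_{1},X_{2},\XtL\}$, the inner products become
\[
\scal{D_{t}\dot{\gamma}}{E_{1}}_{L}=\bar{q}A-\bar{p}B,\qquad \scal{D_{t}\dot{\gamma}}{E_{2}}_{L}=\bar{r}_{L}\bar{p}A+\bar{r}_{L}\bar{q}B-\tfrac{l}{l_{L}}C,
\]
which, after substituting the values of $A$, $B$, $C$, yields exactly \eqref{eq:DEi}. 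There is nothing subtle here; it is a bookkeeping exercise.

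For the horizontal case in \eqref{eq:DEii} the only nontrivial observation is that the $\XtL$-component of $D_{t}\dot{\gamma}$ vanishes identically along a horizontal curve. Since $\omega(\dot{\gamma})=\dot{\gamma}_{3}-\tfrac{1}{2}(\gamma_{1}\dot{\gamma}_{2}-\gamma_{2}\dot{\gamma}_{1})\equiv 0$ along $\gamma$, differentiating once in $t$ gives
\[
\frac{d}{dt}\omega(\dot\gamma)=\ddot\gamma_{3}-\tfrac{1}{2}\bigl(\gamma_{1}\ddot\gamma_{2}-\gamma_{2}\ddot\gamma_{1}\bigr)=\omega(\ddot\gamma),
\]
because the cross terms $\dot\gamma_{1}\dot\gamma_{2}-\dot\gamma_{2}\dot\gamma_{1}$ cancel. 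Hence $\omega(\ddot\gamma)\equiv 0$ on $\gamma$, so $C=0$, $A=\ddot\gamma_{1}$, $B=\ddot\gamma_{2}$, and the two projections collapse to $\bar{q}\ddot\gamma_{1}-\bar{p}\ddot\gamma_{2}$ and $\bar{r}_{L}(\bar{p}\ddot\gamma_{1}+\bar{q}\ddot\gamma_{2})$ as claimed.

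Since the main structural ingredient, namely $\nabla^{\Sigma}=\Pi\circ\nabla$, is already available from the preceding discussion, and since the horizontal simplification rests on the one-line identity $\omega(\ddot\gamma)\equiv 0$ on horizontal curves, there is essentially no obstacle; the only point worth flagging is checking that the basis $\{E_{1},E_{2}\}$ really is $g_{L}$-orthonormal (which follows from $\bar{p}^{2}+\bar{q}^{2}=1$ and $\bar{r}_{L}^{2}+(l/l_{L})^{2}=1$) so that the projection formula through inner products is valid.
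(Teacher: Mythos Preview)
Your proof is correct and follows exactly the same route as the paper: project the ambient covariant derivative $D_t\dot\gamma$ from \eqref{eq:DC} onto $T_{\gamma(t)}\Sigma$ via the $g_L$-orthonormal frame $\{E_1,E_2\}$, then specialize by setting $\omega(\dot\gamma)=\omega(\ddot\gamma)=0$ in the horizontal case. If anything you are slightly more explicit than the paper, since you justify the vanishing of $\omega(\ddot\gamma)$ by differentiating $\omega(\dot\gamma)\equiv 0$ and you flag the orthonormality check $\bar r_L^{2}+(l/l_L)^{2}=1$; the paper simply asserts both without comment.
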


\begin{proof}
The covariant derivative $D_t \cu$ in the $\{X_1,X_2,X_{3}^{L}\}$ basis is as in (\ref{eq:DC}). Projecting $D_t\cu$ via $\Pi$ inon the tangent plane $T_g \Sigma$, we find
\begin{equation*}
\begin{aligned}
 D^{\Sigma}_t \cu :&= \scal{D_t \cu}{E_1}_L E_1 + \scal{D_t \cu}{E_2}_L E_2 \\
]&= \left[ \bar{q} \left(\ddot{\gamma}_1 + L \omega(\cu) \cut \right)- \bar{p}\left( \ddot{\gamma}_2 - L\omega(\cu) \cuo \right)\right) E_1 \\
& \quad + \left[ \bar{r}_L \bar{p} \left(\ddot{\gamma}_1 + L \omega(\cu) \cut \right)+ \bar{r}_L \bar{q}\left( \ddot{\gamma}_2 - L\omega(\cu) \cuo \right) - \dfrac{l}{l_L}\sqrt{L} \omega(\ddot{\gamma})\right] E_2.
\end{aligned}\end{equation*}
The situation is much simpler if $\gamma$ is horizontal, indeed
it suffices to set $\omega(\cu) = \omega(\ddot{\gamma})=0$ in the
previous expression.
\end{proof}

\begin{definition}
Let $\Sigma \subset (\mathbb{R}^{3}, g_L)$ be a regular surface.
Let $\gamma : [a, b] \rightarrow \Sigma$ be
a Euclidean $C^{2}$-smooth and regular curve, and
let $D_{t}^{\Sigma}$ be the covariant derivative of $\cu$
with respect to the Riemannian metric $g_{L,\Sigma}$.
The geodesic curvature $k_{\gamma, \Sigma}^{L}$ of
$\gamma$ at the point $\gamma(t)$ is defined to be
$$
k_{\gamma, \Sigma}^{L} := \sqrt{ \dfrac{\| D^{\Sigma}_t \cu \|^{2}_{L,\Sigma}}{\|\cu\|_{L,\Sigma}^{4}} - \dfrac{\scal{D^{\Sigma}_t \cu}{{\cu}}^{2}_{L,\Sigma}}{\|\cu\|_{L,\Sigma}^{6}}}.
$$
\end{definition}

\begin{definition}
Let $\Sigma \subset \heis$ be a regular surface and let
$\gamma : [a, b] \rightarrow \Sigma$ be a Euclidean $C^{2}$-smooth and regular curve.
The \textit{sub-Riemannian geodesic curvature} $k_{\gamma, \Sigma}^{0}$ of
$\gamma$ at the point $\gamma(t)$ is defined as
$$
k_{\gamma, \Sigma}^{0}:= \lim_{L \rightarrow +\infty} k_{\gamma, \Sigma}^{L},
$$
if the limit exists.
\end{definition}

As in Section \ref{C1}, our first task is to show that the above limit does always exist.

\begin{lemma}\label{GeoCur}
Let $\Sigma \subset \heis$ be a regular surface and let $\gamma : [a, b] \rightarrow \Sigma$ be a
Euclidean $C^{2}$-smooth and regular curve. Then
$$
k_{\gamma, \Sigma}^{0} = \dfrac{|\bar{p} \cuo + \bar{q} \cut |}{|\omega(\cu)|}
$$
if $\gamma(t)$ is a non-horizontal point, while
$$
k_{\gamma, \Sigma}^{0} = 0
$$
if $\gamma(t)$ is a horizontal point.
\end{lemma}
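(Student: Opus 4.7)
The plan is to imitate the argument used in Lemma \ref{k0}, reducing the limit $\lim_{L\to\infty} k_{\gamma,\Sigma}^L$ to an asymptotic analysis in the orthonormal frame $\{E_1, E_2\}$ supplied by Definition \ref{E1E2}. Since $\{E_1, E_2\}$ is an orthonormal basis of $T_{\gamma(t)}\Sigma$ for the induced metric $g_{L,\Sigma}$, we have
\[
\|D_t^\Sigma \dot\gamma\|_{L,\Sigma}^2 = (D_t^\Sigma\dot\gamma)_1^2 + (D_t^\Sigma\dot\gamma)_2^2,
\]
with components read off from Lemma \ref{NonHor}. Similarly $\|\dot\gamma\|_{L,\Sigma}^2 = a_1^2 + a_2^2$ and $\langle D_t^\Sigma\dot\gamma,\dot\gamma\rangle_{L,\Sigma} = (D_t^\Sigma\dot\gamma)_1 a_1 + (D_t^\Sigma\dot\gamma)_2 a_2$, where $a_i = \langle \dot\gamma, E_i\rangle_L$ are computed using \eqref{dot-gamma-in-coordinates}; a direct expansion gives $a_1 = \bar q \dot\gamma_1 - \bar p \dot\gamma_2$ and $a_2 = \bar r_L(\bar p \dot\gamma_1 + \bar q \dot\gamma_2) - \frac{l\sqrt L}{l_L}\omega(\dot\gamma)$.

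The decisive algebraic input is the \emph{tangency identity}: differentiating $u(\gamma(t)) = 0$ and rewriting partial derivatives in terms of $X_1 u, X_2 u, X_3 u$ yields
\[
p\dot\gamma_1 + q\dot\gamma_2 + (X_3 u)\,\omega(\dot\gamma) = 0,
\]
equivalently $\bar p \dot\gamma_1 + \bar q \dot\gamma_2 = -\tfrac{X_3 u}{l}\,\omega(\dot\gamma)$. Plugging this into the formulas \eqref{eq:DEi} and into $a_2$ eliminates the apparent $L\omega(\dot\gamma)(\bar p\dot\gamma_1+\bar q\dot\gamma_2)$ term in $(D_t^\Sigma\dot\gamma)_1$, replacing it by a term of order $L\omega(\dot\gamma)^2$; similarly it organizes $a_2$ into a single multiple of $\omega(\dot\gamma)$. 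At a non-horizontal point, the asymptotics of Lemma \ref{limits} then give
\[
(D_t^\Sigma\dot\gamma)_1 \sim -L\omega(\dot\gamma)^2 \tfrac{X_3 u}{l},\qquad (D_t^\Sigma\dot\gamma)_2 = O(\sqrt L),\qquad a_1 = O(1),\qquad a_2 \sim -\sqrt L\,\omega(\dot\gamma),
\]
so that $\|\dot\gamma\|_{L,\Sigma}^4 \sim L^2 \omega(\dot\gamma)^4$ and $\|D_t^\Sigma\dot\gamma\|_{L,\Sigma}^2 \sim L^2 \omega(\dot\gamma)^4 (X_3 u/l)^2$. Hence
\[
\frac{\|D_t^\Sigma\dot\gamma\|_{L,\Sigma}^2}{\|\dot\gamma\|_{L,\Sigma}^4} \longrightarrow \frac{(X_3 u)^2}{\|\hgrad u\|_\he^2},
\]
while the cross-term is dominated: $\langle D_t^\Sigma\dot\gamma,\dot\gamma\rangle_{L,\Sigma}^2 = O(L^2)$ against $\|\dot\gamma\|_{L,\Sigma}^6 \sim L^3\omega(\dot\gamma)^6$, so that quotient vanishes. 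Taking the square root and rewriting $\tfrac{|X_3 u|}{l}$ via the tangency identity as $\tfrac{|\bar p\dot\gamma_1+\bar q\dot\gamma_2|}{|\omega(\dot\gamma)|}$ yields the desired formula.

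For the horizontal case, $\omega(\dot\gamma) \equiv 0$ reduces $(D_t^\Sigma\dot\gamma)_1$ to $\bar q\ddot\gamma_1 - \bar p \ddot\gamma_2$ and $(D_t^\Sigma\dot\gamma)_2$ to $\bar r_L(\bar p \ddot\gamma_1 + \bar q \ddot\gamma_2)$, while tangency reduces to $\bar p\dot\gamma_1 + \bar q\dot\gamma_2 = 0$ and $\|\dot\gamma\|_{L,\Sigma}^2 = \dot\gamma_1^2 + \dot\gamma_2^2$. Then $a_2 = 0$, $a_1^2 = \dot\gamma_1^2 + \dot\gamma_2^2$, and since $\bar r_L \to 0$ the curvature simplifies in the limit to
\[
(k_{\gamma,\Sigma}^0)^2 = \frac{(\bar q\ddot\gamma_1 - \bar p\ddot\gamma_2)^2}{(\dot\gamma_1^2+\dot\gamma_2^2)^2}\left(1 - \frac{(\bar q\dot\gamma_1-\bar p\dot\gamma_2)^2}{\dot\gamma_1^2+\dot\gamma_2^2}\right) = 0,
\]
by the Pythagorean identity $(\bar q\dot\gamma_1-\bar p\dot\gamma_2)^2 = \dot\gamma_1^2+\dot\gamma_2^2$. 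The main obstacle is purely bookkeeping: keeping track of the leading powers of $L$ in each of the four competing terms (in particular recognizing that the naively dominant $L^2$ contribution to $\langle D_t^\Sigma\dot\gamma,\dot\gamma\rangle^2/\|\dot\gamma\|^6$ is killed by the extra power of $L$ in the denominator, so that only the $\|D_t^\Sigma\dot\gamma\|^2/\|\dot\gamma\|^4$ term survives), and repeatedly invoking the tangency relation to trade $X_3 u$ for $\bar p\dot\gamma_1+\bar q\dot\gamma_2$ and vice versa.
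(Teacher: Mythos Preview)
Your proof is correct and follows essentially the same route as the paper: expand $D_t^\Sigma\dot\gamma$ and $\dot\gamma$ in the orthonormal frame $\{E_1,E_2\}$ via Lemma~\ref{NonHor}, then read off the leading powers of $L$ using Lemma~\ref{limits} to see that only the $\|D_t^\Sigma\dot\gamma\|^2/\|\dot\gamma\|^4$ term survives. Your one extra ingredient is the tangency identity $p\dot\gamma_1+q\dot\gamma_2+(X_3u)\,\omega(\dot\gamma)=0$; the paper never invokes it, simply leaving the leading term of $(D_t^\Sigma\dot\gamma)_1$ as $L\,\omega(\dot\gamma)(\bar p\dot\gamma_1+\bar q\dot\gamma_2)$ so that the answer comes out directly without the round-trip through $X_3u/l$, whereas in the horizontal case your use of tangency gives a slightly cleaner explanation of why $a_2=0$ and $a_1^2=\dot\gamma_1^2+\dot\gamma_2^2$.
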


\begin{proof}
First, let $\gamma(t)$ be a non-horizontal point of the curve $\gamma$.
From Lemma \ref{NonHor}, we know that
\begin{equation*}
\begin{aligned}
 D^{\Sigma}_t \cu &=\left( \bar{q} \left(\ddot{\gamma}_1 + L \omega(\cu) \cut \right)- \bar{p}\left( \ddot{\gamma}_2 - L\omega(\cu) \cuo \right)\right) E_1 \\
& \quad + \left( \bar{r}_L \bar{p} \left(\ddot{\gamma}_1 + L \omega(\cu) \cut \right)+ \bar{r}_L \bar{q}\left( \ddot{\gamma}_2 - L\omega(\cu) \cuo \right) - \dfrac{l}{l_L}\sqrt{L} \omega(\ddot{\gamma})\right) E_2.
\end{aligned}\end{equation*}
For $\cu$, recalling \eqref{dot-gamma-in-coordinates} we have
\begin{equation*}
\cu =\left( \bar{q} \cuo - \bar{p}\cut \right) E_1 + \left( \bar{r}_L \bar{p} \cuo + \bar{r}_L \bar{q} \cut - \dfrac{l}{l_L}\sqrt{L} \omega(\dot{\gamma})\right) E_2.
\end{equation*}
Recalling \eqref{eq:6lim}, we have
\begin{equation*}
 \begin{aligned}
\|D_{t}^{\Sigma}\cu\|_{L,\Sigma}^{2} &=  \left(\bar{q} \left(\ddot{\gamma}_1 + L \omega(\cu) \cut \right)- \bar{p}\left( \ddot{\gamma}_2 - L\omega(\cu) \cuo \right) \right)^{2} \\
& \quad + \left( \bar{r}_L \bar{p} \left(\ddot{\gamma}_1 + L \omega(\cu) \cut \right)+ \bar{r}_L \bar{q}\left( \ddot{\gamma}_2 - L\omega(\cu) \cuo \right) - \dfrac{l}{l_L}\sqrt{L} \omega(\ddot{\gamma})\right)^{2}\\
& \sim L^{2} \omega(\cu)^{2} (\bar{p}\cuo + \bar{q}\cut)^{2}, \quad \mbox{as $L \to +\infty$}.
 \end{aligned}
\end{equation*}
In a similar way, we have that
\begin{equation*}
 \begin{aligned}
\|\cu\|_{L,\Sigma} &=\sqrt{\left( \bar{q} \cuo - \bar{p}\cut \right)^{2} + \left( \bar{r}_L \bar{p} \cuo + \bar{r}_L \bar{q} \cut - \dfrac{l}{l_L}\sqrt{L} \omega(\dot{\gamma})\right)^{2}}\\
& \quad \sim \sqrt{L} |\omega(\cu)|, \quad \mbox{as $L \to +\infty$}.
 \end{aligned}
\end{equation*}
A bit more involved computation shows that
$$
\scal{D_{t}^{\Sigma}\cu}{\cu}_{L,\Sigma} \sim L\, M, \quad \mbox{as $L \to +\infty$},
$$
where $M$ does not depend on $L$.
Therefore, at a non-horizontal point $\gamma(t)$ , we have
$$
k_{\gamma,\Sigma}^{0} = \dfrac{|\bar{p} \cuo + \bar{q} \cut |}{|\omega(\cu)|}.
$$
Now let $\gamma(t)$ be a horizontal point of the curve $\gamma$. In this case,
\begin{align*}
D^{\Sigma}_t \cu &=\left(\bar{q}\ddot{\gamma}_1 - \bar{p} \ddot{\gamma}_2 \right) E_1 + \bar{r}_L  \left( \bar{p} \ddot{\gamma}_1 +\bar{q} \ddot{\gamma}_2\right) E_2,\\
\cu & =  \left( \bar{q} \cuo - \bar{p}\cut \right) E_1 + \bar{r}_L \, \left( \bar{p} \cuo + \bar{q} \cut \right) E_2.
\end{align*}
Recalling (\ref{eq:2lim}), we have
\begin{align*}
\|D_{t}^{\Sigma}\cu\|_{L,\Sigma}^{2} &\to \left(\bar{q}\ddot{\gamma}_1 - \bar{p} \ddot{\gamma}_2 \right)^{2}, \quad \mbox{as $L \to +\infty$,}\\
\|\cu\| _{L,\Sigma}^{2} &\to (\bar{q} \cuo - \bar{p}\cut)^{2} , \quad \mbox{as $L \to +\infty$,}\\
\scal{D_{t}^{\Sigma}\cu}{\cu}_{L,\Sigma} & \to (\bar{q}\cuo - \bar{p}\cut) (\bar{q} \ddot{\gamma}_1 -\bar{p} \ddot{\gamma}_2), \quad \mbox{as $L \to +\infty$.}
\end{align*}
Therefore
$$k_{\gamma,\Sigma}^{0}= \sqrt{\dfrac{\left(\bar{q}\ddot{\gamma}_1 - \bar{p} \ddot{\gamma}_2 \right)^{2}}{(\bar{q} \cuo - \bar{p}\cut)^{4}} - \dfrac{(\bar{q}\cuo - \bar{p}\cut)^{2} (\bar{q} \ddot{\gamma}_1 -\bar{p} \ddot{\gamma}_2)^{2}}{(\bar{q} \cuo - \bar{p}\cut)^{6}} } = 0,$$
as desired.
\end{proof}

\begin{remark}
 In the last computation, it is possible to divide by the term $\bar{q} \cuo - \bar{p}\cut$,
because it cannot be $0$ when $\gamma(t)$ is a horizontal point. Indeed, let $g \in \Sigma$ be
a non-characteristic point, then we know that
$$\dim (H_g \cap T_{g}\Sigma) =1.$$
At a horizontal point $\gamma(t)$, we have that
$\dot{\gamma} \in H_g \cup T_{g}\Sigma$. On the other hand, $E_1 \in H_g \cap T_{g}\Sigma$ as well.
Therefore, if $\bar{q} \cuo - \bar{p}\cut = 0$, this would imply that we would
have
$$\scal{\dot{\gamma}}{E_1}_{L} = 0,$$
 and therefore that either $\dim (H_g \cap T_{g}\Sigma) =2$ or $\dot{\gamma}=0$,
which would lead to a contradiction.
\end{remark}

For planar curves it is possible to define a notion of \textit{signed curvature}. Actually, it is possible to define an analogous concept
for curves living on arbitrary two-dimensional Riemannian manifolds (see for instance \cite[Chapter 9]{L}.

\begin{definition}
Let $\Sigma \subset (\mathbb{R}^{3}, g_L)$ be a regular surface and let $\gamma : [a,b] \rightarrow \Sigma$
be a Euclidean $C^{2}$-smooth and regular curve.
Let $\CoSi$ be the Levi-Civita connection on $\Sigma$
related to the Riemannian metric $g_{L,\Sigma}$.
For every $g \in \Sigma$, let $\{E_1(g),E_2(g)\}$ be an orthonormal basis of $T_g \Sigma$.
The signed geodesic curvature $k_{\gamma, \Sigma}^{L,s}$ of
$\gamma$ at the point $\gamma(t)$ is defined as
$$k_{\gamma, \Sigma}^{L,s} := \dfrac{\scal{D_{t}^{\Sigma}\cu}{J_{L}(\cu)}_{L,\Sigma}}{\|\cu\|_{L,\Sigma}^{3}},$$
where $J_{L}$ is the linear transformation on $T_g \Sigma$ defined in (\ref{eq:DefJ}).
\end{definition}

\begin{remark}
If we take the absolute value of $k_{\gamma, \Sigma}^{L,s}$,
we recover precisely $k_{\gamma, \Sigma}^{L}$.
Indeed, by definition of $J_L$, $\{ \cu/\|\cu\|_{L}, J_L(\cu)/\|\cu\|_{L}\}$ is
another orthonormal basis of $T_g \Sigma$ oriented as $\{E_{1}(g), E_{2}(g)\}$.
Since $D_{t}^{\Sigma}\cu$ is defined as the projection of $D_t \cu$ onto $T_g \Sigma$, we have
$$
D_{t}^{\Sigma}\cu = \scal{D_{t}^{\Sigma}\cu}{\dfrac{\cu}{\|\cu\|_{L}}}_L \dfrac{\cu}{\|\cu\|_{L}} + \scal{D_{t}^{\Sigma}\cu}{\dfrac{J_L(\cu)}{\|\cu\|_{L}}}_L \dfrac{J_L(\cu)}{\|\cu\|_{L}}.
$$
In particular,
$$
\dfrac{\|D_{t}^{\Sigma}\cu\|_{L}^{2}}{\|\cu\|_{L}^{4}} = \dfrac{\scal{D_{t}^{\Sigma}\cu}{\cu}_{L,\Sigma}^{2}}{\|\cu\|_{L}^{6}} + \dfrac{\scal{D_{t}^{\Sigma}\cu}{J_L(\cu)}_{L}^{2}}{\|\cu\|_{L}^{6}}
$$
and so
\begin{equation*}
|k_{\gamma, \Sigma}^{L,s}| = \dfrac{|\scal{D_{t}^{\Sigma}\cu}{J_{L}(\cu)}_{L}|}{\|\cu\|_{L}^{3}}
= \sqrt{\dfrac{\|D_{t}^{\Sigma}\cu\|_{L}^{2}}{\|\cu\|_{L}^{4}}-\dfrac{\scal{D_{t}^{\Sigma}\cu}{\cu}_{L}^{2}}{\|\cu\|_{L}^{6}}} = k_{\gamma, \Sigma}^{L}.
\end{equation*}
\end{remark}

\begin{definition}
Let $\Sigma \subset \heis$ be a regular surface and let $\gamma : [a, b] \rightarrow \Sigma$ be a Euclidean $C^{2}$-smooth and regular curve.
The \textit{sub-Riemannian signed geodesic curvature} $k_{\gamma, \Sigma}^{0,s}(t)$ of
$\gamma$ at the point $\gamma(t)$ is defined as
$$k_{\gamma, \Sigma}^{0,s} := \lim_{L \rightarrow +\infty} k_{\gamma, \Sigma}^{L,s}.$$
\end{definition}

Again, we need to show that the above limit actually exists.

\begin{proposition}\label{SGeoCur}
 Let $\Sigma \subset \heis$ be a regular surface and let $\gamma : [a,b] \rightarrow \Sigma$ be a Euclidean $C^{2}$-smooth and regular curve.
Then
$$
k_{\gamma, \Sigma}^{0,s} = \dfrac{\bar{p} \cuo + \bar{q} \cut}{|\omega(\cu)|}
$$
if $\gamma(t)$ is a non-horizontal point, and
$$
k_{\gamma, \Sigma}^{0,s} = 0
$$
if $\gamma(t)$ is a horizontal point.
\end{proposition}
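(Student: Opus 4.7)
The strategy is to compute $k_{\gamma,\Sigma}^{L,s}$ directly from its defining expression and then pass to the limit $L\to+\infty$, in the spirit of the proof of Lemma \ref{GeoCur}. All the necessary components of $D_t^\Sigma\cu$ and $\cu$ in the orthonormal frame $\{E_1,E_2\}$ have already been computed in Lemma \ref{NonHor}, so what remains is to assemble them, apply $J_L$, and track the leading powers of $L$.

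First I would write $\cu = a\,E_1 + b\,E_2$ with
\begin{equation*}
a = \bar{q}\cuo-\bar{p}\cut, \qquad b = \bar{r}_L(\bar{p}\cuo+\bar{q}\cut) - \tfrac{l}{l_L}\sqrt{L}\,\omega(\cu),
\end{equation*}
so that $J_L(\cu) = a\,E_2 - b\,E_1$. The definition of $k_{\gamma,\Sigma}^{L,s}$ then becomes
\begin{equation*}
k_{\gamma,\Sigma}^{L,s} = \frac{a\,(D_t^\Sigma\cu)_2 - b\,(D_t^\Sigma\cu)_1}{\|\cu\|_{L,\Sigma}^3},
\end{equation*}
with $(D_t^\Sigma\cu)_1$ and $(D_t^\Sigma\cu)_2$ given by \eqref{eq:DEi} in the non-horizontal case and by \eqref{eq:DEii} in the horizontal case.

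For the non-horizontal case, I would use Lemma \ref{limits} and the asymptotic $\|\cu\|_{L,\Sigma}\sim\sqrt{L}\,|\omega(\cu)|$ established in the proof of Lemma \ref{GeoCur}. The dominant contribution to the numerator comes from $-b\,(D_t^\Sigma\cu)_1$: indeed $b\sim -\sqrt{L}\,\omega(\cu)$, while \eqref{eq:DEi} rearranges to $(D_t^\Sigma\cu)_1 = \bar{q}\ddot\gamma_1-\bar{p}\ddot\gamma_2 + L\,\omega(\cu)(\bar{p}\cuo+\bar{q}\cut)\sim L\,\omega(\cu)(\bar{p}\cuo+\bar{q}\cut)$. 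Thus $-b\,(D_t^\Sigma\cu)_1 \sim L^{3/2}\,\omega(\cu)^2\,(\bar{p}\cuo+\bar{q}\cut)$. The remaining product $a\,(D_t^\Sigma\cu)_2$ is only $O(\sqrt{L})$, since $a$ is bounded and, by the asymptotic $\bar{r}_L\cdot L \sim \sqrt{L}\,(X_3u)/\|\hgrad u\|$, the two highest-order pieces of $(D_t^\Sigma\cu)_2$ are themselves of order $\sqrt{L}$. Dividing through by $\|\cu\|_{L,\Sigma}^3\sim L^{3/2}|\omega(\cu)|^3$ and using $\omega(\cu)^2=|\omega(\cu)|^2$ yields the asserted limit $(\bar{p}\cuo+\bar{q}\cut)/|\omega(\cu)|$.

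For the horizontal case $\omega(\cu)=0$, the simplified expressions \eqref{eq:DEii} apply and every term in the numerator $a(D_t^\Sigma\cu)_2 - b(D_t^\Sigma\cu)_1$ carries a factor of $\bar{r}_L$. A short algebraic expansion, using $\bar{p}^2+\bar{q}^2=1$, collapses the numerator to $\bar{r}_L(\cuo\ddot\gamma_2 - \cut\ddot\gamma_1)$. The denominator $\|\cu\|_{L,\Sigma}^3$ tends to the nonzero finite limit $|\bar{q}\cuo-\bar{p}\cut|^3$ (nonzero at a horizontal non-characteristic point, by the argument given in the remark following Lemma \ref{GeoCur}), while $\bar{r}_L\to 0$ by Lemma \ref{limits}, so $k_{\gamma,\Sigma}^{0,s}=0$.

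The main obstacle is purely bookkeeping: keeping track of which of the many $L$-dependent terms in the expansion of $\scal{D_t^\Sigma\cu}{J_L(\cu)}_{L,\Sigma}$ supplies the leading $L^{3/2}$ behaviour, and verifying that no cancellation destroys it. The sign introduced by $J_L$ couples the $-\sqrt{L}\,\omega(\cu)$ factor hidden in $b$ with the $L\,\omega(\cu)$ factor in $(D_t^\Sigma\cu)_1$ to produce a clean, non-cancelling contribution; this is precisely what turns the absolute value appearing in Lemma \ref{GeoCur} into the signed numerator predicted by the statement.
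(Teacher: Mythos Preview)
Your proof is correct and follows essentially the same approach as the paper: write $\cu$ and $D_t^\Sigma\cu$ in the $\{E_1,E_2\}$ frame using Lemma \ref{NonHor}, apply $J_L$, and identify the leading $L^{3/2}$ contribution in the numerator against $\|\cu\|_{L,\Sigma}^3\sim L^{3/2}|\omega(\cu)|^3$. Your organization via the single product $-b\,(D_t^\Sigma\cu)_1$ as the dominant term (with $a\,(D_t^\Sigma\cu)_2=O(\sqrt{L})$) is a slightly cleaner bookkeeping of the same computation the paper carries out by fully expanding $\scal{D_t^\Sigma\cu}{J_L(\cu)}_{L,\Sigma}$, and in the horizontal case your explicit evaluation of the bracket as $\bar r_L(\cuo\ddot\gamma_2-\cut\ddot\gamma_1)$ is in fact sharper than what the paper records.
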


\begin{proof}
 We already know that
\begin{equation*}
\begin{aligned}
 D^{\Sigma}_t \cu &= \left( \bar{q} \left(\ddot{\gamma}_1 + L \omega(\cu) \cut \right)- \bar{p}\left( \ddot{\gamma}_2 - L\omega(\cu) \cuo \right)\right) E_1 + \\
&\quad \left( \bar{r}_L \bar{p} \left(\ddot{\gamma}_1 + L \omega(\cu) \cut \right)+ \bar{r}_L \bar{q}\left( \ddot{\gamma}_2 - L\omega(\cu) \cuo \right) - \dfrac{l}{l_L}\sqrt{L} \omega(\ddot{\gamma})\right) E_2.
\end{aligned}\end{equation*}
 and
\begin{equation*}
\begin{aligned}
 \cu &= \left(\bar{q} \cuo - \bar{p}\cut \right) E_1 +
\left( \bar{r}_L \bar{p} \cuo + \bar{r}_L \bar{q}\cut - \dfrac{l}{l_L}\sqrt{L} \omega(\cu)\right) E_2,
\end{aligned}\end{equation*}
 and therefore, by definition of $J_L$,
\begin{equation*}
J_L(\cu) = -\left( \bar{r}_L \bar{p} \cuo + \bar{r}_L \bar{q}\cut - \dfrac{l}{l_L}\sqrt{L} \omega(\cu)\right)E_1 + \left(\bar{q} \cuo - \bar{p}\cut \right) E_2.
\end{equation*}
After some simplifications, we get
\begin{equation*}
 \begin{aligned}
\scal{D_{t}^{\Sigma}\cu}{J_{L}(\cu)}_{L,\Sigma} &= \bar{r}_{L}\cuo \left( \ddot{\gamma}_2 - L \omega(\cu)\cut\right)(\bar{p}^{2}+\bar{q}^{2})
 -\bar{r}_{L}\cut \left( \ddot{\gamma}_1 + L \omega(\cu)\cuo \right)(\bar{p}^{2}+\bar{q}^{2})\\
&\quad  \dfrac{\sqrt{L} \, l}{l_L} \left[ \omega(\cu)\left( \bar{q}\ddot{\gamma}_1 + L\bar{q}\omega(\cu)\cut -\bar{p}\ddot{\gamma}_2
+ L \bar{p}\omega(\cu)\cuo\right) +\bar{q}\omega(\ddot{\gamma})\cuo - \bar{p}\omega(\ddot{\gamma})\cut \right]
 \end{aligned}
\end{equation*}
Therefore, exploiting Lemma \ref{limits}, for a non-horizontal curve $\gamma$, we have
$$\scal{D_{t}^{\Sigma}\cu}{J_{L}(\cu)}_{L,\Sigma} \sim L^{3/2} \omega(\cu)^{2} \left( \bar{q}\cut + \bar{p}\cuo \right), \quad \mbox{as $L \to +\infty$}.$$
Recalling that $\|\cu\|_{L,\Sigma} \sim \sqrt{L}|\omega(\cu)|$, as $L \to +\infty$, we find
$$\dfrac{\scal{D_{t}^{\Sigma}\cu}{J_{L}(\cu)}_{L,\Sigma}}{\|\cu\|_{L,\Sigma}^{3}} \to \dfrac{\left( \bar{p}\cuo+ \bar{q}\cut  \right)}{|\omega(\cu)|}, \quad \mbox{as $L \to +\infty$}.$$
As for the previous results, the case of $\gamma$ horizontal is
slightly easier, because $\omega(\cu) = \omega(\ddot{\gamma})=0$. Therefore,
\begin{align*}
D^{\Sigma}_t \cu &= \left( \bar{q} \left(\ddot{\gamma}_1 \right)- \bar{p}\left( \ddot{\gamma}_2 \right)\right) E_1 + \bar{r}_L \left( \bar{p} \left(\ddot{\gamma}_1 \right) +  \bar{q}\left( \ddot{\gamma}_2  \right)\right) E_2,\\
\cu &= \left(\bar{q} \cuo - \bar{p}\cut \right) E_1 + \bar{r}_L \left(  \bar{p} \cuo + \bar{q}\cut \right) E_2,\\
J_L(\cu)  &= -\bar{r}_L \left(  \bar{p} \cuo + \bar{q}\cut \right) E_1 +  \left(\bar{q} \cuo - \bar{p}\cut \right) E_2
\end{align*}
Hence, from Lemma \ref{limits},
$$\scal{D^{\Sigma}_t \cu}{J_L(\cu)}_L = \bar{r}_L \left[- \left(\bar{q} \cuo - \bar{p}\cut \right)\left(  \bar{p} \cuo + \bar{q}\cut \right) +
\left(  \bar{p} \cuo + \bar{q}\cut \right)\left(\bar{q} \cuo - \bar{p}\cut \right)\right] \to 0, \quad \mbox{as $L \to +\infty$},$$
 as desired.
\end{proof}

\begin{remark}
The sub-Riemannian geodesic curvature (both signed and unsigned) is invariant under isometries of $\heis$ (left translations and rotations about the $x_3$-axis), and scales by the factor $\tfrac1r$ under the dilation $\delta_r$. Compare Proposition \ref{curvature-isometry-proposition} and Remark \ref{curvature-dilation-remark}. We omit the elementary proofs of these facts.
\end{remark}


\section{Riemannian approximation of curvatures of surfaces}\label{C3}
In this section we want to study curvatures of regular surfaces $\Sigma \subset \heis$.
As in the previous sections, the idea will be to compute first the already
known curvatures of a regular 2-dimensional Riemannian manifold $\Sigma \subset (\reals^{3},g_{L})$,
and then try to derive appropriate Heisenberg notions taking the limit as $L \to +\infty$.

First, we need to define the \textit{second fundamental form} $II^{L}$ of the embedding of
$\Sigma$ into $(\reals^{3}, g_{L})$:
\begin{equation}\label{eq:secfun}
II^{L} = \left( \begin{array}{cc}
                   \scal{\nabla_{E_1}\nu_L}{E_1}_{L} & \scal{\nabla_{E_1}\nu_L}{E_2}_{L}\\
                   \scal{\nabla_{E_2}\nu_L}{E_1}_{L} & \scal{\nabla_{E_2}\nu_L}{E_2}_{L}\\
                  \end{array}\right).
\end{equation}
The explicit computation of the second fundamental form in our case can be found in \cite{CDPT}.
For sake of completeness, we recall here the complete statement.

\begin{theorem}[\cite{CDPT}, Theorem 4.3]
The second fundamental form $II^{L}$ of of the embedding of
$\Sigma$ into $(\reals^{3}, g_{L})$ is given by
\begin{equation}\label{eq:IIL}
II^{L} = \left( \begin{array}{cc}
\dfrac{l}{l_L}(X_{1}(\bar{p})+X_{2}(\bar{q})) & -\tfrac12 {\sqrt{L}} - \dfrac{l_L}{l}\scal{E_1}{\hgrad(\bar{r}_{L})}_{L}\\
-\tfrac12 {\sqrt{L}} - \dfrac{l_L}{l}\scal{E_1}{\hgrad(\bar{r}_{L})}_{L} & -\dfrac{l^{2}}{l_{L}^{2}}\scal{E_2}{\hgrad (\tfrac{r}{l})}_{L} + X_{3}^{L}(\bar{r}_{L})\\
\end{array}\right).
\end{equation}
\end{theorem}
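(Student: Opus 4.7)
The plan is to compute each of the four entries of $II^L$ directly from the definition \eqref{eq:secfun}, exploiting the explicit action of the Levi-Civita connection on the frame $\{X_1, X_2, X_3^L\}$ established earlier. The key algebraic inputs are the decompositions $\nu_L = \tfrac{l}{l_L}(\bar{p}X_1 + \bar{q}X_2) + \bar{r}_L X_3^L$, $E_1 = \bar{q}X_1 - \bar{p}X_2$, $E_2 = \bar{r}_L(\bar{p}X_1 + \bar{q}X_2) - \tfrac{l}{l_L}X_3^L$, together with the normalization identities $\bar{p}^2+\bar{q}^2=1$ and $\bar{p}_L^2+\bar{q}_L^2+\bar{r}_L^2=1$, and the rescaling relations $\bar{p}_L = \tfrac{l}{l_L}\bar{p}$, $\bar{q}_L = \tfrac{l}{l_L}\bar{q}$.

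First I would expand $\nabla_{E_i}\nu_L = \nabla_{E_i}(\bar{p}_L X_1 + \bar{q}_L X_2 + \bar{r}_L X_3^L)$ using property (iii) of affine connections, splitting the result into a \emph{functional part} (derivatives of the scalar coefficients) and a \emph{connection part} (the connection acting on the frame vectors). The connection part is assembled once from the lemma on $\nabla_{X_i}X_j$: for instance $\nabla_{E_1}X_1 = \tfrac12\bar{p}\sqrt{L}\,X_3^L$, $\nabla_{E_1}X_2 = \tfrac12\bar{q}\sqrt{L}\,X_3^L$, and $\nabla_{E_1}X_3^L = -\tfrac12\sqrt{L}(\bar{p}X_1+\bar{q}X_2)$; the expressions with $E_2$ in place of $E_1$ are analogous but slightly longer.

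For $II^L_{11}$ all three connection contributions are orthogonal to $E_1$ in the $g_L$-metric (the first two land on the vertical $X_3^L$, the third on $\bar{p}X_1+\bar{q}X_2$, which is horizontal but perpendicular to $E_1$), so only the functional part survives; substituting the rescaling relations and invoking the differentiated constraint $\bar{p}X_i(\bar{p})+\bar{q}X_i(\bar{q})=0$ collapses the expression to $\tfrac{l}{l_L}(X_1\bar{p}+X_2\bar{q})$. For $II^L_{12}$ the connection part no longer vanishes and recombines to $-\tfrac12\sqrt{L}\bigl(\bar{r}_L^2+\tfrac{l^2}{l_L^2}\bigr) = -\tfrac12\sqrt{L}$ via the full normalization, while the functional part, after using $\bar{r}_L^2+\tfrac{l^2}{l_L^2}=1$ to relate $E_1(\tfrac{l}{l_L})$ to $E_1(\bar{r}_L)$, reorganizes into $-\tfrac{l_L}{l}\scal{E_1}{\hgrad\bar{r}_L}_L$. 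The entry $II^L_{22}$ is handled in the same fashion; because $E_2$ has a genuine vertical component, an extra term $X_3^L(\bar{r}_L)$ survives alongside the rescaled horizontal piece $-\tfrac{l^2}{l_L^2}\scal{E_2}{\hgrad(r/l)}_L$.

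The main difficulty is not conceptual but organizational: in the off-diagonal entries and in $II^L_{22}$ one accumulates numerous products of the form $\bar{r}_L\bar{p}\cdot E_i(\bar{p}_L)$ and $\tfrac{l}{l_L}\sqrt{L}\,\bar{r}_L$ which must be systematically repackaged via the two normalization identities above to produce the compact closed form in \eqref{eq:IIL}. A useful sanity check is the expected symmetry $II^L_{12}=II^L_{21}$, which provides an independent verification that the two distinct expansions of $\scal{\nabla_{E_1}\nu_L}{E_2}_L$ and $\scal{\nabla_{E_2}\nu_L}{E_1}_L$ indeed agree.
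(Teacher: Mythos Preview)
Your approach is correct and is the natural direct computation; I spot-checked the $II^L_{11}$ and $II^L_{12}$ entries and the algebra goes through exactly as you outline, with the normalization identities $\bar{p}^2+\bar{q}^2=1$ and $\bar{r}_L^2+\tfrac{l^2}{l_L^2}=1$ doing precisely the work you assign to them. Note, however, that the paper does not supply its own proof of this statement: it is quoted verbatim as Theorem~4.3 of \cite{CDPT} and introduced with ``The explicit computation of the second fundamental form in our case can be found in \cite{CDPT}. For sake of completeness, we recall here the complete statement.'' So there is no in-paper argument to compare against; your computation is essentially what one finds in that reference, and it would serve perfectly well as a self-contained proof here.
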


The Riemannian mean curvature $\cH_{L}$ of $\Sigma$ is
$$
\cH_L := \trace(II^L),
$$
while the Riemannian Gaussian curvature $\cK_{L}$ is
$$
\cK_L := \ovl{\cK}_{L}(E_1,E_2) +\det(II^{L}).
$$
Here $\ovl{\cK}_{L}(E_1,E_2)$ denotes the sectional curvature of $(\reals^{3},g_L)$ in the plane generated by $E_1$ and $E_2$.
We need to spend a few words about the last definition, which is actually
a result known as the \textit{Gauss lemma}. The geometric meaning is that
the second fundamental form $II^{L}$ is the right object to measure
the discrepancy between the two sectional curvatures, of the ambient
manifold, and of the isometrically immersed one.

\begin{proposition}
The horizontal mean curvature $\cH_0$ of $\Sigma \subset \heis$ is
given by
\begin{equation}\label{eq:H0}
\cH_0 = \lim_{L \to +\infty} \cH_{L} = \mathrm{div}_{H}\left( \dfrac{\hgrad u}{\|\hgrad u\|_{\he}} \right),
\end{equation}
and the sub-Riemannian Gaussian curvature $\cK_0$ is given by
\begin{equation}\label{eq:K0}
\begin{aligned}
\cK_0 &=  \lim_{L \to +\infty} \cK_{L} \\
&= -\left( \dfrac{X_3 u}{\|\hgrad u\|_{\he}}\right)^{2}
-  \left( \dfrac{X_2 u}{\|\hgrad u\|_{\he}}\right) X_1 \left(\dfrac{X_3 u}{\|\hgrad u\|_{\he}}\right)
+ \left( \dfrac{X_1 u}{\|\hgrad u\|_{\he}} \right) X_2 \left(\dfrac{X_3 u}{\|\hgrad u\|_{\he}} \right).
\end{aligned}
\end{equation}
\end{proposition}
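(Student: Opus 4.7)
The plan is to use the Gauss decomposition $\cK_L = \overline{\cK}_L(E_1,E_2)+\det(II^L)$ together with $\cH_L=\trace(II^L)$, and to take the limit $L\to\infty$ term-by-term in the explicit form \eqref{eq:IIL} by invoking Lemma \ref{limits} throughout.

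The mean curvature is the easy case. Tracing \eqref{eq:IIL},
\begin{equation*}
\cH_L = \tfrac{l}{l_L}(X_1\bar p+X_2\bar q) - \tfrac{l^2}{l_L^2}\scal{E_2}{\hgrad(r/l)}_L + X_3^L(\bar r_L).
\end{equation*}
Since $l_L\to l$, while $r/l=O(L^{-1/2})$ and the horizontal part of $E_2$ equals $\bar r_L(\bar pX_1+\bar qX_2)=O(L^{-1/2})$, the second summand is $O(L^{-1})$; similarly $X_3^L(\bar r_L)=L^{-1/2}X_3(\bar r_L)\to 0$. Only the first summand survives and equals $\divh(\hgrad u/\|\hgrad u\|_\he)$, yielding \eqref{eq:H0}.

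For the Gaussian curvature both $\det(II^L)$ and $\overline{\cK}_L(E_1,E_2)$ diverge like $L/4$, and the whole point is that these divergences cancel through the Gauss equation. On the extrinsic side, since $II^L_{11}\to \divh(\hgrad u/\|\hgrad u\|_\he)$ and $II^L_{22}\to 0$ by the same arguments as above, $\det(II^L)$ is dominated by $-(II^L_{12})^2$. Expanding $\bigl(\tfrac12\sqrt L+\tfrac{l_L}{l}\scal{E_1}{\hgrad(\bar r_L)}_L\bigr)^2$ and using $\sqrt L\,\bar r_L\to X_3u/\|\hgrad u\|_\he$ from Lemma \ref{limits}, the middle cross-term converges to $\bar qX_1(X_3u/\|\hgrad u\|_\he)-\bar pX_2(X_3u/\|\hgrad u\|_\he)$, so that
\begin{equation*}
\det(II^L) = -\tfrac{L}{4} - \bar q X_1\!\left(\tfrac{X_3u}{\|\hgrad u\|_\he}\right) + \bar p X_2\!\left(\tfrac{X_3u}{\|\hgrad u\|_\he}\right) + o(1).
\end{equation*}

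On the intrinsic side, I would compute $\overline{\cK}_L(E_1,E_2)=\scal{R(E_1,E_2)E_1}{E_2}_L$ via multilinearity of $R$ (Remark \ref{Rfunct}), after first extracting the curvatures in the left-invariant frame from the Levi-Civita connection of Section \ref{not}:
\begin{equation*}
R(X_1,X_2)X_1=-\tfrac{3L}{4}X_2,\quad R(X_1,X_2)X_2=\tfrac{3L}{4}X_1,\quad R(X_i,X_3^L)X_i=\tfrac{L}{4}X_3^L\ (i=1,2),
\end{equation*}
with $R(X_i,X_3^L)X_j=0$ for $i\neq j$. A direct expansion of $R(E_1,E_2)E_1$ with $E_1=\bar qX_1-\bar pX_2$, $E_2=\bar r_L\bar pX_1+\bar r_L\bar qX_2-(l/l_L)X_3^L$, together with $\bar p^2+\bar q^2=1$ and $\bar r_L^2+(l/l_L)^2=1$, collapses the sectional curvature to the clean identity $\overline{\cK}_L(E_1,E_2)=\tfrac{L}{4}-L\bar r_L^2$; then $L\bar r_L^2\to(X_3u/\|\hgrad u\|_\he)^2$ by Lemma \ref{limits} gives $\overline{\cK}_L(E_1,E_2)=\tfrac{L}{4}-(X_3u/\|\hgrad u\|_\he)^2+o(1)$. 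Adding the two expansions the $\pm L/4$ divergences cancel and the remaining pieces, after substituting $\bar p = X_1u/\|\hgrad u\|_\he$ and $\bar q = X_2u/\|\hgrad u\|_\he$, assemble exactly into \eqref{eq:K0}. The main obstacle is precisely this cancellation; neither $\det(II^L)$ nor $\overline{\cK}_L$ alone has a finite sub-Riemannian limit, so the Gauss decomposition is indispensable.
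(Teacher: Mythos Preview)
Your argument is correct and follows the same route as the paper: trace of \eqref{eq:IIL} for $\cH_0$, and the Gauss decomposition $\cK_L=\overline{\cK}_L(E_1,E_2)+\det(II^L)$ with term-by-term asymptotics via Lemma \ref{limits} for $\cK_0$. In fact your computation of the ambient sectional curvature, $\overline{\cK}_L(E_1,E_2)=\tfrac{L}{4}-L\bar r_L^2$, is the sign needed for the $\pm L/4$ cancellation against $\det(II^L)\sim -\tfrac{L}{4}-\langle E_1,\hgrad(X_3u/\|\hgrad u\|_\he)\rangle$; the paper records this value as $-\tfrac{L}{4}-L\bar r_L^2$, which appears to be a typographical slip since it would not produce the stated limit \eqref{eq:K0}.
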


In \eqref{eq:H0} the expression $\mathrm{div}_H$ denotes the {\it horizontal divergence} of a horizontal vector field, which is defined as follows: for a horizontal vector field $V = a\, X_1 + b \, X_2$,
$$
\mathrm{div}_H(V) = X_1(a) + X_2(b).
$$

\begin{proof}
By definition,
$$
\cH_{L} = \mathrm{trace}(II^{L}) = \dfrac{l}{l_L}(X_{1}(\bar{p})+X_{2}(\bar{q})) -\dfrac{l^{2}}{l_{L}^{2}}\scal{E_2}{\hgrad (\tfrac{r}{l})}_{L} + X_{3}^{L}(\bar{r}_{L}).
$$
We recall that $r = \tfrac{X_{3}u}{\sqrt{L}}$ and therefore in the limit as $L \to +\infty$, we obtain
$$
\cH_0 = \lim_{L \to +\infty} \cH_L = X_1 \left( \dfrac{X_1 u}{\|\hgrad u\|_{\he}}\right) + X_2 \left( \dfrac{X_2 u}{\|\hgrad u\|_{\he}}\right).
$$
As we have already observed in the computation of $\cH_0$, the term
$$
-\dfrac{l^{2}}{l_{L}^{2}}\scal{E_2}{\hgrad (\tfrac{r}{l})}_{L} + X_{3}^{L}(\bar{r}_{L})
$$
tends to zero as $L \to +\infty$, and therefore in analyzing $\det(II^{L})$ we can focus only on the term
$$
- \left(-\dfrac{\sqrt{L}}{2} - \dfrac{l_L}{l}\scal{E_1}{\hgrad(\bar{r}_{L})}_{L}\right)^{2}.
$$
Clearly,
$$
- \left(-\dfrac{\sqrt{L}}{2} - \dfrac{l_L}{l}\scal{E_1}{\hgrad(\bar{r}_{L})}_{L}\right)^{2} \sim -\dfrac{L}{4} - \scal{E_1}{\hgrad \left(\dfrac{X_3 u}{\|\hgrad u\|_{\he}}\right)}_{L} \qquad \mbox{as $L \to +\infty$.}
$$
It remains to compute the sectional curvature $\ovl{\cK}_{L}(E_{1},E_{2})$. By Definition \ref{sectional}, the functional property in Remark \ref{Rfunct}, and orthonormality of the basis $\{E_{1}, E_{2}\}$, we have
$$
\ovl{\cK}_{L}(E_{1},E_{2}) = - \dfrac{L}{4} - L \, \bar{r}_{L}^{2}.
$$
Hence
 \begin{equation*}
 \begin{aligned}
 \cK_0 &=  \lim_{L \to +\infty} \cK_{L} \\
 &= -\left( \dfrac{X_3 u}{\|\hgrad u\|_{\he}}\right)^{2}
-  \left( \dfrac{X_2 u}{\|\hgrad u\|_{\he}}\right) X_1 \left(\dfrac{X_3 u}{\|\hgrad u\|_{\he}}\right)
 + \left( \dfrac{X_1 u}{\|\hgrad u\|_{\he}} \right) X_2 \left(\dfrac{X_3 u}{\|\hgrad u\|_{\he}} \right),
 \end{aligned}
 \end{equation*}
  as desired.
 \end{proof}

\begin{remark}\label{horiz-mean-curve-remark}
It is well known (see, for instance \cite{CDPT}) that the horizontal mean curvature of $\Sigma$ at a non-characteristic point $x$ coincides, up to a choice of sign, with the signed horizontal curvature of the Legendrian curve $\gamma$ in $\Sigma$ through $x$. This is the unique horizontal curve $\gamma$ defined locally near $x$, $\gamma:(-\eps,\eps) \to \Sigma$, such that $\gamma(0) = x$ and $\dot\gamma(0) = J(\nabla_H u/||\nabla_H u||_H) \in H_x\heis\cap T_x\Sigma$. The signed horizontal curvature of $\gamma=(\gamma_1,\gamma_2,\gamma_3)$ is given by
$$
\dfrac{\cuo \ddot{\gamma}_2 - \cut \ddot{\gamma}_1}{\sqrt{(\cuo^2 + \cut^2)^{3}}}.
$$
Observe that the absolute value of this expression coincides with the sub-Riemannian curvature $k_\gamma^0$ introduced in Definition \ref{def:sRc}, when considered on horizontal curves.
\end{remark}

The horizontal mean curvature $\cH_0$ and sub-Riemannian Gauss curvature $\cK_0$ have been given in terms of a defining function $u$ for the surface $\Sigma$. The preceding remark shows that $\cH_0$ is independent of the choice of the defining function, and the question arises whether such a result holds also for $\cK_0$. This is in fact the case, as we now demonstrate. Let $\Sigma$ be a $C^2$ surface defined locally near a point $x$ as the zero set of two $C^2$ functions $u$ and $v$ with $\nabla_{\reals^3} u\ne 0$ and $\nabla_{\reals^3} v \ne 0$. Then there exists a function $\sigma$ so that $v=e^\sigma u$ or $v=-e^\sigma u$ in a neighborhood of $x$. Without loss of generality we assume that $v=e^\sigma u$. The expressions
$$
\nu_0 = \frac{\nabla_H u}{||\nabla_H u||_H} \qquad \mbox{and} \qquad
\cP_0 = \frac{X_3u}{||\nabla_H u||_H}
$$
which occur in the expression for $\cK_0$ remain invariant under change of defining function. Indeed,
$$
\frac{\nabla_H v}{||\nabla_H v||_H} = \frac{e^\sigma(\nabla_H u + u \nabla_H \sigma)}{e^{\sigma}||\nabla_H u + u \nabla_H \sigma||_H}
\quad \mbox{and} \quad
\frac{X_3v}{||\nabla_H v||_H} = \frac{e^\sigma(X_3u+uX_3\sigma)}{e^\sigma||\nabla_H u + u \nabla_H \sigma||},
$$
which equal
$$
\frac{\nabla_H u}{||\nabla_H u||_H} \qquad \mbox{and} \qquad
\frac{X_3u}{||\nabla_H u||_H}
$$
respectively when evaluated at a point of $\Sigma$ (where $u=0$). Let us see how the horizontal gradient of $\cP_0$ transforms under such an operation. An easy computation gives
$$
\nabla_H \left( \frac{X_3v}{||\nabla_H v||_H} \right) = \nabla_H \left( \frac{X_3u}{||\nabla_H u||_H} \right) +
(X_3\sigma - \cP_0 \langle \nu_0,\nabla_H\sigma\rangle_H) \nu_0
$$
when evaluated on $\Sigma$. Since
$$
\cK_0 = -\cP_0^2 - \left\langle \nabla_H \cP_0,J\nu_0 \right\rangle_H
$$
we conclude that the expression for $\cK_0$ is independent of the choice of defining function.

We conclude this section with a brief discussion of the local summability of the horizontal
Gaussian curvature $\cK_0$ with respect to the Heisenberg
perimeter measure near isolated characteristic points. This observation will be used in our
subsequent study of the validity of a sub-Riemannian Gauss--Bonnet theorem in $\heis$.
Without loss of generality, suppose that the origin
is an isolated characteristic point of $\Sigma$. Consider
a neighborhood $U$ of the origin on $\Sigma$.
Due to the expression (\ref{eq:K0}), and since
the Heisenberg perimeter measure $d\sigma_{\he}$ is given by
$$d\sigma_{\he}=\dfrac{\|\hgrad u\|_{\he}}{\|\grad_{\reals^{3}} u\|_{\reals^{3}}}  d\mathcal{H}^{2}_{\mathbb{R}^{3}},$$
there exists a positive constant $C=C(U)>0$ such that
\begin{equation}\label{eq:summ}
|\cK_0| d\sigma_{\he} \leq \dfrac{C}{\|\hgrad u\|_\he} d\mathcal{H}^{2}_{\mathbb{R}^{3}}.
\end{equation}
Finer results turn out to be particularly difficult to achieve. We postpone further discussion of this topic to Section \ref{questions}.

\section{A Gauss--Bonnet theorem}\label{gb}

The goal of this section is to prove Theorem \ref{HGB}.
Before doing that, we need to recall a
couple of technical results concerning, respectively
the Riemannian length measure, and the Riemannian surface
measure.
Let us first consider the case of a curve $\gamma:[a,b] \to (\mathbb{R}^{3},g_L)$
in the Riemannian manifold $(\mathbb{R}^{3},g_L)$.
We define the Riemannian length measure,
$$
d\cu_{L} := \gamma_{\sharp}\left( \|\cu\|_L \, d\mathcal{L}^{1}_{\llcorner [a,b]}\right) = \|\cu\|_L \, dt.
$$

\begin{lemma}\label{Lmeas}
Let $\gamma:[a,b] \to (\mathbb{R}^{3},g_L)$ be a Euclidean $C^{2}$-smooth and regular curve.
Then
\begin{equation}\label{eq:curvemeas}
\lim_{L \to +\infty} \dfrac{1}{\sqrt{L}}\int_{\gamma} d\cu_{L} =
\int_{a}^{b}|\omega(\cu)| \, dt =: \int_{\gamma} \, d\dot{\gamma},
\end{equation}
\end{lemma}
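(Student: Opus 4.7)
The plan is to reduce the statement to a pointwise convergence of the integrand combined with a standard dominated convergence argument. The key observation, already recorded in \eqref{dot-gamma-in-coordinates}, is that in the orthonormal frame $\{X_1,X_2,X_3^L\}$ the velocity decomposes as
$$
\dot\gamma = \dot\gamma_1 X_1 + \dot\gamma_2 X_2 + (\sqrt{L}\,\omega(\dot\gamma))\,X_3^L,
$$
so that
$$
\|\dot\gamma\|_L = \sqrt{\dot\gamma_1^2 + \dot\gamma_2^2 + L\,\omega(\dot\gamma)^2}.
$$
This is the only structural identity required for the proof.

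First I would rewrite the left-hand side of \eqref{eq:curvemeas} by factoring $\sqrt{L}$ out of the square root:
$$
\frac{1}{\sqrt{L}}\int_\gamma d\dot\gamma_L = \int_a^b \frac{\|\dot\gamma\|_L}{\sqrt{L}}\,dt = \int_a^b \sqrt{\frac{\dot\gamma_1^2+\dot\gamma_2^2}{L} + \omega(\dot\gamma)^2}\,dt.
$$
For each $t\in[a,b]$ the integrand converges pointwise, as $L\to+\infty$, to $|\omega(\dot\gamma(t))|$, which is precisely the integrand appearing on the right-hand side.

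Next I would justify exchanging the limit with the integral. Since $\gamma$ is $C^2$ on the compact interval $[a,b]$, the functions $\dot\gamma_1,\dot\gamma_2,\omega(\dot\gamma)$ are all continuous and hence bounded on $[a,b]$. For any $L\ge 1$ we have the uniform bound
$$
\sqrt{\frac{\dot\gamma_1^2+\dot\gamma_2^2}{L} + \omega(\dot\gamma)^2} \;\le\; \sqrt{\dot\gamma_1^2+\dot\gamma_2^2+\omega(\dot\gamma)^2},
$$
and the right-hand side is integrable on $[a,b]$. Dominated convergence (or equally well monotone convergence, since the integrand is monotonically decreasing in $L$) then yields
$$
\lim_{L\to+\infty}\int_a^b \sqrt{\frac{\dot\gamma_1^2+\dot\gamma_2^2}{L} + \omega(\dot\gamma)^2}\,dt = \int_a^b |\omega(\dot\gamma)|\,dt,
$$
which is the claimed identity.

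There is no genuine obstacle in this argument; the entire content is the coordinate expression of $\|\dot\gamma\|_L$ together with an elementary convergence theorem. The only point worth flagging is conceptual rather than technical: the rescaling by $1/\sqrt{L}$ is precisely what is needed to extract the contact-form contribution $|\omega(\dot\gamma)|$ in the limit, and the Euclidean $C^2$ regularity of $\gamma$ on $[a,b]$ is the mild hypothesis that provides the dominating function.
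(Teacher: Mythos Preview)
Your proof is correct and follows essentially the same approach as the paper: both compute $\|\dot\gamma\|_L = \sqrt{\dot\gamma_1^2+\dot\gamma_2^2+L\,\omega(\dot\gamma)^2}$, observe the pointwise limit after dividing by $\sqrt{L}$, and invoke dominated convergence. Your version is in fact slightly more explicit about the dominating function (restricting to $L\ge 1$ and bounding by $\sqrt{\dot\gamma_1^2+\dot\gamma_2^2+\omega(\dot\gamma)^2}$), which the paper leaves implicit.
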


\begin{proof}
As we already saw, $\|\cu\|_L = \sqrt{\cuo^{2}+\cut^{2}+L \,\omega(\cu)^{2}}$, hence by the definition of $d\cu_{L}$ and the dominated convergence theorem,
\begin{equation*}
  \lim_{L \to +\infty} \dfrac{1}{\sqrt{L}}\int_{\gamma} d\cu_{L} = \int_{a}^{b}\lim_{L \to +\infty} \dfrac{\|\cu\|_L}{\sqrt{L}}\, dt
= \int_{a}^{b}\lim_{L \to +\infty} \dfrac{\sqrt{\cuo^{2}+\cut^{2}+L \,\omega(\cu)^{2}}}{\sqrt{L}}\, dt
= \int_{a}^{b} |\omega(\cu)| \, dt
\end{equation*}
 as desired.
\end{proof}

\begin{remark}
It is clear that this scaled measure vanishes on fully horizontal curves.
\end{remark}

Let us also recall a technical result concerning the scaled limit of the Riemannian surface measure.

\begin{proposition}[\cite{CDPT}, Chapter 5.1.]\label{LSmeas}
Let $\Sigma \subset (\mathbb{R}^{3},g_L)$ be a Euclidean $C^{2}$-smooth surface.
Let $d\sigma_{L}$ denote the surface measure on $\Sigma$ with respect to the Riemannian metric $g_L$.
Let $M$ be the $2 \times 3$ matrix
\begin{equation}\label{eq:M}
 M:= \left (
\begin{array}{ccc}
1 & 0 & -\tfrac{x_2}{2} \\
0 & 1 & \tfrac{x_1}{2} \\
\end{array}
\right ).
\end{equation}
If $\Sigma = \{u=0\}$  with $u \in C^{2}(\mathbb{R}^{3})$, then
\begin{equation}\label{eq:SMeas1}
\lim_{L \to \infty} \dfrac{1}{\sqrt{L}} \int_{\Sigma} d\sigma_{L} = \int_{\Sigma}\dfrac{\|\hgrad u\|_{\he}}{\|\nabla_{\mathbb{R}^{3}}u\|_{\reals^{3}}} \, d\mathcal{H}^{2}_{\mathbb{R}^{3}} = \int_{\Sigma} d\mathcal{H}^{3}_{cc},
\end{equation}
 where $d\mathcal{H}^{2}_{\mathbb{R}^{3}}$ denotes the Euclidean 2-Hausdorff measure and
 $d\mathcal{H}^{3}_{cc}$ the 3-dimensional Hausdorff measure with respect to the $cc$ metric $d_{cc}$.
If $\Sigma = f(D)$ with
$$
f = f(\boldu,\boldv):D\subset \mathbb{R}^{2} \to \mathbb{R}^{3},$$
 and Euclidean normal vector to $\Sigma$ given by $\vec{n}(\boldu,\boldv)= (f_{\boldu} \times f_{\boldv})(\boldu,\boldv)$, then
\begin{equation}\label{eq:SMeas2}
\lim_{L \to \infty} \dfrac{1}{\sqrt{L}} \int_{\Sigma} d\sigma_{L} =  \int_{D} \|M\vec{n}\|_{\mathbb{R}^{2}} \, d\boldu d\boldv.
\end{equation}
\end{proposition}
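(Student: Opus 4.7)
The plan is to work first with the parametric formula \eqref{eq:SMeas2} and then deduce the level-set formula \eqref{eq:SMeas1} as a consequence, since the parametric version exposes the algebraic structure most cleanly.

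Step one: write the Riemannian surface element in coordinates. For $\Sigma = f(D)$ with $f:D \subset \reals^{2} \to (\reals^{3},g_{L})$, the induced metric has first fundamental form coefficients $E_{L} = \scal{f_{\boldu}}{f_{\boldu}}_{L}$, $F_{L} = \scal{f_{\boldu}}{f_{\boldv}}_{L}$, $G_{L}=\scal{f_{\boldv}}{f_{\boldv}}_{L}$, so
$$
d\sigma_{L} = \sqrt{E_{L}G_{L}-F_{L}^{2}} \, d\boldu \, d\boldv = \sqrt{\det(J^{T}g_{L}J)} \, d\boldu \, d\boldv,
$$
where $J = [f_{\boldu}\mid f_{\boldv}]$ is the $3\times 2$ Jacobian. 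I would then invoke the identity $\det(J^{T}g_{L}J) = \det(g_{L}) \cdot \vec{n}^{T}g_{L}^{-1}\vec{n}$, with $\vec{n}=f_{\boldu}\times f_{\boldv}$ the Euclidean normal; this is the straightforward $3\times 3$ version of Cauchy--Binet applied to the wedge product and the dual metric.

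Step two: compute $\vec{n}^{T}g_{L}^{-1}\vec{n}$ explicitly from the formula for $g_{L}^{-1}$ given just before Section \ref{C1}. Writing $\vec{n}=(n_{1},n_{2},n_{3})$ and expanding, the cross terms combine as a perfect square and the calculation collapses to
$$
\vec{n}^{T}g_{L}^{-1}\vec{n} = \left(n_{1}-\tfrac{x_{2}}{2}n_{3}\right)^{2} + \left(n_{2}+\tfrac{x_{1}}{2}n_{3}\right)^{2} + \tfrac{n_{3}^{2}}{L} = \|M\vec{n}\|_{\reals^{2}}^{2} + \tfrac{n_{3}^{2}}{L},
$$
where $M$ is the matrix in \eqref{eq:M}. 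Combined with $\det g_{L}=L$, this gives
$$
\tfrac{1}{\sqrt{L}} \, d\sigma_{L} = \sqrt{\|M\vec{n}\|_{\reals^{2}}^{2} + \tfrac{n_{3}^{2}}{L}} \, d\boldu \, d\boldv,
$$
which is bounded above by $\|M\vec{n}\|_{\reals^{2}} + |n_{3}|$ and converges pointwise to $\|M\vec{n}\|_{\reals^{2}}$ as $L\to\infty$. Dominated convergence on the compact parameter domain $D$ yields \eqref{eq:SMeas2}.

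Step three: pass to the level-set version. The key algebraic observation is that when one computes $M\vec{n}$ for $\vec{n}$ proportional to $\nabla_{\reals^{3}}u$, one gets precisely $(X_{1}u,X_{2}u)$, because the definitions of $X_{1},X_{2}$ insert exactly the entries of $M$ applied to $\nabla_{\reals^{3}}u$. Consequently
$$
\frac{\|M\vec{n}\|_{\reals^{2}}}{\|\vec{n}\|_{\reals^{3}}} = \frac{\|\hgrad u\|_{\he}}{\|\nabla_{\reals^{3}}u\|_{\reals^{3}}},
$$
and since $\|\vec{n}\|_{\reals^{3}}\,d\boldu \, d\boldv = d\mathcal{H}^{2}_{\reals^{3}}$ under the parametrization, \eqref{eq:SMeas2} rewrites as the middle expression in \eqref{eq:SMeas1}. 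The final equality $\int_{\Sigma} \tfrac{\|\hgrad u\|_{\he}}{\|\nabla_{\reals^{3}}u\|_{\reals^{3}}} \, d\mathcal{H}^{2}_{\reals^{3}} = \int_{\Sigma} d\mathcal{H}^{3}_{cc}$ is the standard identification of the Heisenberg perimeter measure with the $3$-dimensional Carnot--Carath\'eodory Hausdorff measure on a Euclidean $C^{2}$ surface.

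The only mildly delicate point is the algebraic identity in step one (Cauchy--Binet) together with the perfect-square recognition in step two; neither is genuinely hard, but overlooking the $M$-structure would obscure the final expression. Once the pointwise limit is in hand, dominated convergence is routine because $\|M\vec{n}\|_{\reals^{2}}$ and $n_{3}$ are continuous on the compact set $D$.
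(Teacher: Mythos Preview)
Your proof is correct. Note, however, that the paper does not actually prove this proposition: it is quoted from \cite{CDPT}, Chapter 5.1, and stated without proof. That said, your approach is exactly the one the paper implicitly relies on later: in Step~1 of the proof of Theorem~\ref{HGB} the authors introduce the $3\times 3$ matrix
\[
M_{L}=\begin{pmatrix} 1 & 0 & -\tfrac{x_2}{2} \\ 0 & 1 & \tfrac{x_1}{2} \\ 0 & 0 & \tfrac{1}{\sqrt{L}} \end{pmatrix},
\]
and use the identity $\tfrac{1}{\sqrt{L}}\,d\sigma_{L}=\|M_{L}\vec{n}\|\,d\boldu\,d\boldv$, which is precisely your formula $\sqrt{\|M\vec{n}\|_{\reals^{2}}^{2}+n_{3}^{2}/L}\,d\boldu\,d\boldv$ in disguise. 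One minor point: you invoke compactness of $D$ for dominated convergence, but the proposition does not assume this; since $\|M\vec{n}\|_{\reals^{2}}^{2}+n_{3}^{2}/L$ is monotone decreasing in $L$, you can appeal instead to monotone convergence (or simply dominate by the value at $L=1$), which removes the need for that hypothesis.
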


The classical Gauss--Bonnet theorem for a regular surface $\Sigma \subset (\mathbb{R}^{3},g_L)$
with boundary components given by Euclidean $C^{2}$-smooth and regular curves $\gamma_{i}$
(see for instance \cite[Chapter 9]{L} or \cite{dC}) states that
\begin{equation}\label{eq:GB}
\int_{\Sigma} \cK_{L}\, d\sigma_{L}
 + \sum_{i=1}^{n}\int_{\gamma_i} k_{\gamma_i, \Sigma}^{L,s} \, d\dot{\gamma_i}_{L}
= 2 \pi \chi(\Sigma),
\end{equation}
where $\cK_{L}$ is the Gaussian curvature of $\Sigma$, $k_{\gamma_i, \Sigma}^{L,s}$
is the signed geodesic curvature of the $i^{th}$
boundary component $\gamma_i$, $d\dot{\gamma_i}_{L}  = \|\cu_i\|_{L} d\theta$
and $\chi(\Sigma)$ is the Euler characteristic of $\Sigma$. It is clear that for a regular
surface $\Sigma \subset (\reals^{3}, g_{L})$ without boundary, (\ref{eq:GB}) simplifies to
$$\int_{\Sigma}\cK_{L} \,d\sigma_{L} = 2\pi \chi(\Sigma).$$
 Recalling the considerations made on the Riemannian surface measure $d\sigma_{L}$,
it is natural to divide (\ref{eq:GB}) by a factor $\sqrt{L}$,
\begin{equation}\label{eq:GBScal}
\int_{\Sigma} \cK_{L}\, \dfrac{d\sigma_{L}}{\sqrt{L}}
 + \sum_{i=1}^{n}\int_{\gamma_i} k_{\gamma_i, \Sigma}^{L,s} \, \dfrac{d\dot{\gamma_i}_{L}}{\sqrt{L}}
= \dfrac{2 \pi \chi(\Sigma)}{\sqrt{L}} ,
\end{equation}
and then hope to derive a Gauss--Bonnet Theorem as a limit as $L \to +\infty$.
The most difficult task is to take care of the possible presence
of characteristic points on $\Sigma$. In order to deal with them,
we provide the following general definition:

\begin{definition}[(R)-property]
Let $S \subset \mathbb{R}^{2}$ be any set in $\mathbb{R}^{2}$.
We say that the set $S$ satisfies the removability
(R)-property, if for every $\ep >0$,
there exist a number $n = n(\ep) \in \mathbb{N}$
and smooth simple closed curves
$\gamma_{1}, \ldots, \gamma_n: I\subset \mathbb{R} \to \mathbb{R}^{2}$,
such that
$$S \subset \bigcup_{i=1}^{n}\mathrm{int}(\gamma_i),
\quad \mbox{and} \quad \sum_{i=1}^{n} \mathrm{length}(\gamma_i) \leq \ep,$$
 where $\mathrm{length}_{E}(\gamma)$ denotes the usual
Euclidean length of a curve in $\mathbb{R}^{2}$.
\end{definition}

It is clear that if the set $S$ consists of finitely many isolated points, then it satisfies
the (R)-property. A more complicated example is provided
by the self-similar Cantor set $C^{(2)}(\lambda)$ in $\mathbb{R}^{2}$ with scaling ratio $\lambda< 1/4$. Here $C^{(2)}(\lambda) = C^{(1)}(\lambda) \times C^{(1)}(\lambda)$, where $C^{(1)}(\lambda)$ denotes the unique nonempty compact subset of $\reals$ which is fully invariant under the action of the two contractive similarities $f_1(x) = \lambda x$ and $f_2(x) = \lambda x + 1 - \lambda$.

\begin{proposition}
When $\lambda < \tfrac14$, the self-similar Cantor set $C^{(2)}(\lambda)$ satisfies the (R)-property.
\end{proposition}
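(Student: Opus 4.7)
The strategy is to exploit the self-similar structure of $C^{(2)}(\lambda)$ and the fact that $4\lambda < 1$ forces the natural level-$n$ cover to have exponentially small total perimeter. To begin, I would unpack the iterated function system: by construction, $C^{(1)}(\lambda)$ is contained in the union of $2^n$ closed intervals of length $\lambda^n$ at stage $n$, so $C^{(2)}(\lambda) = C^{(1)}(\lambda)\times C^{(1)}(\lambda)$ lies inside a family $\mathcal{Q}_n$ consisting of $4^n$ closed axis-aligned squares of side length $\lambda^n$, namely the products of pairs of retained intervals in the two factors.

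For each $Q \in \mathcal{Q}_n$ I would enclose $Q$ in the circle $\gamma_Q$ centered at the center of $Q$ with radius $\lambda^n$: since this radius strictly exceeds the half-diagonal $\tfrac{\sqrt{2}}{2}\lambda^n$, one has $Q \subset \mathrm{int}(\gamma_Q)$. Each $\gamma_Q$ is a smooth simple closed curve of Euclidean length exactly $2\pi\lambda^n$, so summing over the $4^n$ squares yields
\[
\sum_{Q \in \mathcal{Q}_n} \mathrm{length}_E(\gamma_Q) \;\leq\; 2\pi\,(4\lambda)^n.
\]
Because $\lambda < \tfrac14$, the base $4\lambda$ is strictly less than $1$, so the right-hand side tends to zero as $n\to\infty$. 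Given $\epsilon > 0$, I would simply choose $n = n(\epsilon)$ large enough that $2\pi(4\lambda)^n \leq \epsilon$; the finite collection $\{\gamma_Q : Q \in \mathcal{Q}_n\}$ then consists of smooth simple closed curves whose Jordan interiors cover $C^{(2)}(\lambda)$ and whose Euclidean lengths sum to at most $\epsilon$, verifying the (R)-property.

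There is no real obstacle here: distinct circles $\gamma_Q$ may well overlap, but the definition of the (R)-property does not require pairwise disjointness of the enclosing curves --- only that each be individually a smooth simple closed curve (so that its interior is well defined via the Jordan curve theorem) and that the union of these interiors contain the set. The one genuine quantitative input is the hypothesis $\lambda < 1/4$, which is precisely what forces $4\lambda < 1$ and hence geometric decay, rather than growth, of the summed perimeters over successive levels of the self-similar construction.
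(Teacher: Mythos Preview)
Your argument is correct and is essentially identical to the paper's own proof: both surround the $4^n$ level-$n$ pieces by circles of radius comparable to $\lambda^n$ and observe that the total perimeter $\asymp (4\lambda)^n \to 0$ since $\lambda<\tfrac14$. You have simply supplied a bit more detail (the explicit radius and the check that the square lies in the interior of the circle) than the paper's terse version.
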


\begin{proof}
At the $n^{th}$ stage of the iterative construction of $C^{(2)}(\lambda)$
we have $4^{n}$ pieces. We can surround
every such piece with a smooth closed curve $\gamma_{n}^{k}$. For sake of
simplicity let us take a Euclidean
circle whose radius $r_{n}^{k}$ is comparable to $\lambda^n$. Therefore
\begin{equation*}
\sum_{k=1}^{4^{n}} \mathrm{length}_{E}(\gamma_{n}^{k})
\lesssim 4^{n} \lambda^{n} \to 0, \quad \mbox{as $n \to +\infty$},
\end{equation*}
because $\lambda < 1/4$.
\end{proof}

\begin{lemma}\label{Rprop}
If $S \subset \mathbb{R}^{2}$ is compact and such that $\mathcal{H}_{E}^{1}(S)=0$ then
$S$ satisfies the (R)-property.
\end{lemma}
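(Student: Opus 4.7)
The approach is to unpack the definition of one-dimensional Euclidean Hausdorff measure and then invoke compactness of $S$. Given $\ep > 0$, I would first use $\mathcal{H}_{E}^{1}(S) = 0$ to select a countable cover $\{U_{i}\}_{i \in \nats}$ of $S$ satisfying
$$
\sum_{i} \diam(U_{i}) < \frac{\ep}{8\pi},
$$
which is immediate from the definition of $\mathcal{H}^{1}$. After discarding any $U_{i}$ disjoint from $S$, I would pick $x_{i} \in U_{i}$ and enclose $U_{i}$ in the open Euclidean disk $D_{i} := B(x_{i}, 2 \diam(U_{i}))$; the factor $2$ (any constant strictly greater than $1$ would do) guarantees $U_{i} \subset D_{i}$ with open rather than closed inclusion.

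Next I would invoke compactness of $S$ to extract a finite subcover $D_{i_{1}}, \ldots, D_{i_{n}}$ of $S$ from the open cover $\{D_{i}\}$. Taking $\gamma_{j}$ to be the boundary circle $\partial D_{i_{j}}$, each $\gamma_{j}$ is a smooth (indeed real-analytic) simple closed curve, and by the Jordan curve theorem its interior is exactly $D_{i_{j}}$. By construction $S \subset \bigcup_{j=1}^{n} \mathrm{int}(\gamma_{j})$, and the total Euclidean length is bounded by
$$
\sum_{j=1}^{n} \length(\gamma_{j}) = 2\pi \sum_{j=1}^{n} 2\diam(U_{i_{j}}) \;\leq\; 4\pi \sum_{i} \diam(U_{i}) \;<\; \frac{\ep}{2} \;<\; \ep,
$$
which verifies the (R)-property.

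The argument is essentially a direct consequence of the definition of Hausdorff measure together with Heine--Borel, so no substantive obstacle arises. The only minor book-keeping is in the choice of constants: one needs the dilation factor in the radius of $D_{i}$ to be strictly greater than $1$ so that the $D_{i}$ form an \emph{open} cover of $S$. It is worth noting that possible overlaps among the chosen disks cause no difficulty, since the (R)-property does not demand pairwise disjointness of the enclosing curves or of their Jordan interiors.
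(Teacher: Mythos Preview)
Your proof is correct and follows essentially the same approach as the paper: use $\mathcal{H}^{1}_{E}(S)=0$ to produce a cover by sets of small total diameter, enclose each in an open disk, extract a finite subcover by compactness, and take the boundary circles. The only minor difference is that the paper takes the surrounding curves to be the boundaries of the \emph{unions} of the covering balls (which would require a word about smoothing at intersection points), whereas you simply take the individual boundary circles and note that the (R)-property does not require disjointness; your version is in fact slightly cleaner on this point.
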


\begin{proof}
First, since $S$ is compact,
we can take a finite covering of $S$
made of Euclidean balls $\{B(g_i, r_i)\}_{i=1,\ldots, m}$,
with $g_{i} \in S$ for every
$i= 1 ,\ldots,m$, and such that $\sum_{i=1}^{m}r_{i} \leq \ep$.
Enlarging these balls by a factor which will not depend on $\ep$, we
can assume that $S$ lies entirely in the interior of the union of
these balls. Now we define the surrounding curves,
as the boundaries of the unions of these
balls. By construction, the Euclidean length
of such curves is comparable to $\ep$.
\end{proof}

\begin{remark}
The converse of Lemma \ref{Rprop} also holds true.
In other words, the validity of the (R)-property for a compact set
$S\subset \mathbb{R}^{2}$ is equivalent to $\mathcal{H}_{E}^{1}(S)=0$.
\end{remark}

Before proving Theorem \ref{HGB}, let us make another useful remark.
Let us denote by $\Pi$ the projection
$\Pi : \heis \to \mathbb{R}^{2}$ onto the first two components. Consider
a surface $\Sigma \subset \heis$ as in the hypothesis of
Theorem \ref{HGB}. Then
$$
\mathcal{H}_{E}^{1}(C(\Sigma)) = \mathcal{H}_{E}^{1}(\Pi(C(\Sigma))).
$$
In particular, if $\mathcal{H}_{E}^{1}(C(\Sigma))=0$, then
its projection $\Pi(C(\Sigma))$ satisfies the (R)-property.

\begin{proof}[Proof of Theorem \ref{HGB}]
The proof of Theorem \ref{HGB} will be a combination of  different steps.\\

\textbf{Step 1}: First we consider the case of a regular surface without characteristic points. Precisely,
let $\Sigma \subset \heis$ be a regular surface
without characteristic points, and with
finitely many boundary components
$(\partial \Sigma)_i$, $i \in \{1, \ldots, n\}$,
given by Euclidean $C^{2}$-smooth closed curves
$\gamma_i : [0, 2\pi] \rightarrow (\partial \Sigma)_i$.
We may assume that none of the boundary components are fully horizontal.
Let $\cK_0$ be the sub-Riemannian Gaussian curvature of $\Sigma$,
and let $k_{\gamma_i,\Sigma}^{0,s}$ be
the sub-Riemannian signed geodesic curvature
of $\gamma_i$, for every $i \in \{1, \ldots, n\}$.
Then
$$\int_{\Sigma}\cK_0 \, d\mathcal{H}^{3}_{cc}
+\sum_{i=1}^{n} \int_{\gamma_i} k_{\gamma_i,\Sigma}^{0,s} \, d\dot{\gamma}_i = 0.$$
\begin{proof}[Proof of Step 1]
The results will follow passing to the limit as $L \to +\infty$ in
\begin{equation*}
\int_{\Sigma} \cK_{L}\, \dfrac{d\sigma_{L}}{\sqrt{L}}
 + \sum_{i=1}^{n}\int_{\gamma_i} k_{\gamma_i, \Sigma}^{L,s} \, \dfrac{d\dot{\gamma_i}_{L}}{\sqrt{L}}
= \dfrac{2 \pi \chi(\Sigma)}{\sqrt{L}}.
\end{equation*}
To do this, we need to apply Lebesgue's dominated convergence theorem.
Let us start with the integral of $\cK_L$.
We take a partition of unity $\{\varphi_{i}\}$, $i=1,\ldots, m$.
Calling $\Sigma_{i} := \textrm{supp}(\varphi_{i}) \cap \Sigma$, for every $i=1,\ldots,m$, we
have
$$\int_{\Sigma} \cK_{L}\, \dfrac{d\sigma_{L}}{\sqrt{L}}=
\sum_{i=1}^{m}\int_{\Sigma_i}\cK_{L} \varphi_{i}\, \dfrac{d\sigma_{L}}{\sqrt{L}}.$$
 Let us choose a parametrization of every $\Sigma_{i}$, $\psi_{i}:D_{i} \to \Sigma_{i}$,
then, for every $i=1, \ldots,m$, it holds that
$$\int_{\Sigma_i}\cK_{L}\, \varphi_{i}\, \dfrac{d\sigma_{L}}{\sqrt{L}} =
\int_{D_i}\cK_{L} \, \varphi_{i} \, |M_{L} \vec{n}|\, dv \, dw, $$
 where
\begin{equation*}
 M_{L}:= \left (
\begin{array}{ccc}
1 & 0 & -\tfrac{x_2}{2} \\
0 & 1 & \tfrac{x_1}{2} \\
0 & 0 & \tfrac{1}{\sqrt{L}}\\
\end{array}
\right),
\end{equation*}
 and $\vec{n}$ denotes the Euclidean normal vector to $\Sigma$.\\
 It is now sufficient to check
whether there exist two positive constants $M_1$ and $M_2$, independent on $L$,
such that
$$|\cK_{L}| \leq M_1, \quad \textrm{and} \quad |M_{L} \vec{n}|\leq M_2.$$
 The second estimate is proved in Proposition \ref{LSmeas}, see \cite{CDPT}.
For the first one, we recall that the explicit expression of $\cK_L$ is given by
\begin{equation}
\begin{aligned}
\cK_L & = -L \bar{r}_{L}^{2} - \left( \dfrac{l}{l_L}\right)^{3} (X_1 \bar{p} + X_2 \bar{q}) \scal{E_2}{\hgrad \left( \dfrac{r}{l}\right)}_{L} + \dfrac{l}{l_L}(X_1 \bar{p} + X_2 \bar{q}) X_{3}^{L}\bar{r}_L\\
& - \left( \dfrac{l}{l_L}\right)^{4} \left( \scal{E_1}{\hgrad \left( \dfrac{r}{l}\right)}_{L} \right)^{2} - \sqrt{L}\left( \dfrac{l}{l_L}\right)^{2}\scal{E_1}{\hgrad \left( \dfrac{r}{l}\right)}_{L}.
\end{aligned}
\end{equation}
 Since
$l_{L} = \|\nabla_{L}u \|_{L} = \sqrt{(X_1 u)^{2}+(X_2 u)^{2}+ \left( \dfrac{X_3 u}{\sqrt{L}} \right)^{2}}
\geq \|\hgrad u\|_{\he},$
we have that
$$
\dfrac{l}{l_L} = \dfrac{\|\hgrad u\|_{\he}}{\| \nabla_{L}u\|_{L}} \leq 1.
$$
Since $L>1$, it also holds that
$$l_L \leq \sqrt{(X_1 u)^{2}+(X_2 u)^{2}+(X_3 u)^{2}}.$$
 Moreover, since $u \in C^{2}(\reals^{3})$ and
$\Sigma$ is a compact surface without characteristic points,
there exists a positive constant $C_1 >0$ such that
\begin{equation*}
|X_1 \bar{p} + X_2 \bar{q}| = \left| X_1 \left( \dfrac{X_{1}u}{\|\hgrad u\|_{\he}}\right) + X_2 \left( \dfrac{X_{2}u}{\|\hgrad u\|_{\he}}\right)\right| \leq C_1.
\end{equation*}

Therefore we have the following list of estimates:
\begin{equation*}
  L \bar{r}_{L}^{2} = L \dfrac{(X_{3}^{L}u)^{2}}{l_{L}^{2}} \leq \left( \dfrac{X_3 u}{\|\hgrad u\|_{\he}}\right)^{2} \leq C_2.
\end{equation*}
\begin{equation*}
 \begin{aligned}
\left| \scal{E_2}{\hgrad \left( \dfrac{r}{l}\right)}_{L} \right| &= \left| \bar{r}_L \bar{p} X_1 \left(\dfrac{r}{l}\right) +\bar{r}_L \bar{q} X_2 \left(\dfrac{r}{l}\right) \right| \\
&= \dfrac{|X_3 u|}{L} \left| \dfrac{X_1 u}{\|\hgrad u\|_{\he}} X_1 \left( \dfrac{X_3 u}{\|\hgrad u\|_{\he}} \right) + \dfrac{X_2 u}{\|\hgrad u\|_{\he}} X_2 \left( \dfrac{X_3 u}{\|\hgrad u\|_{\he}} \right) \right|\\
& \leq |X_3 u| \,  \left| \dfrac{X_1 u}{\|\hgrad u\|_{\he}} X_1 \left( \dfrac{X_3 u}{\|\hgrad u\|_{\he}} \right) + \dfrac{X_2 u}{\|\hgrad u\|_{\he}} X_2 \left( \dfrac{X_3 u}{\|\hgrad u\|_{\he}} \right) \right| \leq C_3.
 \end{aligned}\end{equation*}
\begin{equation*}
 \begin{aligned}
  \sqrt{L} \left| \scal{E_1}{\hgrad \left( \dfrac{r}{l}\right)}_{L} \right| & = \sqrt{L} \left| \bar{q} X_{1}\left( \dfrac{X_3 u}{\sqrt{L}\|\hgrad u\|_{\he}}\right)
  -\bar{p} X_{2}\left( \dfrac{X_3 u}{\sqrt{L}\|\hgrad u\|_{\he}} \right)\right|\\
  &\leq \left| \dfrac{X_2 u}{\|\hgrad u\|_{\he}}  X_{1}\left( \dfrac{X_3 u}{\|\hgrad u\|_{\he}}\right)\right| +\left| \dfrac{X_1 u}{\|\hgrad u\|_{\he}}  X_{2}\left( \dfrac{X_3 u}{\|\hgrad u\|_{\he}}\right)\right| \leq C_4.
 \end{aligned}\end{equation*}
Similarly,
$$|X_{3}^{L}\bar{r}_{L}| \leq C_5.$$
Altogether,
$$|\cK_{L}|\leq C_2 + C_1 \cdot C_3 + C_1 \cdot C_5 + C_{4}^{2} + C_4 =: M_1,$$
as desired.

 It remains to see what happens for the boundary integrals.
Without loss of generality, let us assume we are given
a surface $\Sigma$ with only one boundary component,
given by a smooth curve $\gamma$.
We need to estimate
$$\left| \dfrac{\scal{D_{t}^{\Sigma}\dot{\gamma}}{J_{L}(\dot{\gamma})}_{L}}{\sqrt{L} \|\dot{\gamma}\|_{L}^{2}}\right|,$$
 where
\begin{equation*}
 \begin{aligned}
 \scal{D_{t}^{\Sigma}\dot{\gamma}}{J_{L}(\dot{\gamma})}_{L} & = \bar{r}_L (\dot{\gamma}_1 \ddot{\gamma}_2 - \dot{\gamma}_2 \ddot{\gamma}_1) \\
 &+ \sqrt{L} \dfrac{l}{l_L} \left[ \omega(\dot{\gamma}) \left( \bar{q}\ddot{\gamma}_1 - \bar{p}\ddot{\gamma}_2\right) + \omega(\ddot{\gamma}) \left( \bar{q}\dot{\gamma}_1 - \bar{p}\dot{\gamma}_2\right) \right]\\
 &- L \omega(\dot{\gamma}) \bar{r}_L (2\dot{\gamma}_1 \dot{\gamma}_2) + L^{3/2} \dfrac{l}{l_L} \omega(\dot{\gamma})^{2}(\bar{p} \dot{\gamma}_1 + \bar{q}\dot{\gamma}_2),
 \end{aligned}\end{equation*}
 and
$$\|\dot{\gamma}\|_{L}^{2} = \dot{\gamma}_{1}^{2}+ \dot{\gamma}_{2}^{2} + L \omega(\dot{\gamma})^{2}.$$
Note that there exists a positive constant $C_0 >0$, such that
$$\dot{\gamma}_{1}^{2}+ \dot{\gamma}_{2}^{2} + L \omega(\dot{\gamma})^{2} \geq \dot{\gamma}_{1}^{2}+ \dot{\gamma}_{2}^{2} + \omega(\dot{\gamma})^{2} \geq C_0 >0.$$
Now, we have
\begin{equation*}
 \begin{aligned}
  \dfrac{|\bar{r}_L (\dot{\gamma}_1 \ddot{\gamma}_2 - \dot{\gamma}_2 \ddot{\gamma}_1)|}{\sqrt{L}(\dot{\gamma}_{1}^{2}+ \dot{\gamma}_{2}^{2} + L \omega(\dot{\gamma})^{2})}
  & \leq \dfrac{|\dot{\gamma}_1 \ddot{\gamma}_2 - \dot{\gamma}_2 \ddot{\gamma}_1||X_3 u|}{L \|\hgrad u\|_{\he} (\dot{\gamma}_{1}^{2}+ \dot{\gamma}_{2}^{2} + L \omega(\dot{\gamma})^{2})}\\
  &\leq \dfrac{|X_3 u|}{\|\hgrad u\|_{\he}} \, \dfrac{|\dot{\gamma}_1 \ddot{\gamma}_2 - \dot{\gamma}_2 \ddot{\gamma}_1|}{\dot{\gamma}_{1}^{2}+ \dot{\gamma}_{2}^{2} + \omega(\dot{\gamma})^{2}}\leq C_6.
 \end{aligned}\end{equation*}
\begin{equation*}
  \sqrt{L} \dfrac{l}{l_L} \dfrac{|\left[ \omega(\dot{\gamma}) \left( \bar{q}\ddot{\gamma}_1 - \bar{p}\ddot{\gamma}_2\right) + \omega(\ddot{\gamma}) \left( \bar{q}\dot{\gamma}_1 - \bar{p}\dot{\gamma}_2\right) \right]|}{\sqrt{L}(\dot{\gamma}_{1}^{2}+ \dot{\gamma}_{2}^{2} + L \omega(\dot{\gamma})^{2})}
  \leq \dfrac{|\omega(\dot{\gamma})| |\bar{q} \ddot{\gamma}_2 - \bar{p} \ddot{\gamma}_2| + |\omega(\ddot{\gamma})| |\bar{q}\dot{\gamma}_1 - \bar{p}\dot{\gamma}_2 |}{\dot{\gamma}_{1}^{2}+ \dot{\gamma}_{2}^{2} + \omega(\dot{\gamma})^{2}}\leq C_7.
\end{equation*}
\begin{equation*}
 \dfrac{2 L |\omega(\dot{\gamma})| |\bar{r}_L| |\dot{\gamma}_1 \dot{\gamma}_2|}{\sqrt{L}(\dot{\gamma}_{1}^{2}+ \dot{\gamma}_{2}^{2} + L \omega(\dot{\gamma})^{2})}
 \leq \dfrac{2 |\omega(\dot{\gamma})| |X_3 u| |\dot{\gamma}_1 \dot{\gamma}_2|}{\|\hgrad u\|_{\he}\, (\dot{\gamma}_{1}^{2}+ \dot{\gamma}_{2}^{2} + \omega(\dot{\gamma})^{2})}\leq C_8.
\end{equation*}
\begin{equation*}
  \dfrac{l}{l_L} \, \dfrac{L^{3/2} \omega(\dot{\gamma})^{2} |\bar{p}\dot{\gamma}_1 + \bar{q}\dot{\gamma}_2|}{\sqrt{L}(\dot{\gamma}_{1}^{2}+ \dot{\gamma}_{2}^{2} + L \omega(\dot{\gamma})^{2})}
 \leq \dfrac{L^{3/2} \omega(\dot{\gamma})^{2}}{L^{3/2} \omega(\dot{\gamma})^{2}} |\bar{p}\dot{\gamma}_1 + \bar{q}\dot{\gamma}_2|
 = |\bar{p}\dot{\gamma}_1 + \bar{q}\dot{\gamma}_2|\leq C_9.
\end{equation*}
The behavior of the measure has been already treated in Lemma \ref{Lmeas}.
\end{proof}

\textbf{Step 2}: Due to Lemma \ref{Rprop}, we can surround the projection of
the characteristic
set $\Pi(C(\Sigma))$ with smooth simple closed curves
$\{\beta_{j}\}_{j=1,\ldots,n(\ep)}$
such that
\begin{equation}\label{eq:Rproof}
\sum_{j=1}^{n(\ep)}\mathrm{length}_{E}(\beta_{j}) \leq \ep.
\end{equation}
 We can now work with a new surface $\Sigma_{\ep}$
which has no characteristic
points, and boundary components which are given by the curves
$\gamma_{i}$'s and the curves $\beta_{j}$'s. Step 1
tells us that for every $\ep >0$,
\begin{equation*}
\int_{\Sigma_{\ep}}\cK_0 \, d\mathcal{H}_{cc}^{3} +
\sum_{i=1}^{n}\int_{\gamma_{i}}k_{\gamma_{i}, \Sigma_{\ep}}^{0,s} \, d\dot{\gamma}_{i}
= -\sum_{j=1}^{n(\ep)}k_{\beta_{j}, \Sigma_{\ep}}^{0,s} \, d\dot{\beta}_{j} ,
\end{equation*}
which, combined with (\ref{eq:Rproof}), implies that for every $\ep >0$,
\begin{equation*}
\left| \int_{\Sigma_{\ep}} \cK_0 \, d\mathcal{H}_{cc}^{3}+
\sum_{i=1}^{n}\int_{\gamma_{i}}k_{\gamma_{i}, \Sigma_{\ep}}^{0,s} \, d\dot{\gamma}_{i}\right| =
\left| \sum_{j=1}^{n(\ep)}k_{\beta_{j}, \Sigma_{\ep}}^{0,s} \, d\dot{\beta}_{j} \right| \leq \ep,
\end{equation*}
 and this completes the proof.
\end{proof}

\begin{corollary}
Let $\Sigma\subset \heis$ be a regular surface without boundary, or
with boundary components given by Euclidean $C^{2}$-smooth horizontal curves.
Assume that the characteristic set $C(\Sigma)$ satisfies $\mathcal{H}^{1}(C(\Sigma))=0$
and that $\|\hgrad u\|_{\he}^{-1}$ is locally summable with respect to
the Euclidean 2-dimensional Hausdorff measure, near the characteristic set $C(\Sigma)$.
Then the sub-Riemannian Gaussian curvature $\cK_0$ cannot be an always positive or negative function.
In particular, $\Sigma$ cannot have constant non-zero sub-Riemannian Gaussian curvature $\cK_0$.
\end{corollary}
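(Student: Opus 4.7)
The plan is to apply Theorem \ref{HGB} directly and then argue by contradiction.

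First, I observe that under the hypotheses of the corollary, the boundary contribution in the Gauss--Bonnet formula vanishes. If $\Sigma$ has no boundary, there is nothing to check. If the boundary components $\gamma_i$ are Euclidean $C^2$-smooth horizontal curves, then $\omega(\dot{\gamma}_i)(t) \equiv 0$ for every $t$, so by Lemma \ref{Lmeas} the limit measure
\[
d\dot{\gamma}_i = |\omega(\dot{\gamma}_i)|\, dt
\]
vanishes identically, and hence each boundary integral $\int_{\gamma_i} k_{\gamma_i,\Sigma}^{0,s}\, d\dot{\gamma}_i$ is zero (this is exactly the remark made after Lemma \ref{Lmeas}). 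Since the hypotheses $\mathcal{H}^1_E(C(\Sigma))=0$ and local summability of $\|\hgrad u\|_\he^{-1}$ are precisely those needed in Theorem \ref{HGB}, that theorem applies and yields
\[
\int_{\Sigma}\cK_0\, d\mathcal{H}^3_{cc} = 0.
\]

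Next I argue by contradiction. Suppose $\cK_0 > 0$ everywhere on $\Sigma \setminus C(\Sigma)$ (the case $\cK_0 < 0$ is symmetric). Because $\Sigma$ is a compact Euclidean $C^2$-smooth regular surface with $\mathcal{H}^1_E(C(\Sigma)) = 0$, the non-characteristic part $\Sigma \setminus C(\Sigma)$ is a nonempty open subset of $\Sigma$ of positive $\mathcal{H}^3_{cc}$-measure (the measure $d\mathcal{H}^3_{cc}$ is absolutely continuous with respect to the Euclidean surface measure on $\Sigma$ via the density $\|\hgrad u\|_\he / \|\nabla_{\reals^3} u\|_{\reals^3}$, as recalled in Proposition \ref{LSmeas}, and this density is strictly positive away from $C(\Sigma)$). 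Hence $\int_\Sigma \cK_0\, d\mathcal{H}^3_{cc} > 0$, contradicting the vanishing of the integral. For the constant case, if $\cK_0 \equiv c \ne 0$ on $\Sigma \setminus C(\Sigma)$, then $\int_\Sigma \cK_0\, d\mathcal{H}^3_{cc} = c \cdot \mathcal{H}^3_{cc}(\Sigma) \ne 0$, again a contradiction.

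There is no serious obstacle here: the entire content of the corollary is packaged inside Theorem \ref{HGB} together with the vanishing of the scaled length measure on horizontal curves. The only point requiring a small verification is that, under our regularity and measure-zero hypotheses on $C(\Sigma)$, the sub-Riemannian surface measure $d\mathcal{H}^3_{cc}$ does not concentrate on $C(\Sigma)$ and assigns positive mass to $\Sigma \setminus C(\Sigma)$, which follows immediately from the representation of $d\mathcal{H}^3_{cc}$ recalled in Proposition \ref{LSmeas}.
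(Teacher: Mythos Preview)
Your argument is correct and follows essentially the same route as the paper: apply Theorem \ref{HGB} to get $\int_\Sigma \cK_0\, d\mathcal{H}^3_{cc}=0$ (the boundary term vanishes for horizontal curves), and conclude that $\cK_0$ cannot have a sign. The only cosmetic difference is that the paper kills the boundary term by invoking $k_{\gamma,\Sigma}^{0,s}=0$ on horizontal curves (Proposition \ref{SGeoCur}) rather than the vanishing of $d\dot\gamma$, but either observation suffices.
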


\begin{proof}
If $\Sigma$ has no boundary, then by Theorem \ref{HGB} we have
$$\int_{\Sigma}\cK_0\, d\mathcal{H}_{cc}^{3}=0,$$
and therefore $\cK_0$ cannot have a sign.\\
The same holds for boundary components given by
horizontal curves because the sub-Riemannian signed curvature
$k_{\gamma,\Sigma}^{0,s}$ of
a horizontal curve $\gamma$ is $0$.
\end{proof}

\begin{remark}
At the moment we do not know any example of a regular surface with
sub-Riemannian Gaussian curvature $\cK_0$ constantly equal to zero.
On the other hand, if we remove the requirement of $\Sigma$ being
compact, all vertically ruled (smooth) surfaces have $\cK_0=0$ at every point.
\end{remark}

The following examples show that the assumption
made in Theorem \ref{HGB} about
the 1-dimensional Euclidean Hausdorff measure
of the characteristic set $C(\Sigma)$ is sharp.

\begin{example}
 Let $\Sigma = \{ (x_1,x_2,x_3)\in \heis: u(x_1,x_2,x_3)=0 \}$, with
\begin{equation}
u(x_1,x_2,x_3)=
 \left \{
 \begin{array}{rl}
 x_3 - \dfrac{x_1 x_2}{2} +  x_2\, \mathrm{exp}((x_1+1)^{-2}), & x_1 < 1, x_2,x_3 \in \mathbb{R}, \\
 x_3 - \dfrac{x_1 x_2}{2}, & x_1 \in [-1, 1], x_2,x_3 \in \mathbb{R}, \\
 x_3 - \dfrac{x_1 x_2}{2} +  x_2\, \mathrm{exp}((x_1-1)^{-2}), & x_1 > 1, x_2,x_3 \in \mathbb{R} .
 \end{array}\right.
 \end{equation}
We have that
$$C(\Sigma) = \{ (x_1,0,0): x_1\in [-1,1]\}.$$
The idea now is to consider the projection of $C(\Sigma)$ and to surround it
with a curve in $\mathbb{R}^{2}$. Then, we will lift it to the surface $\Sigma$,
exploiting that $\Sigma$ is given by a graph.
For $\ep >0$, define
$$\gamma(\theta) := \bigcup_{i=1}^{5}\gamma_{i}(\theta),\quad \mbox{$\theta \in [0,2\pi]$},$$
where
$$\gamma_{1}(\theta):= \left( 1+ \ep \cos\left(\dfrac{\pi}{2 \tan(\ep)} \theta\right), \ep \sin\left(\dfrac{\pi}{2 \tan(\ep)} \theta\right)\right), \quad \mbox{$\theta \in [0,\tan(\ep))$},$$
$$\gamma_{2}(\theta):= \left(-\dfrac{2}{\pi-2 \tan(\ep)}\theta + \dfrac{\pi}{\pi - 2\tan(\ep)}, \ep \right), \quad \mbox{$\theta \in [\tan(\ep), \pi - \tan(\ep))$},$$
$$\gamma_{3}(\theta):= \left( -1+ \ep \cos\left(\alpha(\theta)\right), \ep \sin\left(\alpha(\theta)\right)\right), \quad \mbox{$\theta \in [\pi - \tan(\ep),\pi+\tan(\ep))$},$$
$$\gamma_{4}(\theta):= \left(t(\theta), -\ep \right), \quad \mbox{$\theta \in [\pi+\tan(\ep), 2\pi - \tan(\ep))$},$$
$$\gamma_{5}(\theta):= \left( 1+ \ep \cos\left(\beta(\theta)\right), \ep \sin\left(\beta(\theta)\right)\right), \quad \mbox{$\theta \in [2\pi - \tan(\ep),2\pi)$},$$
 with
$$\alpha(\theta):= \dfrac{\pi}{2 \tan(\ep)} \theta + \pi - \dfrac{\pi^{2}}{2\tan(\ep)},$$
$$t(\theta):= \dfrac{2}{\pi - 2 \tan(\ep)}\theta - \dfrac{3 \pi}{\pi - 2 \tan(\ep)},$$
$$\beta(\theta):= \dfrac{\pi}{\pi - 2\tan(\ep)}\theta + 2\pi \left( 1 - \dfrac{\pi}{2 \tan(\ep)}\right).$$
Now we need to control the integral of the signed curvature.
It will be made of five pieces. Three of them (namely for $i=1,3,5$), will behave exactly like when we
deal with isolated characteristic points, because the velocity of those parts goes to $0$ as $\ep \to 0$.
It remains to check the other two integrals. We can re-parametrize those two parts as follows:
$$\gamma_{2}(s) = ( -s, \ep), \quad \mbox{and} \quad \gamma_{4}(s)=(s,-\ep), \quad s \in [-1,1].$$
Because of the results concerning the signed curvature, it does not matter what happens to the
third component. In this situation, we get that
$$\dot{\gamma}_{2}(s) = (-1,0), \quad \mbox{and} \quad \dot{\gamma}_{4}(s)=(1,0), \quad s \in [-1,1],$$
 and
$$\bar{p}(x_1,x_2,x_3) = -\dfrac{x_2}{|x_2|}, \quad \quad \bar{q}(x_1,x_2,x_3) = 0.$$
Therefore $\bar{p}|_{\gamma_{2}} = 1$ and $\bar{p}|_{\gamma_{4}} = -1$.
Then
$$\int_{-1}^{1} \bar{p}|_{\gamma_2} (\dot{\gamma}_2)_1 \, ds + \int_{-1}^{1} \bar{p}|_{\gamma_4} (\dot{\gamma}_4)_1 \, ds = -2.$$
\end{example}

\begin{example}
Let $\Sigma = \{(x_1,x_2,x_3)\in \heis: x_3 = \tfrac{x_1 \, x_2}{2}\}$.
The projection of the characteristic set $\Pi(C(\Sigma))$ is the 1-dimensional
line $\{x_2 =0\}$. Consider the curve
$$\gamma(t) = \left( \cos(t), \sin(t), \dfrac{\sin(2t)}{4}\right), \quad t\in [0,2\pi],$$
which lives on the surface $\Sigma$ as boundary component of a
new surface $\tilde{\Sigma}$ which is now bounded. Simple computations
show that in this case
$$\cK_0 = 0, \quad k_{\gamma, \tilde{\Sigma}}^{0,s} = \dfrac{1}{|\sin(t)|},
\quad \omega(\dot{\gamma})= \dfrac{\cos(2t)}{2} - \dfrac{1}{2}.$$
Therefore,
$$\int_{\tilde{\Sigma}}\cK_0 \, d\mathcal{H}_{cc}^{3} + \int_{\gamma}k_{\gamma, \tilde{\Sigma}}^{0,s} \, d\dot{\gamma} = 4,$$
in contrast with the statement of Theorem \ref{HGB}.
\end{example}

We end this section with an explicit example.

\begin{example}[Kor\'{a}nyi sphere]
Consider the Kor\'{a}nyi sphere $\mathbb{S}_{\heis}$,
$$ \mathbb{S}_{\heis} := \{ (x_1,x_2,x_3) \in \heis : (x_{1}^2 + x_{2}^2)^2 + 16 x_{3}^2 -1 =0 \}.$$
A parametrization of  $\mathbb{S}_{\heis}$ is
$$f(\varphi,\theta) :=
\left( \sqrt{\cos(\varphi)} \cos(\theta), \sqrt{\cos(\varphi)} \sin(\theta) , \dfrac{\sin(\varphi)}{4}\right),
\quad \mbox{$\varphi \in \left(-\dfrac{\pi}{2},\dfrac{\pi}{2}\right)$, $\theta \in [0,2\pi)$}.$$
In particular, recalling the definition (\ref{eq:M}) of $M$ and after some computations, we get that
$$\| M (f_{\varphi} \times f_{\theta})\|_{\mathbb{R}^{2}} = \dfrac{\sqrt{\cos(\varphi)}}{4}.$$
A direct computation shows that the sub-Riemannian Gaussian curvature of $\mathbb{S}_{\heis}$ is
given by
\begin{equation}\label{eq:K0Kor}
\cK_0 = -\dfrac{2}{x_{1}^2 + x_{2}^2} + 6 (x_{1}^2 + x_{2}^2) = -\dfrac{2}{\cos(\varphi)} + 6 \cos(\varphi),
\end{equation}
which is locally summable around the isolated characteristic points
with respect to the the Heisenberg perimeter measure.

Thus, by a special instance of Theorem \ref{HGB}
\begin{equation}\label{simplified-GM}
\int_{\mathbb{S}_{\heis}}\cK_0 \, d\mathcal{H}^{3}_{cc} =0.
\end{equation}
Equation \ref{simplified-GM} can also be verified directly using \eqref{eq:K0Kor}.
\end{example}

Further examples can be found in the appendix.

\section{Application: a simplified Steiner's formula}\label{examples}

The main application of the Gauss--Bonnet Theorem concerns
a simplification to the Steiner's formula recently proved in \cite{BFFVW} {\it cc} neighborhoods of
surfaces without characteristic points. Let us first recall some notation and background from \cite{BFFVW}.

Let $\Omega \subset \heis$ be an open, bounded and regular domain with
smooth boundary $\bound$. Let $\del_{cc}$ be the signed $cc$-distance function from $\bound$.
The $\ep$-neighborhood $\Omega_{\ep}$ of $\Omega$ with respect to
the $cc$-metric is given by
\begin{equation}\label{eq:neicha}
\Omega_{\ep}:= \Omega \cup \left\{ g \in \heis : 0 \leq \del_{cc}(g) < \ep\right\}.
\end{equation}

We define the \textit{iterated divergences} of $\del_{cc}$ as follows:
\begin{equation*}
\divh^{(i)}(\del_{cc}) =\left\{\begin{array}{l}
 1, \quad \quad \quad \quad \quad \quad \quad \quad \quad \quad \mbox{$i=0$} \\
\divh(\divh^{(i-1)}\del_{cc} \cdotp \hgrad \del_{cc}), \quad \quad \mbox{$i \geq 1$}
            \end{array}\right.
\end{equation*}
Finally we put
\begin{equation}\begin{split}\label{ABCDE}
&A:= \hlapl \del_{cc}, \quad \quad B:=- (X_{3}\del_{cc})^{2}, \quad \quad C:=(X_{1}\del_{cc})(X_{32}\del_{cc})-(X_{2}\del_{cc})(X_{31}\del_{cc}), \\
&D:= X_{33}\del_{cc},\quad \quad E:=(X_{31}\del_{cc})^{2}+(X_{32}\del_{cc})^{2}.
\end{split}\end{equation}

In order to make the paper self-contained, we recall a technical result from \cite{BFFVW}.
Define the operator $g$ acting on smooth real valued functions as
$$
g(\alpha):= \langle \hgrad \alpha, \hgrad \del \rangle.
$$
For a real-valued function $h$ we have
\begin{equation}\label{div-of-h-formula}
\divh(h\,\nabla_H\delta_{cc}) = h \, \hlapl \del_{cc} + \scal{\hgrad h}{\hgrad \del_{cc}} = h \, A + g(h).
\end{equation}
Note that $g$ is linear and satisfies the Leibniz rule, i.e.
\begin{align*}
&g(\alpha + \beta)  = g(\alpha)+g(\beta), \\
&g(\alpha \, \beta)  = g(\alpha) \beta + \alpha g(\beta).
\end{align*}

The following lemma holds (see \cite{BFFVW}, Lemma 4.2). The proof involves a number of lengthy calculations using higher derivatives of the {\it cc} distance function.

\begin{lemma}\label{itdiv}
The following relations hold:
\begin{align}
g(1)&= 0 \label{eq:g1},\\
g(A)&= B+2C -A^{2}\label{eq:gA},\\
g(B)&= 0 \label{eq:gB},\\
g(C)&= D - AC \label{eq:gC},\\
g(D)&= -E \label{eq:gD},\\
g(E)&= -2AE+2CD \label{eq:gE}.
\end{align}
\end{lemma}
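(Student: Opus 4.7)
The guiding observation is the eikonal equation $\|\hgrad \del_{cc}\|_\he^2 = (X_1\del_{cc})^2 + (X_2\del_{cc})^2 = 1$, satisfied by the Carnot--Carath\'eodory distance wherever $\del_{cc}$ is smooth. Writing $p := X_1\del_{cc}$, $q := X_2\del_{cc}$, $r := X_3\del_{cc}$, the operator $g$ becomes $g(\alpha) = p\,X_1\alpha + q\,X_2\alpha$, and \eqref{eq:g1} is immediate. My first step would be to apply $X_1$, $X_2$, and $X_3$ in turn to the eikonal equation, producing the three pivotal identities
\[
p\,X_{11}\del_{cc} + q\,X_{12}\del_{cc} = 0, \quad p\,X_{21}\del_{cc} + q\,X_{22}\del_{cc} = 0, \quad p\,X_{31}\del_{cc} + q\,X_{32}\del_{cc} = 0
\]
(the third one using $[X_1,X_3] = [X_2,X_3] = 0$). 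Combined with the commutator relation $X_{12}\del_{cc} - X_{21}\del_{cc} = r$ coming from $[X_1,X_2]=X_3$, these three identities rearrange directly into the building blocks
\[
g(p) = -qr, \qquad g(q) = pr, \qquad g(r) = 0.
\]

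The identity $g(r) = 0$ proves \eqref{eq:gB} via $g(-r^2) = -2r\,g(r) = 0$. For \eqref{eq:gD}, since $X_3$ commutes with $X_1$ and $X_2$, I can push the outer derivatives through $X_{33}$ and apply Leibniz to $X_3$, obtaining
\[
g(D) = X_3\bigl(p\,X_{31}\del_{cc} + q\,X_{32}\del_{cc}\bigr) - (X_3 p)X_{31}\del_{cc} - (X_3 q)X_{32}\del_{cc} = -E,
\]
since the first term vanishes by the third pivotal relation, and $X_3 p = X_{31}\del_{cc}$, $X_3 q = X_{32}\del_{cc}$ turn the remaining terms into $-E$.

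The three remaining identities are proved by the same general recipe: expand using Leibniz until $g$ acts only on first- and second-order derivatives of $\del_{cc}$, commute $X_1$ and $X_2$ past $X_3$ freely and past each other at the cost of an $X_3$-correction, and finally collapse the resulting sums through the pivotal relations. For \eqref{eq:gC}, the Leibniz expansion produces a scalar multiple of the third pivotal relation (yielding $0$), a $(p^2+q^2)X_{33}\del_{cc} = D$ term, and cross-terms which telescope to $-AC$ via the first two pivotal relations. The main obstacle is the bookkeeping for \eqref{eq:gA} and \eqref{eq:gE}, which involve third-order derivatives of $\del_{cc}$. For \eqref{eq:gA}, iterating $[X_1,X_2] = X_3$ gives $X_1 X_2 X_2 - X_2 X_2 X_1 = 2\,X_3 X_2$, producing the $2C$ term, while applying $X_{11}$ and $X_{22}$ to the eikonal equation reduces the remaining third-derivative sums to $-\sum_{i,j = 1,2}(X_{ij}\del_{cc})^2$; the final step is the consequence $X_{11}\del_{cc} \cdot X_{22}\del_{cc} = X_{12}\del_{cc}\cdot X_{21}\del_{cc}$ of the first two pivotal relations, which recasts this sum as $-A^2 - (X_{12}\del_{cc} - X_{21}\del_{cc})^2 = -A^2 - r^2 = -A^2 + B$. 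Identity \eqref{eq:gE} follows analogously, with $g(X_{31}\del_{cc})$ and $g(X_{32}\del_{cc})$ computed by the same Leibniz-and-commute strategy used for $g(D)$, and the resulting terms then assembling into $-2AE + 2CD$ after one further invocation of the three pivotal relations.
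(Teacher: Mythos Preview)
Your proposal is correct and is precisely the kind of argument the paper has in mind. The paper does not actually supply a proof of this lemma; it simply cites Lemma~4.2 of \cite{BFFVW} and remarks that ``the proof involves a number of lengthy calculations using higher derivatives of the {\it cc} distance function.'' Your outline---differentiating the eikonal equation to obtain the pivotal relations, deducing $g(p)=-qr$, $g(q)=pr$, $g(r)=0$, and then iterating Leibniz and the commutator $[X_1,X_2]=X_3$---is exactly such a calculation, organized efficiently. The key algebraic step you isolate for \eqref{eq:gA}, namely $X_{11}\del_{cc}\cdot X_{22}\del_{cc}=X_{12}\del_{cc}\cdot X_{21}\del_{cc}$, follows cleanly from the first two pivotal relations, and the remaining identities \eqref{eq:gC} and \eqref{eq:gE} do collapse as you describe once one applies $X_3$ to those same pivotal relations.
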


We now state the main result of this section, which gives a simplification to the main theorem of \cite{BFFVW} (a Steiner's formula for the {\it cc} distance function).

\begin{theorem}\label{Torus}
Let $\Omega \subset \heis$ be an open, bounded and regular domain whose boundary
$\bound$ is a Euclidean $C^{2}$-smooth compact and oriented surface
 with no characteristic points. For sufficiently small $\ep >0$ define the $\ep$-neighborhood $\Omega_{\ep}$
 of $\Omega$ with respect to the $cc$-metric as in (\ref{eq:neicha}). Then
 \begin{equation*}
 \begin{aligned}
\leb^{3}(\Omega_{\ep})&= \leb^{3}(\Omega) + \ep \mathcal{H}_{cc}^{3}(\bound) +  \dfrac{\ep^{2}}{2}\int_{\bound} A \, d\mathcal{H}_{cc}^{3} +
\dfrac{\ep^{3}}{3!}\int_{\bound}C\, d\mathcal{H}_{cc}^{3} +\\
&\sum_{j=1}^{+\infty}\left( \int_{\bound} (B^{j-1}D) \, d\mathcal{H}_{cc}^{3}\right) \dfrac{\ep^{2j+2}}{(2j+2)!} +
\sum_{j=1}^{+\infty}\left( \int_{\bound} (B^{j-1}(AD-E)) \, d\mathcal{H}_{cc}^{3}\right) \dfrac{\ep^{2j+3}}{(2j+3)!},
\end{aligned} \end{equation*}
where $A$, $B$, $C$, $D$ and $E$ are given as in \eqref{ABCDE}.
\end{theorem}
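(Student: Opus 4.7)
Since $\partial\Omega$ is $C^2$-smooth with no characteristic points, for $\epsilon>0$ small the signed {\it cc}-distance $\del_{cc}$ is $C^2$ on $\Omega_\epsilon \setminus \Omega$ with $\|\hgrad \del_{cc}\|_{\he} \equiv 1$, and every level set $\partial\Omega_t := \{\del_{cc}=t\}$, $t\in[0,\epsilon)$, is a closed, oriented $C^2$ surface without characteristic points. The horizontal coarea formula gives
\[
\leb^3(\Omega_\epsilon) - \leb^3(\Omega) = \int_0^\epsilon P(t)\, dt, \qquad P(t) := \mathcal{H}_{cc}^3(\partial\Omega_t),
\]
reducing the proof to Taylor-expanding $P$ at $0$. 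Applying the sub-Riemannian divergence theorem on $\Omega_t \setminus \Omega$ to the vector field $h\,\hgrad \del_{cc}$ (whose horizontal-normal flux on each $\partial\Omega_s$ equals $h$, since $\hgrad \del_{cc}$ is the horizontal unit normal), together with \eqref{div-of-h-formula}, yields the transport identity
\[
\frac{d}{dt}\int_{\partial\Omega_t} h\, d\sigma_{cc} = \int_{\partial\Omega_t} L(h)\, d\sigma_{cc}, \qquad L(h) := hA + g(h),
\]
valid for any $C^2$ function $h$. Iterating with $h=1$ produces the formal expansion
\[
\leb^3(\Omega_\epsilon) = \leb^3(\Omega) + \sum_{k\ge 0} \frac{\epsilon^{k+1}}{(k+1)!} \int_{\partial\Omega} h_k\, d\sigma_{cc}, \qquad h_k := L^k(1).
\]

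Using Lemma \ref{itdiv} one computes $h_0=1$, $h_1=A$, $h_2=B+2C$, $h_3=AB+2D$, $h_4=2AD+B^2+2BC-2E$, and so on; these do not immediately match the form claimed in the theorem. Introduce the auxiliary sequence $\tilde h_0=1$, $\tilde h_1=A$, $\tilde h_2=C$, and $\tilde h_{k+1}=L(\tilde h_k)$ for $k\ge 2$. Repeated application of Lemma \ref{itdiv} yields the stable chain
\[
L(C) = D, \quad L(D) = AD-E, \quad L(B^j D) = B^j(AD-E), \quad L(B^j(AD-E)) = B^{j+1}D, \quad j\ge 0,
\]
so that $\tilde h_2, \tilde h_3, \tilde h_4,\ldots$ are precisely $C,\, D,\, AD-E,\, BD,\, B(AD-E),\, B^2 D,\ldots$, the integrands appearing in the theorem. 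A short induction using linearity of $L$ then gives the key algebraic identity
\[
h_k - \tilde h_k = L^{k-2}(B+C), \qquad k\ge 2.
\]

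It therefore suffices to prove $\int_{\partial\Omega} L^j(B+C)\, d\sigma_{cc} = 0$ for every $j\ge 0$. Here the sub-Riemannian Gauss--Bonnet theorem enters decisively: taking $u = \del_{cc}$ as defining function of $\partial\Omega_t$ and exploiting $\|\hgrad u\|_{\he} = 1$ together with $[X_1,X_3]=[X_2,X_3]=0$, formula \eqref{eq:K0intro} simplifies to
\[
\cK_0 = -(X_3 \del_{cc})^2 + (X_1 \del_{cc})\, X_2(X_3 \del_{cc}) - (X_2 \del_{cc})\, X_1(X_3 \del_{cc}) = B + C
\]
on $\partial\Omega_t$. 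Since each $\partial\Omega_t$ is closed, oriented and characteristic-free, Theorem \ref{HGB} with no boundary components gives $\int_{\partial\Omega_t} (B+C)\, d\sigma_{cc} = 0$ for every small $t$. Differentiating this identity $j$ times in $t$ via the transport recursion produces $\int_{\partial\Omega_t} L^j(B+C)\, d\sigma_{cc} = 0$, and evaluation at $t=0$ closes the argument. The main obstacle is the algebraic closure of the chain $C \to D \to AD-E \to BD \to \cdots$ under $L$, which rests on delicate but purely computational use of Lemma \ref{itdiv} and the subsequent induction identifying $h_k - \tilde h_k$ with $L^{k-2}(B+C)$; a secondary technical point is ensuring $C^2$-regularity of $\del_{cc}$ and the absence of characteristic points on $\partial\Omega_t$ uniformly for $t$ in a neighborhood of $0$, both of which follow from compactness and the $C^2$-smoothness of $\partial\Omega$.
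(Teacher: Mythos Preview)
Your proof is correct and follows essentially the same strategy as the paper: both rely on the transport identity $L(h)=hA+g(h)$ derived from the divergence formula, the algebraic chain $C\to D\to AD-E\to BD\to\cdots$ under $L$ from Lemma~\ref{itdiv}, and the Gauss--Bonnet theorem applied on every level set $\partial\Omega_t$ to kill the contribution of $B+C=\cK_0$. The only difference is bookkeeping: the paper applies Gauss--Bonnet once at $i=3$ to replace $\int(B+2C)$ by $\int C$ for all $s$ and then differentiates the simplified integral directly, whereas you carry both sequences $h_k$ and $\tilde h_k$ in parallel and identify their difference as $L^{k-2}(B+C)$, eliminating it at the end via repeated differentiation of the Gauss--Bonnet identity; these are two phrasings of the same mechanism.
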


\begin{proof}
Let us write $\partial_{f} \Omega_{\ep} := \{ g\in \heis: \del_{cc}(g)=\ep \}$.
The evolution of the non-characteristic set $\bound$ can be explicitly described if
we know the defining function of $\bound$, see \cite{AF07}. In our situation, we
can assume that $\bound = \{ g \in \heis: \del_{cc}(g)=0\}$. In particular, the results
from \cite{AF07} tell us that there exists a map $\Mn:[0,\ep]\times \bound \to \reals^{3}$,
such that $\Mn(\cdot,g):[0,\ep]\to \reals^{3}$ is continuous, $\Mn(\ep,\cdot)\to \partial_{f} \Omega_{\ep}$
is smooth and $\Mn(0,g)= \mbox{id}|_{\bound}(g)$. We claim that there exists a continuous map $a(\ep,g):= \textrm{angle}(T_{\Mn(\ep,g)}\partial_{f} \Omega_{\ep}; H_{\Mn(\ep,g)})$.
Since $\bound$ has no characteristic points, $a(0,g)>0$ for every $g \in \bound$. The continuity of
$a(\cdot,\cdot)$ implies that there exists $\ep_0$, $0<\ep_{0}<\ep$, so that
$a(s,g) >0$ for every $g \in \bound$ and for every $s \in (0,\ep_0)$. In particular this shows that
we can choose a sufficiently small $\ep>0$ so that $\partial_{f} \Omega_{\ep}$ is still a non-characteristic set.

We will use the following proposition from \cite{BFFVW} (see Proposition 3.3) which can be proved with the help of the sub-Riemannian divergence formula.

\begin{proposition}\label{prop-3-3}
Let $h:\Omega_{\ep} \to \reals$ be a $C^1$ function. Then the vector field $h \, \nabla_H\delta_{cc}:\Omega_{\ep} \to \reals^3$ satisfies
$$
\int_{\{ s < \delta_{cc} < t \}} \divh(h \, \nabla_H\delta_{cc}) \, d{\mathcal L}^3 = \int_{\delta_{cc}^{-1}(t)} h \, d{\mathcal H}^3_{cc} - \int_{\delta_{cc}^{-1}(s)} h \, d{\mathcal H}^3_{cc}.
$$
\end{proposition}

We are interested in a Taylor series expansion of the function
$$
\ep \mapsto \leb^{3}(\Omega_{\ep}),
$$
about $\ep = 0$. The analyticity of this function has been already proved in \cite{BFFVW}, therefore
let us denote by $f^{(i)}(\ep)$ the $i^{th}$ derivative of $\ep \mapsto \leb^{3}(\Omega_{\ep})$.
The first three elements of the expansion are obtained as in \cite{BFFVW}. For the other terms,
we need to recall that by Theorem 3.4 of \cite{BFFVW}, for every $i \geq 0$,
\begin{equation}\label{eq:lemma}
f^{(i)}(s)= \int_{\del_{cc}^{-1}(s)}(\divh^{(i-1)}\del_{cc})\, d\mathcal{H}_{cc}^{3}, \quad \textrm{ for every } s \in [0,\ep_0).
\end{equation}
In particular, for $i=3$, $\divh^{(2)}\del_{cc}= B+2C$ and the expression $B+C$ coincides with the horizontal
Gaussian curvature $\cK_0$. Therefore we can apply the Gauss--Bonnet Theorem \ref{HGB} to obtain
$$
f^{(3)}(s)= \int_{\del_{cc}^{-1}(s)}C\, d\mathcal{H}_{cc}^{3}
$$
and hence
$$
f^{(3)}(0)= \int_{\bound}C\, d\mathcal{H}_{cc}^{3}.
$$
We now claim that
\begin{equation}\label{2j2}
f^{(2j+2)}(s) = \int_{\del_{cc}^{-1}(s)} (B^{j-1}D) \, d{\mathcal H}_{cc}^3 \qquad \mbox{for $j \ge 1$}
\end{equation}
and
\begin{equation}\label{2j3}
f^{(2j+3)}(s) = \int_{\del_{cc}^{-1}(s)} (B^{j-1}(AD-E)) \, d{\mathcal H}_{cc}^3 \qquad \mbox{for $j \ge 1$,}
\end{equation}
from which the indicated values of the coefficients in the series expansion of $\ep \mapsto {\mathcal L}^3(\Omega_\ep)$ are obtained by setting $s=0$.

The formulas in \eqref{2j2} and \eqref{2j3} can be obtained inductively by evaluating difference quotient approximations to the indicated derivatives using the inductive hypothesis and the divergence formula in Proposition \ref{prop-3-3}. First,
\begin{equation*}\begin{split}
f^{(4)}(s) &= \lim_{\ep \to 0} \tfrac1\ep \left( f^{(3)}(s+\ep) - f^{(3)}(s) \right) \\
&= \lim_{\ep \to 0} \tfrac1\ep \left( \int_{\del_{cc}^{-1}(s+\ep)} C \, d\mathcal{H}_{cc}^3 - \int_{\del_{cc}^{-1}(s)} C \, d\mathcal{H}_{cc}^3 \right) \\
&= \lim_{\ep \to 0} \tfrac1\ep \int_{\{ s < \del_{cc} < s+\ep \}} \divh(C \, \nabla_H \del_{cc}) \, d{\mathcal L}^3
= \int_{\del_{cc}^{-1}(s)} \divh(C \, \nabla_H \del_{cc}) \, d{\mathcal H}^3_{cc}.
\end{split}\end{equation*}
By \eqref{div-of-h-formula} and Lemma \ref{itdiv}, $\divh(C\,\nabla_H \del_{cc}) = CA + g(C) = AC + (D-AC) = D$ and so \eqref{2j2} holds in the case $j=1$.

Similarly, for $j \ge 2$,
\begin{equation*}\begin{split}
f^{(2j+2)}(s) &= \lim_{\ep \to 0} \tfrac1\ep \left( f^{(2j+1)}(s+\ep) - f^{(2j+1)}(s) \right) \\
&= \lim_{\ep \to 0} \tfrac1\ep \left( \int_{\del_{cc}^{-1}(s+\ep)} (B^{j-2}(AD-E)) \, d\mathcal{H}_{cc}^3 - \int_{\del_{cc}^{-1}(s)} (B^{j-2}(AD-E)) \, d\mathcal{H}_{cc}^3 \right) \\
&= \lim_{\ep \to 0} \tfrac1\ep \int_{\{ s < \del_{cc} < s+\ep \}} \divh(B^{j-2}(AD-E) \, \nabla_H \del_{cc}) \, d{\mathcal L}^3 \\
&= \int_{\del_{cc}^{-1}(s)} \divh(B^{j-2}(AD-E) \, \nabla_H \del_{cc}) \, d{\mathcal H}^3_{cc}.
\end{split}\end{equation*}
By \eqref{div-of-h-formula} and Lemma \ref{itdiv},
\begin{equation*}\begin{split}
\divh(B^{j-2}(AD-E)\,\nabla_H \del_{cc})
&= B^{j-2}(AD-E)A + g(B^{j-2}(AD-E)) \\
&= B^{j-2}(A^2D-AE) + B^{j-2}(Ag(D)+Dg(A)-g(E)) \\
&= B^{j-2}(A^2D - AE - AE + BD + 2CD - A^2D +2AE - 2CD) \\
&= B^{j-1}D
\end{split}\end{equation*}
and so \eqref{2j2} holds in the case $j\ge 2$.

A similar computation establishes \eqref{2j3} for all $j \ge 1$. This completes the proof.
\end{proof}

\section{Questions and remarks}\label{questions}

As is clear from the proof of the Gauss--Bonnet theorem,
it is of crucial interest to prove local summability of the sub-Riemannian
Gaussian curvature $\cK_0$ around isolated characteristic points, with respect to the Heisenberg
perimeter measure.
For the horizontal mean curvature $\cH_{0}$ of $\Sigma$
this is an established result, see \cite{DGN12}.
In the same work \cite{DGN12}, it is showed that the
situation could change dramatically if we address the problem of local integrability
of $\cH_0$ with respect to the Riemannian surface measure,
near the characteristic set.
In this case, it is conjectured that we should have
locally integrability if we are close to an
\textit{isolated characteristic point} of $\Sigma$, and it is presented
a counterexample in the case in which
the characteristic set $\mathrm{char}(\Sigma)$ is 1-dimensional.

\begin{question}
Is the sub-Riemannian Gaussian
curvature $\cK_0$ locally summable with respect to the
Heisenberg perimeter measure, near the characteristic set?
\end{question}

Recalling \eqref{eq:summ},
it is clear that the local summability of $\cK_0$ is
closely related to the integrability of $\cH_0$
near isolated characteristic points with respect to the
Riemannian surface measure. As far as we know, the best
results in this direction are those of \cite{DGN12},
which provide a class of examples where we have local
integrability. In the same spirit of \cite{DGN12},
we have the following result.

\begin{proposition}
Let $\Sigma \subset \heis$ be a Euclidean $C^{2}$-smooth surface.
Suppose $\Sigma$ has cylindrical symmetry near an isolated characteristic point $g$,
then $\cK_0 \in L^{1}(\Sigma, d\mathcal{H}^{3}_{cc})$.
\end{proposition}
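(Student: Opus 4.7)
The plan is to reduce the problem to an explicit one-dimensional integral by exploiting the cylindrical symmetry. First I would observe that left translation is an isometry of $(\heis,d_{cc})$ which preserves the perimeter measure, and that it carries vertical lines to vertical lines. Since $\Sigma$ has cylindrical symmetry about some vertical axis and $g$ is an isolated characteristic point, the rotation-invariance of $C(\Sigma)$ forces $g$ to lie on that axis (the orbit of any other characteristic point under the rotation group would be a full circle, contradicting isolatedness). After left-translating by $g^{-1}$ we may therefore assume $g = 0$ and that the axis of symmetry is the $x_3$-axis. Regularity of $\Sigma$ at the characteristic origin combined with $\hgrad u(0) = 0$ forces $\partial_{x_3} u(0) \ne 0$, and the implicit function theorem represents $\Sigma$ locally as a graph $x_3 = f(r)$ with $r = \sqrt{x_1^2 + x_2^2}$, $f \in C^2$, and $f'(0) = 0$ (the latter coming from the characteristic condition).

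Next I would carry out the explicit computation. Using the defining function $u(x_1,x_2,x_3) = x_3 - f(r)$, a direct calculation yields
\begin{equation*}
l := \|\hgrad u\|_\he = \sqrt{(f'(r))^2 + r^2/4}, \qquad X_3 u = 1,
\end{equation*}
so that $\bar r = 1/l$, and both $\bar p, \bar q$ inherit the rotational symmetry. A short manipulation using $2\,l(r)\,l'(r) = 2 f'(r) f''(r) + r/2$ simplifies \eqref{eq:K0} and eliminates the $\theta$-dependence, giving
\begin{equation*}
\cK_0(r) = -\frac{1}{l(r)^2} + \frac{r\,l'(r)}{2\,l(r)^3}.
\end{equation*}
By Proposition \ref{LSmeas}, in polar coordinates the Heisenberg perimeter measure factors as $d\sigma_\he = r\,l(r)\,dr\,d\theta$, whence
\begin{equation*}
\cK_0\,d\sigma_\he = \left(-\frac{r}{l(r)} + \frac{r^2\,l'(r)}{2\,l(r)^2}\right)dr\,d\theta.
\end{equation*}

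Finally I would estimate each factor near $r = 0$. Since $f \in C^2$ and $f'(0) = 0$, the bound $|f'(r)| \le \|f''\|_{L^\infty}\,r$ gives $r^2/4 \le l(r)^2 \le (\|f''\|_{L^\infty}^2 + 1/4)\,r^2$, so $l(r) \asymp r$ near the origin. Differentiation yields $l'(r) = (f'(r) f''(r) + r/4)/l(r)$, which remains bounded as $r \to 0^+$. Consequently $r/l(r) \le 2$ and $r^2 |l'(r)|/l(r)^2 \le 4\|l'\|_{L^\infty}$ on a neighborhood of the origin, so $\cK_0\,d\sigma_\he$ is dominated there by a constant multiple of $dr\,d\theta$ on $[0,R]\times[0,2\pi]$ and is manifestly integrable. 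Away from the characteristic set $\cK_0$ is continuous on the compact surface $\Sigma$, and the remaining contribution is finite, yielding $\cK_0 \in L^1(\Sigma, d\haus^3_{cc})$. The main subtlety, resolved by the cylindrical symmetry, is the genuine singularity $|\cK_0| \sim r^{-2}$ at the characteristic point: the symmetry forces the perimeter Jacobian $r\,l(r)$ to vanish exactly quadratically in $r$, providing the precise cancellation needed.
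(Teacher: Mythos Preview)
Your proof is correct and follows essentially the same strategy as the paper: reduce to the origin, write $\Sigma$ locally as a rotationally symmetric $x_3$-graph, compute $\cK_0$ explicitly, and observe that the singularity is exactly cancelled by the vanishing of the perimeter density. The only cosmetic difference is your choice of radial variable $r=\sqrt{x_1^2+x_2^2}$ versus the paper's $r=(x_1^2+x_2^2)/4$; your formula $\cK_0=-l^{-2}+\tfrac{r l'}{2l^3}$ is equivalent to the paper's $\cK_0=-\tfrac{2}{(x_1^2+x_2^2)(1+(f')^2)}+\tfrac{f'f''}{(1+(f')^2)^2}$ after the change of variables. You in fact supply more detail than the paper, which simply writes down $\cK_0$ and declares it ``summable'': you make explicit the perimeter density $\rho\,l(\rho)$, the estimate $l(\rho)\asymp\rho$, and the boundedness of $l'$, all of which the paper leaves to the reader. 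One small quibble: the condition $f'(0)=0$ is really forced by the $C^2$ regularity of the rotationally symmetric graph (so that $f(\sqrt{x_1^2+x_2^2})$ is differentiable at the origin) rather than by the characteristic condition, which is automatic at any point on the axis of a rotationally symmetric $x_3$-graph; this does not affect the argument.
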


 \begin{proof}
  Without loss of generality we can assume that the isolated
	characteristic point is the origin $0=(0,0,0)$, and
  that locally around $0$ the surface $\Sigma$ is given by the
	0-level set of the function
  $$u(x_1,x_2,x_3):=x_3 - f\left(\dfrac{x_{1}^{2}+x_{2}^{2}}{4}\right),$$
   where $f\in C^{2}$. For simplicity, let us denote by
	$f(r):=f\left(\tfrac{x_{1}^{2}+x_{2}^{2}}{4}\right).$
	 Then,
  $$X_3 u = 1, \quad X_1 u = -\tfrac{1}{2}(x_2 +x_1 f'(r)) \quad \textrm{and} \quad X_2 u = \tfrac{1}{2}(x_1 - x_2 f'(r)),$$
  therefore $\|\hgrad u\|_{\he} = \tfrac{1}{2} \sqrt{x_{1}^{2}+x_{2}^{2}} \sqrt{1+(f'(r))^{2}}$.
  In order to compute the sub-Riemannian Gaussian curvature $\cK_0$ we need
  \begin{equation*}
   \begin{aligned}
    X_1 \left( \dfrac{1}{\|\hgrad u\|_{\he}}\right)&= 2 \partial_{x_1}\left( (x_{1}^{2}+x_{2}^{2})^{-\tfrac{1}{2}}(1+(f')^{2})^{-\tfrac{1}{2}}\right)\\
    &=- \dfrac{2 x_1 (1+(f')^{2})+f' \, f'' \, x_1 (x_{1}^{2}+x_{2}^{2})}{(x_{1}^{2}+x_{2}^{2})^{3/2}(1+(f')^{2})^{3/2}},
   \end{aligned}  \end{equation*}
 and
 \begin{equation*}
   \begin{aligned}
    X_2 \left( \dfrac{1}{\|\hgrad u\|_{\he}}\right)&= 2 \partial_{x_2}\left( (x_{1}^{2}+x_{2}^{2})^{-\tfrac{1}{2}}(1+(f')^{2})^{-\tfrac{1}{2}}\right)\\
    &=- \dfrac{2 x_2 (1+(f')^{2})+f' \, f'' \, x_2 (x_{1}^{2}+x_{2}^{2})}{(x_{1}^{2}+x_{2}^{2})^{3/2}(1+(f')^{2})^{3/2}}.
   \end{aligned}  \end{equation*}
After some simplifications we get
\begin{equation}\label{cK-formula}
\cK_0 = -\dfrac{2}{(x_{1}^{2}+x_{2}^{2})(1+(f')^{2})} + \dfrac{ f'\, f''}{(1+(f')^{2})^{2}},
\end{equation}
which is summable.
 \end{proof}

A generalization of \eqref{cK-formula} appears in Example \ref{example-x3graph}. \\

It is obvious that the local summability of $\|\hgrad u\|_{\he}^{-1}$
implies the local summability of the sub-Riemannian Gaussian curvature $\cK_0$, but it is not necessary,
as showed by the following example.

\begin{example}
 Let $\Sigma = \{ (x_1,x_2,x_3)\in \heis :u(x_1,x_2,x_3)=0 \},$
for $$u(x_1,x_2,x_3) = x_3 - \dfrac{x_1\, x_2}{2} -\exp\left(-(x_{1}^{2}+x_{2}^{2})^{-2} \right).$$
  The origin $0=(0,0,0)$ is an
isolated characteristic point of $\Sigma$, indeed
 $$X_{1}u = -x_2 -\dfrac{4 x_1 \exp\left(-(x_{1}^{2}+x_{2}^{2})^{-2}\right)}{(x_{1}^{2}+x_{2}^{2})^{3}}, \quad \textrm{and} \quad
 X_{2}u = -\dfrac{4 x_2 \exp\left(-(x_{1}^{2}+x_{2}^{2})^{-2}\right)}{(x_{1}^{2}+x_{2}^{2})^{3}}.$$
 Switching to polar coordinates $x=r \cos(\theta), y=r \sin(\theta),$ we have
 $$\|\hgrad u\|_{\he}^{2} = r^{2} \sin^{2}(\theta) + \dfrac{16 \exp(-2r^{-4})}{r^{10}} + \dfrac{4r^{2}\sin(2\theta)\exp(-r^{-4})}{r^{6}} \leq r^{2}\sin^{2}(\theta) + \exp(-r^{-4}).$$
Therefore we are interested in the summability of the following integral
\begin{equation}\label{eq:inte}
\int_{0}^{\ep} \int_{0}^{2\pi}\dfrac{r}{\sqrt{r^{2}\sin^{2}(\theta)+ \exp(-r^{-4})}} d\theta \, dr .
\end{equation}
Now, setting $g(r):= r^{-1} \exp(-r^{-2}),$ we have
\begin{equation*}
\begin{aligned}
\int_{0}^{\ep} &\int_{0}^{2\pi}\dfrac{r}{\sqrt{r^{2}\sin^{2}(\theta)+ \exp(-r^{-4})}} d\theta \, dr \gtrsim \int_{0}^{\ep}\int_{0}^{2\pi}\dfrac{1}{|\sin(\theta)|+ g(r)}d\theta \, dr\\
&\geq \int_{0}^{\ep}\left( \int_{0}^{\delta} \dfrac{1}{|\sin(\theta)|+g(r)}d\theta\right)dr \approx \int_{0}^{\ep}\left(\int_{0}^{\delta}\dfrac{1}{\theta+g(r)}d\theta\right)dr\\
&=\int_{0}^{\ep}\ln\left(1+\dfrac{\delta}{g(r)}\right)dr,
\end{aligned} \end{equation*}
which is divergent.

Unfortunately this does not provide an example
 of a surface with isolated characteristic points
 whose sub-Riemannian Gaussian curvature $\cK_0$ is \textit{not}
locally integrable with respect to
 the Heisenberg perimeter measure. Indeed,
after quite long computations, one can prove that
in this case the sub-Riemannian Gaussian curvature
$\cK_0$ is actually locally integrable.
\end{example}

Our second question relates to the possible connection existing between
the sub-Riemannian Gaussian curvature $\cK_0$ and one of the integrands appearing
in the localized Steiner's formula proved in \cite{BFFVW}.
In particular, if we consider a Euclidean $C^{2}$-smooth and regular surface $\Sigma \subset \heis$ a defining function $u$ such that $\|\hgrad u\|_{\he}=1$, then we have that
$$\cK_0 = B + C,$$
where $B$ and $C$ are defined as in Section \ref{examples}.

\begin{question}
Is there any explanation for
the discrepancy between the horizontal
Gaussian curvature $\cK_0$ and the fourth integrand appearing in the
Steiner's formula proved in \cite{BFFVW}?
\end{question}

We want to mention that the expression we got for the sub-Riemannian Gaussian curvature
$\cK_0$ appears also in the paper \cite{DGN07} in the study of stability properties of minimal surfaces in $\heis$, and also in the upcoming manuscript \cite{ChMT}.

We end this paper with a Fenchel-type theorem for fully horizontal curves.

\begin{theorem}\label{Fenchel}
Let $\gamma:[a,b]\to \heis$ be a Euclidean
$C^{2}$-smooth, regular, closed and fully horizontal curve.
Then
 \begin{equation}
\int_{\gamma} k_{\gamma}^{0}\, d\cu_{\he} > 2\pi,
 \end{equation}
 where $d\cu_{\he}$ is the standard Heisenberg
length measure of Definition \ref{length}.
\end{theorem}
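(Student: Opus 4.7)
The plan is to reduce the inequality to a classical theorem of Fenchel for planar curves applied to the projection $\pi(\gamma) := (\gamma_1, \gamma_2)$ of $\gamma$ onto the $x_1 x_2$-plane, and then use the horizontality condition together with closedness of $\gamma$ in $\heis$ to rule out the equality case.

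First, I would rewrite the left-hand side as a total absolute curvature in $\reals^2$. For a horizontal curve, the first branch of formula \eqref{eq:k0} gives $k_\gamma^0 = |\dot\gamma_1 \ddot\gamma_2 - \dot\gamma_2 \ddot\gamma_1|/(\dot\gamma_1^2 + \dot\gamma_2^2)^{3/2}$, and by Definition \ref{length} one has $d\dot\gamma_\he = \sqrt{\dot\gamma_1^2+\dot\gamma_2^2}\, dt$. Consequently
\begin{equation*}
\int_\gamma k_\gamma^0 \, d\dot\gamma_\he = \int_a^b \frac{|\dot\gamma_1 \ddot\gamma_2 - \dot\gamma_2 \ddot\gamma_1|}{\dot\gamma_1^2+\dot\gamma_2^2}\, dt,
\end{equation*}
which is precisely the total absolute curvature of the planar $C^2$ curve $\alpha := \pi\circ\gamma$. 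Note that $\alpha$ is regular: horizontality forces $\dot\gamma_3 = \tfrac12(\gamma_1 \dot\gamma_2 - \gamma_2 \dot\gamma_1)$, so if $\dot\gamma_1(t) = \dot\gamma_2(t) = 0$ at some point, then also $\dot\gamma_3(t)=0$, contradicting regularity of $\gamma$. Moreover, $\alpha$ is closed since $\gamma$ is.

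Second, I would invoke Fenchel's theorem for closed planar curves: for every closed regular $C^2$ curve $\alpha$ in $\reals^2$, $\int_\alpha |k_\alpha|\, ds \geq 2\pi$, with equality if and only if $\alpha$ is a convex simple closed curve. This gives the non-strict inequality $\int_\gamma k_\gamma^0\, d\dot\gamma_\he \geq 2\pi$.

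Third, I would exclude the equality case using horizontality. Since $\gamma$ is closed, $\gamma_3(b)=\gamma_3(a)$; the horizontality condition $\omega(\dot\gamma)\equiv 0$ then yields
\begin{equation*}
0 = \gamma_3(b)-\gamma_3(a) = \int_a^b \dot\gamma_3\, dt = \frac{1}{2}\int_a^b \bigl(\gamma_1 \dot\gamma_2 - \gamma_2 \dot\gamma_1\bigr)\, dt,
\end{equation*}
so the signed area enclosed by $\alpha$ vanishes. On the other hand, a convex simple closed planar curve bounds a convex planar region of strictly positive Lebesgue measure, so its signed area is nonzero. Hence $\alpha$ cannot be convex, Fenchel's inequality is strict for $\alpha$, and we conclude $\int_\gamma k_\gamma^0\, d\dot\gamma_\he > 2\pi$.

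The only substantive step beyond routine bookkeeping is the exclusion of the equality case; the key insight is that horizontality trades the Euclidean third coordinate for the signed area of the projection, forcing the latter to vanish and thereby incompatible with the geometric rigidity of Fenchel's equality case. Everything else reduces to the classical planar Fenchel theorem, which may be cited directly.
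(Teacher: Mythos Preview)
Your proof is correct and follows essentially the same route as the paper: project to the plane, identify the integral with the total absolute curvature of the projection, apply the classical Fenchel theorem, and rule out equality by observing that horizontality forces the enclosed signed area of the projection to vanish, hence the projection cannot be convex. Your write-up is in fact slightly more explicit than the paper's in verifying regularity of the projection and in spelling out the signed-area computation.
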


\begin{proof}
Define the projected curve
$$
\tilde{\gamma}(t):= \Pi(\gamma)(t) = (\gamma_1(t), \gamma_2(t), 0)_{\{e_1,e_2,e_3\}}.
$$
Then $\tilde{\gamma}$ is a planar Euclidean $C^{2}$-smooth, regular and closed curve
whose curvature $k$ coincides precisely with the sub-Riemannian curvature $k_{\gamma}^{0}$ of $\gamma$.
Due to the fact that the curve is horizontal,
$$
\int_{\gamma} k_{\gamma}^{0}\, d\cu_{\he} = \int_{\tilde{\gamma}} k \, d\cu_{\reals^{2}},
$$
where $d\cu_{\reals^{2}}$ denotes the standard Euclidean length measure in $\reals^{2}$.

The classical Fenchel Theorem (see \cite{dC76}, Chapter 5.7) assures that
$$
\int_{\tilde{\gamma}} k \, d\cu_{\reals^{2}} \geq 2\pi,
$$
and states that equality is achieved if and only if the curve $\tilde{\gamma}$ is convex. It is a well known fact concerning horizontal curves that the projection of a closed fully horizontal curve $\gamma$ has enclosed oriented area equal to 0.
Therefore its projection $\tilde{\gamma}$ cannot be a convex curve, and the inequality has to be strict. This completes the proof.
\end{proof}

\appendix
\section{Examples}
We want to collect here a list of explicit examples where
we compute the sub-Riemannian Gaussian curvature explicitly.
\begin{example}
Any vertically ruled surface Euclidean $C^{2}$-smooth surface
$\Sigma$ has vanishing sub-Riemannian Gaussian curvature, i.e.
if
$$\Sigma = \{ (x_1, x_2, x_3)\in \heis: f(x_1,x_2)=0\},$$
for $f \in C^{2}(\mathbb{R}^2)$, then $\cK_0 =0$.
In particular, every vertical plane has constant
sub-Riemannian Gaussian curvature $\cK_0 = 0$.
\end{example}

\begin{example}
The horizontal plane through the origin,
$\Sigma = \{ (x_1,x_2,x_3) \in \heis: x_3=0\},$
has
$$\cK_0 = -\dfrac{2}{(x_{1}^{2} + x_{2}^{2})}.$$
\end{example}

\begin{example}
The Kor\'{a}nyi sphere,
$\Sigma = \{ (x_1,x_2,x_3) \in \heis : (x_{1}^2 + x_{2}^2)^2 + 16 x_{3}^2 -1 =0 \},$
has
$$\cK_0 = -\dfrac{2}{(x_{1}^2 + x_{2}^2)} + 6 (x_{1}^2 + x_{2}^2).$$
\end{example}

\begin{example}
Let $\alpha >0$. The paraboloid,
$\Sigma = \{ (x_1, x_2, x_3)\in \heis: x_3 = \alpha (x_{1}^2 + x_{2}^2)\},$
has $$\cK_0 = - \dfrac{1}{1+ 16 \alpha^2} \, \dfrac{2}{(x_{1}^2 + x_{2}^2)}.$$
\end{example}

\begin{example}\label{example-x3graph}
Every surface $\Sigma$ given as a $x_3$-graph, $\Sigma = \{ (x_1,x_2,x_3)\in \heis: x_3 = f(x_1,x_2)\}$, with $f\in C^{2}(\mathbb{R}^2)$, has
$$
\cK_0 = -\dfrac{2}{\|\hgrad u \|_{\mathnormal{H}}^{2}}+
\dfrac{1}{\|\hgrad u \|_{\mathnormal{H}}^{4}} (\mathrm{Hess}f) \left(\hgrad u, J \hgrad u\right),
$$
where $u(x_1, x_2, x_3):= x_3 - f(x_1,x_2)$.
\end{example}

For $x_3$-graphs, we have another useful result that
provides a sufficient condition for the sub-Riemannian
Gaussian curvature $\cK_0$ to vanish.

\begin{lemma}
Let $\Sigma \subset \heis$ be as before. Let $g \in \Sigma$ and suppose that
$\Sigma$ is a Euclidean $C^{2}$-smooth $x_3$-graph and $X_1u$ and $X_2u$ are linearly dependent
in a neighborhood of $g$. Then $\cK_0(g)=0$.
\end{lemma}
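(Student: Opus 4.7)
First I would use that for an $x_3$-graph $u = x_3 - f(x_1,x_2)$ the vertical derivative $X_3u\equiv 1$, while $X_1 u = -f_{x_1}-x_2/2$ and $X_2 u = -f_{x_2}+x_1/2$ are independent of $x_3$. Since $\cK_0(g)$ is defined only at non-characteristic points, at least one of $X_1u(g),X_2u(g)$ is nonzero; after swapping the roles of $X_1$ and $X_2$ if necessary, I may assume $X_1u(g)\neq 0$ and shrink the neighborhood so that $X_1u\neq 0$ throughout. The linear dependence of $X_1u$ and $X_2u$ on this neighborhood then forces the identity $X_2u = \lambda\, X_1u$ for some real constant $\lambda$.

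The central idea is that this constraint forces the horizontal normal $\nu_0 = \hgrad u / \|\hgrad u\|_{\he}$ to point in a \emph{constant} horizontal direction: explicitly, $\nu_0 = \pm(X_1+\lambda X_2)/\sqrt{1+\lambda^2}$, and likewise $J\nu_0 = \pm(\lambda X_1 - X_2)/\sqrt{1+\lambda^2}$ is a constant horizontal vector field on the neighborhood. Writing
$$
\cK_0 = -\cP_0^{2} - \hscal{\hgrad \cP_0}{J\nu_0}
$$
as in Section \ref{C3}, with $\cP_0 = X_3u/\|\hgrad u\|_{\he} = 1/(\sqrt{1+\lambda^2}\,|X_1u|)$, the problem reduces to computing the single directional derivative $(J\nu_0)(\cP_0)$ and showing it equals $-\cP_0^{2}$.

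I would then extract the key differential identity by differentiating $X_2u - \lambda\, X_1u \equiv 0$ with respect to $x_1$, which yields $f_{x_1 x_2} - \lambda f_{x_1 x_1} = 1/2$ on the neighborhood. Substituting $X_1(X_1u) = -f_{x_1 x_1}$ and $X_2(X_1u) = -f_{x_1 x_2}-1/2$ into the combination $\lambda X_1(X_1u) - X_2(X_1u)$ that arises when differentiating $\cP_0 \propto 1/X_1u$ along $J\nu_0$, one obtains $+1$, and a direct computation then gives $\hscal{\hgrad \cP_0}{J\nu_0} = -\cP_0^{2}$, so $\cK_0(g) = 0$. The main obstacle is purely bookkeeping --- the sign conventions for $J$ and the local sign of $X_1u$ --- but no analytic or geometric difficulty arises, since the whole content of the proof is that linear dependence fixes $\nu_0$ as a constant direction and a single algebraic identity (forced by differentiating the dependence relation once) cancels the $-\cP_0^{2}$ term.
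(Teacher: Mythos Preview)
Your argument is correct. The two proofs rest on the same facts but are organized differently. The paper expands $\cK_0$ in the raw form
\[
\cK_0 = \frac{-(X_1u)^2-(X_2u)^2 + X_1u\,X_2u\,X_1X_1u+(X_2u)^2\,X_1X_2u-(X_1u)^2\,X_2X_1u-X_1u\,X_2u\,X_2X_2u}{|\nabla_H u|^4},
\]
then substitutes $X_2u=aX_1u$ together with the commutator-derived identities $X_1X_2u=aX_1X_1u$, $X_2X_1u=aX_1X_1u-1$, $X_2X_2u=a^2X_1X_1u-a$, and checks that the numerator vanishes by brute force. You instead use the intrinsic form $\cK_0=-\cP_0^{2}-\hscal{\hgrad \cP_0}{J\nu_0}$ and the geometric observation that the constraint $X_2u=\lambda X_1u$ with constant $\lambda$ forces $\nu_0$ and $J\nu_0$ to be constant horizontal vector fields; this collapses the computation to a single directional derivative of $1/|X_1u|$, and the one identity $\lambda X_1(X_1u)-X_2(X_1u)=1$ (equivalently $f_{x_1x_2}-\lambda f_{x_1x_1}=\tfrac12$) does all the work. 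Your route is shorter and makes the mechanism transparent, namely that the horizontal normal direction is constant; the paper's route is more mechanical but requires no preliminary geometric interpretation. Both are valid, and the underlying ingredient --- the bracket relation $[X_1,X_2]u=X_3u=1$ --- is the same in each.
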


\begin{proof}
In the case of a $x_3$-graph, $X_3 u =1$ and we have
$$
\cK_0 = -\frac1{|\nabla_0 u|^2} + \frac{X_2 u X_1(\tfrac12|\nabla_0 u|^2) - X_1 u X_2(\tfrac12|\nabla_0 u|^2)}{|\nabla_0 u|^4}.
$$
Assume that $aX_1 u + bX_2 u = $ in a neighborhood of $g$. Let us suppose that $b \ne 0$; the case $a \ne 0$ is similar. Without loss of generality, assume that $b=1$. We expand
\begin{equation}\label{x3graph}
\cK_0 = \frac{-(X_1u)^2-(X_2u)^2 + X_1uX_2uX_1X_1u+(X_2u)^2X_1X_2u-(X_1u)^2X_2X_1u-X_1uX_2uX_2X_2u}{|\nabla_0 u|^4}
\end{equation}
and use the identities $X_2u=aX_1u$,
$$
X_1X_2u=aX_1X_1u,
$$
$$
X_2X_1u=X_1X_2u-X_3u=aX_1X_1u-1
$$
and
$$
X_2X_2u=aX_2X_1u=a-a^2X_1X_1u
$$
to rewrite the numerator of \eqref{x3graph} entirely in terms of $X_1u$ and $X_1X_1u$. A straightforward computation shows that the expression for $\cK_0$ vanishes. The case when $a\ne 0$ is similar.
\end{proof}

\begin{example}
Every surface $\Sigma$ given as a $x_1$-graph,
$\Sigma = \{ (x_1,x_2,x_3)\in \heis: x_1 = f(x_2,x_3) \}$
with $f\in C^{2}(\mathbb{R}^2)$, has
$$\cK_0 = -\dfrac{f_3^2}{\|\hgrad u \|_{\mathnormal{H}}^2}
+ \dfrac{(x_1^2-x_2^2) f_{33}}{8\, \|\hgrad u \|_{\mathnormal{H}}^4}
\left( x_1 \, f_3 + \dfrac{x_1 x_2 \, f_3^2}{2}\right)
-\dfrac{f_{23} (1+ \tfrac{x_2}{2}f_3)}{\|\hgrad u \|_{\mathnormal{H}}^2}
+ \dfrac{1}{\|\hgrad u \|_{\mathnormal{H}}^4}\left( \dfrac{x_1^2 \, f_3^3}{8} + \dfrac{(1+\tfrac{x_2}{2}f_3)}{2}f_3\right),$$
where $u(x_1, x_2, x_3):= x_1 - f(x_2,x_3)$. A similar result
holds for $x_2$-graphs.
\end{example}

\bibliography{GB}
\bibliographystyle{acm}
\nocite{*}
\end{document}